\newtheorem{theorem}{Theorem}[section]
\newtheorem{lemma}[theorem]{Lemma}
\newtheorem{proposition}[theorem]{Proposition}
\newtheorem{corollary}[theorem]{Corollary}
\theoremstyle{remark}
\theoremstyle{definition}
\newtheorem{definition}[theorem]{Definition}
\newtheorem{remark}[theorem]{Remark}
\newcommand{\ev}{\mathrm{ev}}
\newcommand{\mc}{\mathcal}
\newcommand{\mb}{\mathbb}
\newcommand{\nc}{\newcommand}
\nc{\dmo}{\DeclareMathOperator}
\nc{\R}{\mathbb{R}}
\nc{\Z}{\mathbb{Z}}
\nc{\N}{\mathbb{N}}
\nc{\cS}{\mathcal{S}}
\nc{\iso}{\cong}
\dmo{\Diff}{Diff}
\dmo{\Homeo}{Homeo}
\dmo{\dist}{dist}
\dmo\BDiff{BDiff}
\dmo\SO{SO}
\dmo\slide{sl}
\dmo\im{im}
\dmo\id{id}
\dmo\Fix{Fix}
\dmo\Out{Out}
\dmo{\T}{\mathcal{T}}
\dmo{\Te}{\mathcal{T}^{\epsilon}}
\dmo{\Me}{\mathcal{M}^{\epsilon}}
\begin{document}

\title[Central limit theorems for counting measures]{Central limit theorems for counting measures in coarse negative curvature}

\author[I. Gekhtman]{Ilya Gekhtman}
\address{Department of Mathematics\\ 
University of Toronto\\ 
40 St George St\\ 
Toronto, ON, Canada\\}
\email{\href{mailto:ilyagekh@gmail.com}{ilyagekh@gmail.com}}

\author[S.J. Taylor]{Samuel J. Taylor}
\address{Department of Mathematics\\ 
Temple University\\ 
1805 North Broad Street\ 
Philadelphia, PA 19122, U.S.A\\}
\email{\href{mailto:samuel.taylor@temple.edu}{samuel.taylor@temple.edu}}

\author[G. Tiozzo]{Giulio Tiozzo}
\address{Department of Mathematics\\ 
University of Toronto\\ 
40 St George St\\ 
Toronto, ON, Canada\\}
\email{\href{mailto:tiozzo@math.toronto.edu}{tiozzo@math.toronto.edu}}

\date{\today}

\begin{abstract}
We establish central limit theorems for an action of a group $G$ on a hyperbolic space $X$
with respect to the counting measure on a Cayley graph of $G$.
Our techniques allow us to remove the usual assumptions of properness and smoothness of the space, or cocompactness of the action.
We provide several applications which require our general framework, including to lengths of 
geodesics in geometrically finite manifolds and to intersection numbers with submanifolds.
\end{abstract}

\maketitle

\section{Introduction} \label{sec:intro}

The goal of this paper is to provide a novel approach to the central limit theorem on groups acting on hyperbolic spaces, for sampling 
with respect to the word length in the group. We shall replace the traditional approach based on thermodynamic formalism with techniques coming from the theory of random walks on groups. This allows us to establish new applications, including central limit theorems for lengths of geodesics in geometrically finite hyperbolic manifolds, for intersection numbers with submanifolds, and for homomorphisms between hyperbolic groups.

\subsection*{Motivation}

The distribution of lengths of closed orbits for smooth flows on manifolds has long been a topic of considerable interest.  
For instance, Sinai \cite{Sinai-CLT} and then Ratner \cite{Ratner} proved a central limit theorem (CLT) for the geodesic flow
on a hyperbolic manifold (see also Lalley \cite{Lalley}). 
One prominent technique, pioneered by Sinai \cite{Sinai-Gibbs}, Bowen \cite{Bowen}, Ruelle \cite{Ruelle}, Parry--Pollicott \cite{PP}, and others, uses Markov partitions to reduce the study of smooth flows to symbolic dynamics to which one can apply tools from thermodynamic formalism. This approach has been successful in a variety of settings, especially applied to Anosov flows and their generalizations.

\smallskip
More recently, there has been a renewed interest in statistical properties of geodesic length and other geometric quantities with respect to a different sampling, namely according to the \emph{counting measure}, i.e. uniform measure on spheres in a Cayley graph of a finitely generated group $G$. For instance, Pollicott--Sharp \cite{PS} considered the ratio between the word length and the geometric length, while CLTs have been established for quasimorphisms on free groups by Horsham-Sharp \cite{HS}
and on general hyperbolic groups by Calegari--Fujiwara \cite{CF} and Bj\"orklund--Hartnick \cite{BH}.

In \cite{GTT3} the authors, building on \cite{GTT}, settled a conjecture of Chas--Li--Maskit \cite{CLM} about the distribution of hyperbolic lengths of closed geodesics on compact surfaces when sampling with respect to word length. 
Further, a CLT and statistical laws have been established for cocompact, proper actions of hyperbolic groups on CAT$(-1)$ spaces by Cantrell \cite{Can}.

All of these results are based on a symbolic coding and thermodynamic formalism. Although these  techniques are quite powerful, they necessarily impose strong constraints on the actions of interest, usually requiring that the space $X$ is CAT$(-1)$ and that the action $G \curvearrowright X$ is proper cocompact. While this is the case in the classical setting, they are not satisfied for most actions on Gromov hyperbolic spaces. 

\smallskip
The goal of this paper is to provide a new approach to the central limit theorem on groups $G \curvearrowright X$ acting on hyperbolic spaces, which will allow us to consider in particular: 

\begin{enumerate}

\item groups $G$ which are not necessarily word hyperbolic; 

\item actions on spaces $(X, d)$ which are $\delta$-hyperbolic, but not necessarily CAT$(-1)$ or proper; 

\item group actions $G \curvearrowright X$ which need not  be convex cocompact or even proper;

\item observables $\phi : G \to \mathbb{R}$ which are not necessarily H\"older continuous, and are not quasimorphisms. 

\end{enumerate}

For the sake of concreteness, we will now present a version of our main theorem (Theorem \ref{T:main}) from which we will then derive several applications. Our discussion here will be a special case of the most general theorems (Theorems \ref{T:main-almostss}, \ref{T:CLT-tau}) which we will state and prove in Section \ref{sec:almost_semi}. 

\subsection*{Main results}
Let $G$ be a finitely generated group acting by isometries on a $\delta$-hyperbolic metric space $(X, d)$, and fix a finite generating set $S$. 
We require that the action is \emph{nonelementary} in the sense that there are two independent loxodromic elements. 

Let $S_n := \{ g \in G \ : \ \Vert g \Vert = n \}$ be the sphere of radius $n$ for the word metric with respect to $S$. We denote as $\mc N_\sigma$ the Gaussian measure $d \mc N_\sigma(t) = \frac{1}{\sqrt{2 \pi} \sigma} e^{-t^2/2\sigma^2}\ dt$ if $\sigma > 0$, and the Dirac mass at $0$ if $\sigma = 0$.  We require that $G$ admits a \emph{thick bicombing} for $S$ and we refer the reader to Section \ref{S:graph-structure} for definitions. We note here that these general conditions are satisfied in a variety of settings; for example, see the applications below and Lemma \ref{L:bicombing}.

\begin{theorem} \label{T:main}
Let $G$ be a group which admits a thick bicombing for the generating set $S$. 
Let $G \curvearrowright X$ be a nonelementary action by isometries on a $\delta$-hyperbolic space $(X, d)$, 
and let $o \in X$ be a base point. 

\begin{enumerate}
\item \textup{(CLT for displacement)}
Then there exists $\ell > 0$, $\sigma \geq 0$ such that for any $a < b$ we have
$$\lim_{n \to \infty} \frac{1}{\# S_n} \# \left\{ g \in S_n \ : \   \frac{d(o, g o) - n \ell}{\sqrt{n}}  \in [a, b] \right\} =  \int_a^b d \mc N_\sigma(t).$$

\item \textup{(CLT for translation length)} 
Moreover, if $\tau(g)$ denotes the translation length of $g$ on $X$, we also have for any $a < b$
$$\lim_{n \to \infty} \frac{1}{\# S_n} \# \left\{ g \in S_n \ : \   \frac{\tau(g) - n \ell}{\sqrt{n}}  \in [a, b] \right\} =  \int_a^b d \mc N_\sigma(t).$$

\item
Further, $\sigma = 0$ if and only if there exists a constant $C$ such that 
$$|d(o, go) - \ell \Vert g \Vert | \leq C$$
for any $g \in G$.
\end{enumerate}
\end{theorem}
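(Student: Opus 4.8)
The plan is to prove the statement by establishing a central limit theorem for a suitable random walk on $G$ and then transferring it to the counting measure via the thick bicombing, following the strategy suggested by the introduction. The key idea is that the displacement $g \mapsto d(o, go)$ behaves, along typical geodesics in the Cayley graph, like a Birkhoff sum of a function with good mixing properties. Concretely, I would first use the thick bicombing to build a measure on geodesic rays (or on the boundary) of the Cayley graph that is equivalent in the appropriate sense to the limit of the uniform measures on $S_n$, and show that the sequence $d(o, g_n o)$, where $(g_n)$ is the sequence of prefixes of such a ray, satisfies a CLT with some variance $\sigma^2 \geq 0$ and some drift $\ell > 0$ (this is the content of the general Theorems \ref{T:main-almostss} and \ref{T:CLT-tau} which I am assuming). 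Part (3) is then a statement about \emph{when} the limiting Gaussian is degenerate, i.e. $\sigma = 0$.

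For the degeneracy criterion itself, the direction $(\Leftarrow)$ is immediate: if $|d(o,go) - \ell\|g\|| \leq C$ for all $g$, then for $g \in S_n$ the quantity $\frac{d(o,go) - n\ell}{\sqrt n}$ lies in $[-C/\sqrt n, C/\sqrt n]$, which shrinks to $\{0\}$, so the pushforward measures converge weakly to the Dirac mass at $0$; by uniqueness of the limit in part (1), $\sigma = 0$. The substantive direction is $(\Rightarrow)$: assuming $\sigma = 0$, produce the constant $C$. Here I would argue that $\sigma = 0$ forces the "coboundary" condition that the fluctuations of $d(o, g_n o) - n\ell$ are bounded along typical trajectories, and then promote "bounded along a set of full measure" to "bounded on all of $G$." The first half uses the variance formula coming from the CLT machinery: the asymptotic variance of a Birkhoff-type sum vanishes exactly when the summand is a coboundary (plus constant) in the appropriate cohomological sense, which here translates to the potential $\psi(g) := d(o,go) - \ell\|g\|$ being bounded on a conull set of the boundary — equivalently, bounded on a "large" (e.g. exponentially generic) subset of $G$.

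The second half — passing from a bounded-on-a-generic-set statement to a bounded-on-all-of-$G$ statement — is where I expect the main obstacle to lie, and it should be handled using hyperbolicity together with the thick bicombing. The point is that $\psi$ is itself "coarsely additive" up to controlled error along bicombing geodesics: if $g = g_1 g_2 \cdots g_n$ reads off a bicombing path, then $d(o, go)$ is within a bounded additive error of $\sum_i (\text{increment})$ because bicombing paths map to quasigeodesics in $X$ (this is built into the definition of a thick bicombing and used throughout the paper), so fellow-traveling and the Morse lemma in $X$ control the defect. Thus if the increments have bounded partial sums along a generic set of infinite paths, one deduces a uniform bound on $\psi$ along \emph{all} finite bicombing paths, hence (again up to the quasigeodesic error in $X$) a uniform bound $|d(o,go) - \ell\|g\|| \leq C$ for every $g \in G$. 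Making the "generic set is rich enough to see every group element up to bounded error" step precise — i.e. that every $g$ is a uniformly-close prefix of some trajectory in the conull set, using the nonelementarity of the action and the thickness of the bicombing to guarantee enough independent extensions — is the technical heart of the argument; everything else is soft, following from weak convergence and the Gromov product estimates standard in $\delta$-hyperbolic geometry.
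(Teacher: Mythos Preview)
Your reduction of parts (1) and (2) to the general Theorems \ref{T:main-almostss} and \ref{T:CLT-tau}, and your handling of the easy direction of (3), match the paper's proof and are fine.

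The hard direction of (3) has a genuine gap. Your argument relies on the claim that ``bicombing paths map to quasigeodesics in $X$ (this is built into the definition of a thick bicombing and used throughout the paper).'' This is false, and is in fact precisely what the paper is designed \emph{not} to assume: the action $G \curvearrowright X$ is allowed to be nonproper (see the introduction and the remark after Proposition~\ref{P:zero-sigma_gen}), so the orbit map need not be a quasi-isometric embedding and word-geodesics in $G$ need not become quasigeodesics in $X$. Consequently the function $g \mapsto d(o,go)$ is \emph{not} coarsely additive along bicombing paths in any useful sense, and your mechanism for passing from ``bounded on a generic set'' to ``bounded everywhere'' via fellow-travelling and the Morse lemma in $X$ does not get off the ground.

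The paper circumvents this by never working with displacement directly at the cocycle level. Instead it uses the Busemann cocycle $\eta(g,\xi) = \beta_\xi(o, g^{-1}o) - \ell N(g)$ on $F_v \times \overline{X}^h_\infty$, which is a genuine (centerable) cocycle regardless of how bad the orbit map is. Then $\sigma = 0$ forces, via the Benoist--Quint argument, that the centered cocycle $\eta_0$ vanishes identically on $\Gamma_v^{-1}$, so $\eta$ itself is uniformly bounded there; a lemma of Horbez converts this into a uniform bound on $|d(o,go) - \ell\Vert g\Vert|$ for $g$ in the loop semigroup. Only at this final stage does one promote to all of $G$, and the mechanism is not ``every $g$ is a prefix of a generic trajectory'' but rather the defining equation of thickness, $G = B\cdot \overline{\Gamma}_v \cdot B$: every $g$ differs from some loop-semigroup element by left and right multiplication by elements of a fixed finite set, and then the triangle inequality in $X$ together with the norm-control Lemma~\ref{L:norm-control} (which uses biautomaticity, not any geometry of $X$) yields the uniform bound. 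The Busemann/horofunction step is the missing idea in your proposal.
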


We remark that as a consequence of $(3)$, if $\sigma=0$ then the translation length of any $g\in G$ with respect to its action on $X$ is a constant multiple of its translation length in the word metric. Moreover, if the action $G\curvearrowright  X$ is not proper, then $\sigma > 0$.

\smallskip

Another way to formulate (1) is to say that we have the convergence in distribution
\[
\frac{d(o, g o) - n \ell}{\sqrt{n}} \longrightarrow \mc N_\sigma, 
\]
hence from now on we will use the above notation as a shorthand.

\subsection*{Applications}\label{sec:apps}
There are a number of applications to the above theorems and we summarize a few of them here. For the proofs, see Section \ref{sec:Apps}.

\subsubsection*{Geometrically finite hyperbolic manifolds}
First, let us state an extension of our previous work on surfaces \cite{GTT3} to general hyperbolic manifolds, possibly with cusps.
If $M = \mb H^n / \Gamma$ is a hyperbolic manifold and $\gamma \in \Gamma = \pi_1(M)$, then we set $\ell(\gamma)$ to be the length of the geodesic freely homotopic to $\gamma$ unless $\gamma$ is peripheral (i.e. homotopic into a cusp), in which case we set $\ell(\gamma)=0$.

\begin{theorem} \label{th:length}
Suppose that $M$ is a geometrically finite hyperbolic manifold
 and let $S'$ be any generating set for $\pi_1 (M)$. Then
there is a finite generating set $S \supset S'$ and $\ell,\sigma>0$ such that
\[
\frac{\ell(\gamma)- n \ell}{\sqrt{n}} \longrightarrow  \mc N_\sigma, 
\]
where $\gamma$ is chosen uniformly at random in the sphere of radius $n$ 
with respect to $S$.

If moreover $\pi_1(M)$ is word hyperbolic, then we can take $S = S'$.
\end{theorem}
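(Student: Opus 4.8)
The plan is to deduce Theorem~\ref{th:length} from Theorem~\ref{T:main} applied to the action of $\Gamma = \pi_1(M)$ on an appropriate $\delta$-hyperbolic space. The natural candidate is $X = \mathbb{H}^n$ itself, with $\Gamma$ acting by deck transformations; since $M$ is geometrically finite, this action is nonelementary (the limit set is infinite), and $X$ is $\delta$-hyperbolic (indeed CAT$(-1)$). Fixing a basepoint $o \in \mathbb{H}^n$, the displacement $d(o,\gamma o)$ is comparable to $\ell(\gamma)$ for non-peripheral $\gamma$, and part~(2) of Theorem~\ref{T:main} delivers a CLT for the translation length $\tau(\gamma)$, which for a loxodromic isometry of $\mathbb{H}^n$ equals the length of the closed geodesic in the free homotopy class, i.e. exactly $\ell(\gamma)$. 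So the CLT for $\tau$ on $X$ is essentially the CLT for $\ell(\gamma)$ — provided we are careful about the peripheral (parabolic) elements, for which $\tau = 0$ while $\ell$ is defined to be $0$ as well; one must check these form a negligible (in fact exponentially small) proportion of each sphere $S_n$, which follows from nonelementarity plus the thick bicombing hypothesis controlling the geometry of spheres.

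The remaining ingredient is that $\pi_1(M)$ admits a thick bicombing for a suitable generating set $S \supseteq S'$. This is where the geometric finiteness is used essentially: a geometrically finite group is relatively hyperbolic with respect to its cusp subgroups (which are virtually abelian), and such groups are known to admit, after possibly enlarging the generating set, combings with the required thickness/fellow-traveling properties. I expect the paper to invoke exactly this via Lemma~\ref{L:bicombing} (or a cited structural result on relatively hyperbolic groups), so the step reduces to verifying the hypotheses of that lemma for the peripheral structure of $M$. When $\pi_1(M)$ is already word hyperbolic — equivalently, $M$ has no cusps, or the cusps are handled so that the group is hyperbolic — a Cayley graph for any finite generating set is already well-adapted, so no enlargement is needed and we may take $S = S'$.

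Putting it together, the steps in order are: (i) observe $\Gamma \curvearrowright \mathbb{H}^n$ is a nonelementary isometric action on a $\delta$-hyperbolic space; (ii) produce a finite generating set $S \supseteq S'$ for which $\Gamma$ has a thick bicombing, using geometric finiteness / relative hyperbolicity (or no enlargement in the word-hyperbolic case); (iii) apply Theorem~\ref{T:main}(2) to get $\frac{\tau(\gamma) - n\ell}{\sqrt{n}} \to \mc N_\sigma$ along uniformly random $\gamma \in S_n$; (iv) identify $\tau(\gamma)$ with $\ell(\gamma)$ on the non-peripheral elements and show the peripheral elements are a vanishing fraction of $S_n$, so the CLT transfers to $\ell(\gamma)$; (v) confirm $\sigma > 0$ using Theorem~\ref{T:main}(3): since $\mathbb{H}^n/\Gamma$ has cusps the action is non-proper — or, even without cusps, the word length and $\ell$ are not within bounded distance because of the exponential distortion of geodesics, so the degenerate case $\sigma = 0$ is excluded.

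The main obstacle is step~(ii): establishing the thick bicombing for the (relatively hyperbolic) group $\pi_1(M)$ and pinning down exactly how much the generating set must be enlarged. The CLT machinery of Theorem~\ref{T:main} is then a black box, and the translation-length-versus-geodesic-length identification in step~(iv) is routine hyperbolic geometry; but verifying that geometrically finite groups fall within the scope of Lemma~\ref{L:bicombing} — handling the parabolic subgroups and the choice of peripheral generators — is the substantive point, and the reason the hypotheses of this paper (rather than the classical proper cocompact setting) are needed.
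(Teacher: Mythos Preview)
Your overall strategy matches the paper's: $\pi_1(M)$ is relatively hyperbolic with virtually abelian peripherals, so Lemma~\ref{L:bicombing}(2) provides the enlarged generating set $S \supseteq S'$ with a thick bicombing (and Lemma~\ref{L:bicombing}(1) handles the word-hyperbolic case with $S=S'$); then Theorem~\ref{T:main}(2) gives the CLT for $\tau(\gamma)$, and $\tau(\gamma)=\ell(\gamma)$ for every $\gamma$ (loxodromics give the closed geodesic length, parabolics give $0=0$), so step~(iv) is even simpler than you suggest---no negligibility argument is needed.

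The genuine gap is in step~(v). The action of a Kleinian group on $\mathbb{H}^n$ is always proper (it is properly discontinuous), cusps or not; what cusps spoil is cocompactness, not properness. So the nonproperness clause after Theorem~\ref{T:main}(3) does not apply, and your fallback ``exponential distortion of geodesics'' has no content in the convex cocompact case, where the orbit map is a quasi-isometry. You must rule out $\sigma=0$ directly from the criterion in Theorem~\ref{T:main}(3) (equivalently Corollary~\ref{C:zero-sigma}): if $\sigma=0$ then $\tau_X(h)=\ell\cdot\tau_G(h)$ for all $h$, and since word-metric translation lengths in a hyperbolic (or relatively hyperbolic) group are rational with bounded denominator, the length spectrum of $M$ would lie in a lattice $\tfrac{\ell}{N}\mathbb{Z}$. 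The paper excludes this by citing the non-arithmeticity of the length spectrum of hyperbolic manifolds (Guivarc'h--Raugi \cite{GR}, Kim \cite{Kim}). That is the missing ingredient you need for $\sigma>0$.
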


The statement includes the cases where $M$ is either finite volume or convex cocompact, and is new even when  
$M$ is a finite area surface.
We remark that when $M$ is either convex cocompact or a surface, the above theorem works for \emph{any} generating set $S$. 
In the convex cocompact case, the needed action $\pi_1(M) \curvearrowright \mathbb{H}^n$ is sufficiently tame so that the techniques of thermodynamics may be applicable (\cite{PS}, \cite{Can}). However, this is not the case when the manifold $M$ has cusps.

We note that Theorem \ref{th:length} further extends to manifolds of variable negative curvature, as long as the peripheral subgroups are virtually abelian, and the same proof applies.

\subsubsection*{Geometrically infinite 3-manifolds}

In the case of $3$-manifolds, the previous result can be strengthened further as follows. 

\begin{theorem}\label{th:3d}
Let $M$ be a hyperbolic $3$--manifold such that $\pi_1(M)$ is finitely generated and not virtually abelian. Suppose further that $M$ does not have any rank $2$ cusps. Then for \emph{any} finite generating set $S$ of $\pi_1(M)$, there are $\ell,\sigma>0$ such that
\[
\frac{\ell(\gamma)- n \ell}{\sqrt{n}} \longrightarrow  \mc N_\sigma, 
\]
where $\gamma$ is chosen uniformly at random in the sphere of radius $n$ with respect to $S$.

Moreover, if $M$ has rank $2$ cusps, the same statement holds after enlarging the generating set as in Theorem \ref{th:length}.
\end{theorem}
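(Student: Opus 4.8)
The plan is to derive this from Theorem \ref{th:length} (equivalently, from the general Theorem \ref{T:main}) by producing, for an arbitrary finite generating set $S$, a nonelementary action of $\pi_1(M)$ on a $\delta$-hyperbolic space together with a thick bicombing for $S$, so that the relevant length function $\ell(\gamma)$ agrees up to bounded error with a displacement function for that action. The case distinction mirrors the cusp structure: the only obstruction to working with an arbitrary $S$ in Theorem \ref{th:length} is the presence of rank $2$ cusps, whose peripheral subgroups are $\Z^2$ and hence fail to be quasiconvex (equivalently, undistorted) in $\pi_1(M)$ for a general generating set. So when $M$ has no rank $2$ cusps, all peripheral subgroups are either trivial, $\Z$, or (for variable curvature) virtually cyclic, and one expects the action of $\pi_1(M)$ on $\mb H^3$ — or on the coned-off space obtained by electrifying the cusps — to be well adapted to \emph{every} word metric.

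First I would recall the structure of a finitely generated Kleinian group $\Gamma = \pi_1(M)$ with $M = \mb H^3/\Gamma$ that is not virtually abelian and has no rank $2$ cusps. By tameness (Agol, Calegari--Gabai) and the covering/core results, $\Gamma$ is finitely presented, the thick part of $M$ is coarsely the whole of $M$ away from a finite number of rank $1$ cusps, and $\Gamma$ is hyperbolic relative to its (virtually cyclic) peripheral subgroups; when there are no cusps at all, $\Gamma$ is word hyperbolic. In the relatively hyperbolic case, rank $1$ peripheral subgroups are two-ended, hence automatically undistorted, so the electrified space $\widehat{\mb H^3}$ obtained by coning off horoballs is $\delta$-hyperbolic and the orbit map $(\Gamma, d_S) \to \widehat{\mb H^3}$ is a quasi-isometric embedding \emph{for every} finite generating set $S$. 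The action $\Gamma \curvearrowright \widehat{\mb H^3}$ is nonelementary (two independent loxodromics), since $\Gamma$ is not virtually abelian. Next I would invoke Lemma \ref{L:bicombing} to produce a thick bicombing for $S$: for hyperbolic groups this is standard (geodesic or automatic structures are thick bicombings), and for relatively hyperbolic groups with two-ended peripherals one uses that such groups are biautomatic / admit the required combable structure. Then Theorem \ref{T:main}(1) applies to $\Gamma \curvearrowright \widehat{\mb H^3}$, giving a CLT for $d_{\widehat{\mb H^3}}(o, \gamma o)$, and the comparison between this electrified displacement and $\ell(\gamma)$ — the geodesic length in $M$, set to $0$ on peripherals — is exactly the estimate carried out in the proof of Theorem \ref{th:length}: $|\ell(\gamma) - \tau_{\widehat{\mb H^3}}(\gamma)|$ is bounded, and translation length CLT (Theorem \ref{T:main}(2)) transfers to $\ell$. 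Positivity of $\sigma$ follows from part (3) of Theorem \ref{T:main}: since the action is not proper (horoballs are collapsed, or equivalently rank $1$ cusp subgroups are distorted in $\mb H^3$ itself), $\sigma > 0$; and $\ell > 0$ since $\Gamma$ has a loxodromic element, forcing linear growth of displacement along it.

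For the final clause, when $M$ does have rank $2$ cusps, the $\Z^2$ peripheral subgroups are genuinely distorted for a generic $S$, and there is no hyperbolic model space on which the orbit map is a quasi-isometric embedding. Here I would instead follow Theorem \ref{th:length} verbatim: enlarge $S'$ to a generating set $S$ adapted to the cusps (adding the finitely many generators of each peripheral $\Z^2$, or more precisely a generating set making the peripheral subgroups undistorted so that $\Gamma$ becomes hyperbolic relative to them with the induced metric), apply the relatively hyperbolic machinery and Lemma \ref{L:bicombing} to get a thick bicombing for this $S$, and conclude as above. The main obstacle in the argument is the first case: verifying that for an \emph{arbitrary} finite generating set $S$ one still has (a) a thick bicombing for $S$ — this is where the hypothesis "no rank $2$ cusps" is essential, as it keeps all peripheral subgroups two-ended and hence the group biautomatic with respect to any generating set — and (b) that the orbit map to the electrified space is a quasi-isometric embedding uniformly, so that the length comparison $|\ell(\gamma) - \tau_{\widehat{\mb H^3}}(\gamma)| \le C$ holds with $C$ independent of $\gamma$. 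Once these two points are in place, Theorem \ref{T:main} does the rest.
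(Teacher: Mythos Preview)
Your proposal contains a genuine gap: the electrification detour is both unnecessary and incorrect. The quantity $\ell(\gamma)$ is the translation length of $\gamma$ for the \emph{original} action $\Gamma \curvearrowright \mathbb{H}^3$, not for any coned-off space. Once you electrify the horoballs, a loxodromic whose axis spends a long time deep in a cusp neighbourhood has large $\ell(\gamma)$ but small $\tau_{\widehat{\mathbb{H}^3}}(\gamma)$, so the bound $|\ell(\gamma)-\tau_{\widehat{\mathbb{H}^3}}(\gamma)|\le C$ fails. Relatedly, your claim that the orbit map $(\Gamma,d_S)\to\widehat{\mathbb{H}^3}$ is a quasi-isometric embedding is false whenever there are rank~$1$ cusps, since the peripheral $\mathbb{Z}$'s are unbounded in $(\Gamma,d_S)$ but bounded in the coned space; and even if it held, it would not yield the additive length comparison you want. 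Finally, the assertion that $\Gamma$ is relatively hyperbolic with respect to its parabolic subgroups is not automatic for geometrically infinite $M$: relative hyperbolicity with respect to the cusp groups characterises geometric finiteness, so you cannot read it off from the original hyperbolic structure in the cases of interest here.

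The paper's route avoids all of this by first proving that $\pi_1(M)$ is \emph{word hyperbolic} outright. By the Tameness Theorem, $M$ is the interior of a compact $\overline{M}$; the hypothesis ``no rank~$2$ cusps'' means $\partial\overline{M}$ has no torus components, and then Thurston's Hyperbolization Theorem gives a convex cocompact hyperbolic structure on the interior of $\overline{M}$, so $\pi_1(M)$ is hyperbolic. Now Lemma~\ref{L:bicombing}(1) supplies a thick bicombing for \emph{any} generating set $S$, and one applies Theorem~\ref{T:main} directly to the original (possibly geometrically infinite) action on $\mathbb{H}^3$, for which $\tau(\gamma)=\ell(\gamma)$ on the nose. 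The ``moreover'' clause with rank~$2$ cusps is then literally Theorem~\ref{th:length}. You were circling the right idea---two-ended peripherals should make everything work for arbitrary $S$---but the clean mechanism is that hyperbolicity relative to virtually cyclic subgroups already forces word hyperbolicity, and the paper obtains this via an auxiliary convex cocompact structure rather than via the original geometry.
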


To the authors' knowledge, this is the first CLT for lengths of closed geodesics for possibly geometrically infinite $3$--manifolds.

\subsubsection*{Intersection numbers with a submanifold}
For our next application, the required actions are on locally infinite trees, which are nonproper hyperbolic spaces.

Let $M$ be a smooth orientable manifold and $\Sigma$ a smooth orientable codimension$-1$ submanifold which is $\pi_1$-injective on each component. 
We say $\Sigma$ is \emph{fiber-like} 
if for each boundary component of the cut manifold $M |\Sigma$ its induced subgroup in the fundamental group of the corresponding component of $M|\Sigma$ has index at most $2$.

For $\gamma \in \pi_1 (M) $, let $i(\gamma, \Sigma)$ denote the minimal intersection number of $\Sigma$ with loops in $M$ freely homotopic to $\gamma$. 

\begin{theorem} \label{th:intersection}
Suppose that $M$ is a closed orientable hyperbolic manifold and let $S$ be any generating set for $\pi_1 (M)$. Let $\Sigma$ be a smooth orientable codimension$-1$ submanifold that is $\pi_1$-injective but not fiber-like.
Then there are $\ell,\sigma >0$ such that 
\[
\frac{i(\gamma, \Sigma) -  \ell n}{\sqrt{n}} \longrightarrow \mc N_\sigma,
\]
where $\gamma$ is chosen uniformly at random in the sphere of radius $n$ with respect to $S$. 
\end{theorem}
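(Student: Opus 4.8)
The plan is to realize the intersection number $i(\gamma,\Sigma)$ as (a multiple of) the displacement function of a suitable nonelementary action on a hyperbolic space, and then apply Theorem \ref{T:main}(2), since $i(\gamma,\Sigma)$ depends only on the conjugacy class of $\gamma$ and so behaves like a translation length. First I would pass from $\Sigma$ to the Bass--Serre tree $T$ associated to the splitting of $\pi_1(M)$ induced by $\Sigma$: cutting $M$ along $\Sigma$ expresses $\pi_1(M)$ as the fundamental group of a graph of groups whose edge groups are the $\pi_1$-injective components of $\Sigma$ and whose vertex groups are the $\pi_1(M|\Sigma)$. The group $G = \pi_1(M)$ then acts on $T$ by isometries, and the standard fact from Bass--Serre theory is that for hyperbolic (i.e. non-edge-fixing) $\gamma$, the translation length $\tau_T(\gamma)$ on $T$ equals the minimal number of times a loop representing $\gamma$ crosses $\Sigma$, i.e. $\tau_T(\gamma) = i(\gamma,\Sigma)$; for elliptic $\gamma$ both sides are $0$. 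Since $M$ is closed and orientable, $T$ is a simplicial tree, hence $0$-hyperbolic (in particular $\delta$-hyperbolic), though typically locally infinite and nonproper — precisely the generality Theorem \ref{T:main} was designed to handle.

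The key point to verify is that the action $G \curvearrowright T$ is nonelementary, i.e. contains two independent loxodromics; this is exactly where the hypothesis that $\Sigma$ is \emph{not} fiber-like enters. If the action on $T$ had a bounded orbit, or fixed an end, or preserved a line, one would get a strong restriction on the splitting: a global fixed point forces $\Sigma$ to be trivial on $\pi_1$, a fixed end or invariant line forces the splitting to be "linear" (an ascending HNN or an amalgam with small index data), and tracing through the definitions this corresponds to the boundary components of $M|\Sigma$ carrying index-$\le 2$ subgroups — that is, $\Sigma$ being fiber-like. So the contrapositive of "fiber-like" should be precisely "the $G$-action on $T$ is nonelementary." I would isolate this as a lemma: \emph{if $\Sigma$ is $\pi_1$-injective but not fiber-like, then $G\curvearrowright T$ has two independent loxodromic isometries.} One also needs to know $G$ admits a thick bicombing for the given generating set $S$; since $\pi_1(M)$ of a closed hyperbolic manifold is a (word-)hyperbolic group, this follows from Lemma \ref{L:bicombing}, which guarantees thick bicombings for hyperbolic groups with respect to any generating set.

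With these two inputs in hand, Theorem \ref{T:main}(2) applied to $G \curvearrowright T$ and any base point $o\in T$ yields constants $\ell \ge 0$ and $\sigma\ge 0$ with
\[
\frac{\tau_T(\gamma) - n\ell}{\sqrt n} \longrightarrow \mathcal N_\sigma,
\]
and substituting $\tau_T(\gamma) = i(\gamma,\Sigma)$ gives the stated convergence. It remains to rule out degeneracy, i.e. to show $\ell>0$ and $\sigma>0$. For $\sigma>0$: by Theorem \ref{T:main}(3), $\sigma = 0$ would force $|d(o,\gamma o) - \ell\|\gamma\||\le C$ for all $\gamma$; but the action on $T$ is nonproper (infinitely many group elements fix $o$, as the vertex groups are infinite), and the remark following Theorem \ref{T:main} states that nonproperness of the action implies $\sigma>0$, so this case is immediate. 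For $\ell>0$: the drift $\ell$ is the linear rate of escape of $d(o,\gamma o)$, and this is positive for any nonelementary action of a nonamenable group — concretely, because a random-walk-type argument (or the fact that the action has unbounded orbits together with the thick bicombing) produces loxodromics at a definite linear rate in word length; alternatively $\ell=0$ together with nonelementariness is incompatible, so $\ell>0$. I expect the main obstacle to be the nonelementariness lemma: carefully matching the three types of elementary actions on the tree $T$ with the fiber-like condition on $\Sigma$ requires a bit of care with the Bass--Serre structure (especially the role of the index-$\le 2$ clause, which is what allows a reflection-type element and hence an invariant line rather than a true fixed point), whereas the identification $\tau_T = i(\cdot,\Sigma)$ and the invocation of Theorem \ref{T:main} are standard.
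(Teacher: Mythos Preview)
Your proposal is correct and follows essentially the same route as the paper: pass to the Bass--Serre tree $T$ of the splitting induced by $\Sigma$, identify $i(\gamma,\Sigma)$ with the translation length $\tau_T(\gamma)$, and apply Theorem \ref{T:main} using that $\pi_1(M)$ is word hyperbolic and hence admits a thick bicombing for any $S$. The only cosmetic differences are that the paper argues nonelementariness more directly (not fiber-like $\Rightarrow$ $T$ is not a line, and cocompactness then forces the action to be nonelementary) and deduces $\sigma>0$ by observing that a proper action on a tree would make $\pi_1(M)$ virtually free, whereas you use that vertex stabilizers are infinite; both are equivalent to your reasoning.
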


The theorem is new even for surfaces; in that context, Chas--Lalley \cite{CL} proved a CLT for self-intersection numbers of curves with respect to word length. Following Theorem \ref{th:length}, a similar result could be formulated for more general hyperbolic manifolds.

\subsubsection*{Homomorphisms between hyperbolic groups} Our next application is to homomorphisms between hyperbolic groups. Interestingly, the condition for nonzero variance can be recast in terms of the induced Patterson--Sullivan measures.

\begin{theorem}\label{th:homo-PS}
Suppose that $\phi \colon G \to G'$ is a homomorphism between hyperbolic groups such that the image of $\phi$ is not virtually cyclic. For any fixed generating sets $S$ and $S'$ of $G$ and $G'$, respectively, there are $\ell >0$ and $\sigma \ge0$ such that  
\[
\frac{\Vert \phi(g) \Vert_{S'}-  \ell \Vert g \Vert_{S}}{\sqrt{\Vert g\Vert_S }} \longrightarrow \mc N_\sigma,
\]
for $g\in G$ chosen uniformly at random in the sphere of radius $n$ with respect to $S$. 

Moreover, $\sigma = 0$ (i.e. the Gaussian is degenerate) if and only if $\phi$ has finite kernel and the induced homeomorphism $\partial \phi \colon \partial G \to \partial G'$ pushes the Patterson--Sullivan measure class for $(G,S)$ to the Patterson--Sullivan measure class for $(\phi(G), S')$.
\end{theorem}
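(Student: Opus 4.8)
The plan is to deduce everything from Theorem \ref{T:main} applied to a suitable action of $G$ on a hyperbolic space. Given the homomorphism $\phi \colon G \to G'$, let $X$ be the Cayley graph of $G'$ with respect to $S'$; this is a $\delta$-hyperbolic space since $G'$ is hyperbolic, and $G$ acts on $X$ by isometries through $\phi$ (that is, $g \cdot x := \phi(g) x$). Since $\Im(\phi)$ is not virtually cyclic, it contains two independent loxodromic elements of $G'$, so their $\phi$-preimages witness that the action $G \curvearrowright X$ is nonelementary. Taking the base point $o = e \in G' = X$, we have $d(o, g o) = \Vert \phi(g) \Vert_{S'}$ exactly, so part (1) of Theorem \ref{T:main}, together with the fact that $G$ (being hyperbolic) admits a thick bicombing for $S$ (Lemma \ref{L:bicombing}), gives the stated convergence in distribution with $\ell > 0$; positivity of $\ell$ follows because the image is unbounded (being non-virtually-cyclic), so displacement grows linearly on average. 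This also handles part of the degeneracy statement: by part (3) of Theorem \ref{T:main}, $\sigma = 0$ if and only if $\bigl| \Vert \phi(g) \Vert_{S'} - \ell \Vert g \Vert_S \bigr| \le C$ for all $g$, i.e. $\phi$ is a quasi-isometric embedding after rescaling.

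The remaining work is to translate the condition ``$\sigma = 0$'' into the Patterson--Sullivan formulation. First, the bounded-distortion condition $\bigl| \Vert \phi(g) \Vert_{S'} - \ell \Vert g \Vert_S \bigr| \le C$ immediately forces $\phi$ to have finite kernel: an infinite kernel would contain infinite-order elements $g$ with $\Vert g^n \Vert_S \to \infty$ but $\phi(g^n) = e$, contradicting the inequality. Conversely, if $\phi$ has finite kernel then $\phi$ is a quasi-isometric embedding onto the (quasiconvex, since finitely generated and the ambient group is hyperbolic — here one should cite the fact that infinite normal... no: one uses that $\Im(\phi) \cong G/\ker\phi$ is hyperbolic and the inclusion $\Im(\phi) \hookrightarrow G'$ need \emph{not} be a quasi-isometric embedding, so this is exactly the subtle point) — so I would instead argue directly in terms of boundary maps and conformal densities. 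When $\phi$ has finite kernel, $\phi$ induces a homeomorphism $\partial\phi \colon \partial G \to \partial\Im(\phi)$, and the remaining issue is whether the word-length distortion between $G$ and $\Im(\phi) \le G'$ (both with their respective generating sets) is bounded. The key dynamical fact, which I would borrow from the boundary theory already available (the identification of the drift $\ell$ with a ratio of critical exponents, and the characterization of bounded displacement via absolute continuity of Patterson--Sullivan classes, in the spirit of the equality case of the fundamental inequality), is that $\bigl| d_{S'}(o, \phi(g)o) - \ell\, d_S(e,g)\bigr|$ is bounded precisely when the Hausdorff dimension of $\partial\Im(\phi) \subset \partial G'$ equals $\ell$ times the Hausdorff dimension of $\partial G$, which in turn is equivalent to $\partial\phi$ pushing the Patterson--Sullivan measure class of $(G,S)$ to that of $(\Im(\phi), S')$.

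More concretely, I would set up the argument as follows. Let $\mu$ be a Patterson--Sullivan measure for $(G,S)$ on $\partial G$ and $\nu$ one for $(\Im(\phi), S')$ on $\partial\Im(\phi)$; the pushforward $(\partial\phi)_*\mu$ is a measure on $\partial\Im(\phi)$, and one computes its Radon--Nikodym cocycle with respect to the $\Im(\phi)$-action using the quasiconformal transformation rule, obtaining a cocycle of the form $e^{-D\, \beta(\cdot,\cdot)}$ where $\beta$ is a Busemann-type cocycle on $X$ pulled back via $\phi$. The two measure classes agree if and only if this cocycle matches that of $\nu$ up to a bounded multiplicative error, which by the standard ergodic-theoretic dichotomy (Patterson--Sullivan classes are either equivalent or mutually singular, and equivalence is detected by equality of the relevant linear functionals on the space of cocycles) happens if and only if the linear drift of $d(o, g o)$ along $(G,S)$-typical geodesics equals $\ell \cdot (\text{drift of } \Vert g\Vert_S) = \ell$ with zero fluctuation — which is exactly $\sigma = 0$. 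The main obstacle I anticipate is this last equivalence: rigorously relating the ``zero variance'' condition from the CLT to singularity-versus-equivalence of conformal densities requires the machinery connecting the central limit variance to the Manhattan-curve / pressure-function second derivative and its vanishing, and then invoking a rigidity statement (a proportionality/rigidity theorem for the two length functions) to convert vanishing second derivative into the geometric statement that $\partial\phi$ is ``conformal'' for the two Patterson--Sullivan classes. If the paper has already proven a general ``$\sigma = 0$ iff proportional displacement'' statement (which part (3) of Theorem \ref{T:main} does), then the only genuinely new input here is the equivalence ``proportional displacement $\iff$ equivalence of Patterson--Sullivan classes,'' which is where I would concentrate the argument, most likely by citing or adapting known rigidity results on comparison of metrics on hyperbolic groups and their boundaries.
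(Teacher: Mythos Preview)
Your reduction of the CLT statement to Theorem \ref{T:main} via the action of $G$ on the Cayley graph of $G'$ is exactly what the paper does, and your derivation of ``finite kernel'' from the bounded-distortion condition in part (3) is also the same (a minor remark: you do not need infinite-order elements in $\ker\phi$; any infinite subset of $G$ already contains elements of arbitrarily large $S$-length).

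Where your proposal diverges from the paper is in the final equivalence ``$\bigl|\Vert\phi(g)\Vert_{S'}-\ell\Vert g\Vert_S\bigr|\le C$ $\iff$ Patterson--Sullivan classes match.'' You sketch an approach through Radon--Nikodym cocycles, Manhattan curves, pressure second derivatives, and an ergodic dichotomy, and you correctly flag this as the main obstacle. The paper bypasses all of this by a clean two-step citation of Furman \cite{Furman}: first pass to $\overline{G}:=G/\ker\phi$ with $\overline{S}:=\pi(S)$, so that the Cayley graph of $\overline{G}$ carries two metrics $d_1(g,h)=\Vert h^{-1}g\Vert_{\overline{S}}$ and $d_2(g,h)=\Vert\overline{\phi}(h^{-1}g)\Vert_{S'}$ with $|d_1-\ell d_2|\le C$; then \cite[Theorem 2]{Furman} says this bound is equivalent to equality of the Bowen--Margulis measures on $\partial\overline{G}\times\partial\overline{G}$, and \cite[Proposition 1]{Furman} says that in turn is equivalent to equality of the Patterson--Sullivan classes. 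Transporting back to $\partial G$ is immediate since $\pi$ is a quasi-isometry when $\ker\phi$ is finite. Your route via pressure rigidity could be made to work, but it is substantially heavier machinery to reprove what Furman already packages; the factorization through $\overline{G}$ is also worth adopting, since it places both metrics on the same hyperbolic group and sidesteps the issue you noticed about $\Im(\phi)\hookrightarrow G'$ not being a quasi-isometric embedding.
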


The above result generalizes \cite [Theorem 1.6]{Can}, who proved a CLT where $\phi$ is the abelianization homomorphism, 
which in turn generalizes work of Rivin \cite{Rivin} for free groups. This is also a generalization of Calegari-Fujiwara \cite[Corollary 4.27]{CF}. 

\subsubsection*{Hyperplanes crossed in right-angled Artin and Coxeter groups} 
Our final application is to a collection of groups that is not necessarily relatively hyperbolic. 

Suppose that $G$ is a right-angled Artin group or right-angled Coxeter group that is not a direct product. Let $V$ be its set of vertex generators. For each $v \in V$, define a function $\#_v \colon G \to \mathbb{Z}$ that counts the number of occurrences of $v^{\pm 1}$ in a shortest spelling of $g \in G$ with respect to $V$. Equivalently, $\#_v(g)$ is the number of hyperplanes labeled by $v$ separating $o$ and $go$ in the cube complex associated to $G$.

\begin{theorem} \label{th:raag}
For $G$ as above, there are $\ell,\sigma > 0$ such that for any vertex $v$,
\[
\frac{\#_v(g) -  \ell n}{\sqrt{n}} \longrightarrow \mc N_\sigma,
\]
where $g$ is chosen uniformly at random in the sphere of radius $n$ with respect to the vertex generators.
\end{theorem}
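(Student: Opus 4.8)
The plan is to recognize $\#_v$ as the displacement function of an action on a tree and then invoke Theorem \ref{T:main}. Let $\widetilde X$ be the CAT$(0)$ cube complex associated with $G$ (the universal cover of the Salvetti complex when $G$ is a right-angled Artin group, the Davis complex when $G$ is a right-angled Coxeter group), let $\widetilde o$ be a base vertex, and let $\mathcal H_v$ be the $G$--invariant family of hyperplanes dual to the edges labelled $v$. Since no square of $\widetilde X$ has two opposite $v$--edges, the hyperplanes in $\mathcal H_v$ are pairwise non-crossing, so collapsing each connected component of $\widetilde X \setminus \bigcup \mathcal H_v$ to a point produces a simplicial tree $T_v$ carrying an isometric $G$--action. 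Writing $o$ for the image of $\widetilde o$ in $T_v$, the number of hyperplanes of $\mathcal H_v$ separating $\widetilde o$ from $g\widetilde o$ --- which by the description in the statement is exactly $\#_v(g)$ --- equals $d_{T_v}(o, go)$. Note that $T_v$ is typically locally infinite, so the action $G \curvearrowright T_v$ need not be proper, and the general framework is essential; because $\#_v(g)$ equals $d_{T_v}(o,go)$ on the nose, plain Theorem \ref{T:main} (rather than its almost-additive refinements) already applies.

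It remains to check the two hypotheses of Theorem \ref{T:main} for $G \curvearrowright T_v$ with the vertex generating set $V$. First, $(G, V)$ admits a thick bicombing; this is an instance of Lemma \ref{L:bicombing}, via the well-understood combinatorics of geodesics in $\widetilde X$ (regularity of the geodesic language and the commutation-shuffle moves relating geodesics with the same endpoints). Second, $G \curvearrowright T_v$ is nonelementary: $T_v$ is the Bass--Serre tree of the splitting of $G$ dual to $\mathcal H_v$, whose edge stabilizers are conjugates of the special subgroup generated by $\mathrm{lk}(v)$ and whose vertex stabilizers include conjugates of the special subgroup generated by $\Gamma \setminus \{v\}$. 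Since $G$ is not a direct product, $\Gamma$ is not a join, so $v$ is not central and $\mathrm{lk}(v) \subsetneq \Gamma \setminus \{v\}$; hence the edge group has infinite index in a vertex group, $T_v$ is neither a point nor a line, and the action has two independent loxodromic isometries. (One excludes at the outset the degenerate virtually cyclic cases $G \cong \Z$ and $G \cong D_\infty$, where $T_v$ collapses to a line.)

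Granting this, Theorem \ref{T:main}(1) applied to $G \curvearrowright T_v$ yields $\ell > 0$ and $\sigma \ge 0$ with
\[
\frac{d_{T_v}(o, go) - n\ell}{\sqrt n} \longrightarrow \mc N_\sigma
\]
for $g$ uniform in $S_n$, and since $d_{T_v}(o, go) = \#_v(g)$ this is the assertion of the theorem once $\sigma > 0$ is established. Suppose $\sigma = 0$; by Theorem \ref{T:main}(3) there is a constant $C$ with $|\#_v(g) - \ell\Vert g\Vert_V| \le C$ for every $g \in G$, which forces $\#_v$ to grow along every direction with one and the same density $\ell$. This fails: since $\Gamma$ is not a join there is a vertex $w \ne v$ not adjacent to $v$, and the powers of $w$ (or of $w_1 w_2$ for two non-adjacent vertices distinct from $v$, in the Coxeter case) have $\Vert \cdot \Vert_V \to \infty$ while $\#_v \equiv 0$ along them; in the residual configurations one instead compares two periodic families, one spelled with a single $v$ per period and one whose period runs through a clique of $\Gamma$, and these carry distinct $v$--densities. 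Either way $|\#_v - \ell\Vert \cdot \Vert_V|$ is unbounded, contradicting $\sigma = 0$. Running this for each vertex $v$ proves the theorem.

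I expect the real work to sit in the first two paragraphs: constructing the dual tree $T_v$ correctly, and --- above all --- verifying nonelementarity, i.e. the Bass--Serre analysis of the splitting dual to $\mathcal H_v$ together with the bookkeeping that pins down exactly which graphs $\Gamma$ make $T_v$ degenerate to a line. The thick bicombing is a routine consequence of right-angled geodesic combinatorics through Lemma \ref{L:bicombing}, and positivity of the variance follows quickly from Theorem \ref{T:main}(3) once an element (or a pair of elements) with the "wrong" $v$--density is produced.
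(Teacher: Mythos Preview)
Your approach is essentially the paper's: interpret $\#_v$ as displacement for the action on the Bass--Serre tree dual to the $v$--hyperplanes, verify nonelementarity, and invoke the main CLT. There is one point of divergence worth flagging. You apply Theorem~\ref{T:main}, which requires a thick \emph{bicombing}, and cite Lemma~\ref{L:bicombing} for it; however, Lemma~\ref{L:bicombing}(3) only supplies a thick \emph{combing} (it is silent on biautomaticity). The paper instead exploits the finer structural information in that lemma---a single aperiodic maximal component---and applies Theorem~\ref{T:main-ss} through Remark~\ref{R:one-comp}, which drops the biautomaticity hypothesis entirely. Your route can be repaired, since RAAGs and RACGs are genuinely biautomatic (e.g.\ via Niblo--Reeves for groups acting on CAT(0) cube complexes), but that is an external input rather than Lemma~\ref{L:bicombing}. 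For $\sigma > 0$, the paper (by analogy with the proof of Theorem~\ref{th:intersection}) argues that $\sigma = 0$ would force the tree action to be proper and hence $G$ virtually free; your explicit construction of elements with zero $v$--density is an equally valid and more hands-on alternative.
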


\smallskip

We conclude by noting that our methods are sufficiently general to apply beyond the case of `nonpositively curved' groups. 
Moreover, we do \emph{not} need to assume that our counting measures are associated to geodesic combings.
See Theorems \ref{T:main-almostss}, \ref{T:CLT-tau} for the most general result. 
For example, by using the standard graph structure associated to the language of geodesics for a free group, we obtain a CLT for nonbacktracking random walks on \emph{any} group with a nonelementary action on a hyperbolic space $X$.

\subsection*{From thermodynamics to random walks}

Most central limit theorems for counting measures established so far use a coding for geodesics with finite paths, and 
then apply classical results in thermodynamic formalism, like the existence and uniqueness of Gibbs measures for shifts of finite type. 
There, the observable is assumed to be H\"older continuous with respect to the standard metric on the shift space. 

In this paper, instead, we \emph{do not} assume any good geometric property on the action.
Let us recall that displacement is not a quasimorphism, is in general not \emph{weakly combable} (in the language of \cite{CF}) if the action is not convex cocompact, and it is not a \emph{H\"older weight function} in the sense of \cite[Proposition 1]{PS} if $X$ is not CAT$(-1)$. 
Thus, the observable need not be H\"older and the thermodynamic approach does not appear to work.  
We also do not use transfer operators or $\zeta$-functions (as in e.g. \cite{PP}, \cite{Can}).

\smallskip
Rather, our general strategy is as follows.

\begin{enumerate}

\item
We start with a \emph{graph structure}, i.e. a graph whose paths parameterize the group elements we want to count. 
We first consider a vertex $v$ of this graph, and consider a random walk on the semigroup $\Gamma_v$
of loops based at this vertex. Here, we apply the CLT for cocycles for groups acting on hyperbolic spaces, as devised by Benoist-Quint \cite{BQ-hyperbolic} and generalized by Horbez \cite{Horbez} to actions on nonproper spaces.

\item
Then, we consider the set of paths in a maximal component for the graph as a suspension on the space of loops at $v$, 
and we apply results of Melbourne-T\"or\"ok \cite{MT} to ``lift" the CLT to the suspended transformation. 
To be precise, we need to consider a skew product over the shift space. 

\item
Now, we note that a thick graph structure is \emph{almost semisimple}, hence there exists a power $p$ for which the transition matrix $M^p$ is 
semisimple. We use this to prove that the counting measure starting at an initial vertex converges to a convex combination 
of stationary measures for the Markov chains on the maximal components. 

\item
Using biautomaticity, we show that all the CLTs for all Markov chains have the same mean and variance. This implies 
a CLT for the counting measure on the set of paths starting at any vertex in a semisimple structure.

\item 
Finally, for a general thick structure of period $p$ we condition on the first prefix of length $r$; since all these distributions 
for the conditional measures converge to the same law (by (4) above), the CLT for the entire sequence holds.

\end{enumerate}

\subsection*{Acknowledgments}

Gekhtman is partially supported by NSERC. 
Taylor is partially supported by NSF grant DMS-1744551 and the Sloan Foundation.
Tiozzo is partially supported by NSERC and the Sloan Foundation.

\section{Background}

\subsection{Graph structures for countable groups} \label{S:graph-structure}

Given a countable group $G$, we define a \emph{graph structure} on $G$ as a triple $(\Gamma, v_0, \ev)$, where 
$\Gamma$ is finite, directed graph, $v_0$ is a vertex of $\Gamma$ which we call its \emph{initial vertex}, and $\ev \colon E(\Gamma) \to G$ is a map that labels the edges of $\Gamma$ with group elements. 
Given this data, we extend the map $\ev$ by defining for each finite path $g = g_1 \dots g_n$ the group element 
$\ev(g) = \ev(g_1) \dots \ev(g_n)$. To simplify notation, we will use 
$\overline{g} = \ev(g)$ to denote the group 
element associated to the path $g$. We denote as $\Vert g \Vert$ the length of the path $g$. Throughout the paper, we assume that the graph structure is \emph{proper} in the sense that for each group element there are at most finitely many paths in the graph that evaluate to it.

For a graph structure $\Gamma$, we define $\Omega$ to be the set of all infinite paths starting at any vertex of $\Gamma$ and $\sigma \colon \Omega \to \Omega$ to be
the shift map. Given a path $\omega = (g_1, \dots, g_n, \dots)$, we denote as $w_n := g_1 \dots g_n$ its prefix of length $n$.

We define two vertices $v_i, v_j$ to be \emph{equivalent} if there is a path from $v_i$ to $v_j$ and a path from $v_j$ to $v_i$, 
and the \emph{components} of $\Gamma$ as the equivalence classes for this relation. 

We will denote by $M$ the transition matrix for $\Gamma$. By Perron-Frobenius, $M$ has a real eigenvalue of largest modulus, 
which we will denote by $\lambda$. Moreover, such a matrix is \emph{almost semisimple} if for any eigenvalue of maximal modulus, its geometric and algebraic multiplicity agree. Furthermore, such a matrix is \emph{semisimple} if its only eigenvalue of maximal modulus is real positive. 
We call a graph structure \emph{(almost) semisimple} if its associated transition matrix is. 

Let $\Gamma$ be almost semisimple, and let $\lambda$ be the leading eigenvalue of $M$. Then we define a vertex $v$ to be of \emph{large growth} if 
$$\lim_{n \to \infty} \frac{1}{n} \log \# \{ \textup{paths of length }n\textup{ starting at }v \} = \lambda$$
and of \emph{small growth} otherwise (in which case, the limit above is $< \lambda$). 
Furthermore, a component $C$ is \emph{maximal} if 
$$\lim_{n \to \infty} \frac{1}{n} \log \# \{ \textup{paths of length }n\textup{ inside }C \} = \lambda.$$

As discussed in \cite{GTT2}, the global structure of $\Gamma$ is as follows: there is no path between maximal components and vertices of large growth are precisely the ones which have a path to a maximal component.

Given a vertex $v$, we denote as $\Gamma_v$ the \emph{loop semigroup} of $v$, i.e. the set of all finite paths from $v$ to itself. 
This is a semigroup under concatenation, and all its elements lie entirely in the component of $v$. We denote as $\overline{\Gamma}_v$ 
the image of $\Gamma_v$ in $G$ under the evaluation map.

\begin{definition}[Thick graph structure]
A graph structure $\Gamma$ is \emph{thick} if for any vertex $v$ in a maximal component, there exists a finite set $B \subseteq G$ such that 
$$G = B \cdot  \overline{\Gamma}_v \cdot B$$
where the equality is \emph{in the group} $G$.
\end{definition}

In what follows, we often make the evaluation map implicit in our notation. 
In particular, if $G$ acts on a metric space $(X, d)$, $o \in X$ is a base point, and $g$ is a finite path in $\Gamma$, we will often write $go$ to mean the point $\overline{g}o \in X$.

The next lemma summarizes some properties of thick graph structures that we will need in the sequel.

\begin{lemma}\label{lem:thick_implies}
A thick graph structure $\Gamma$ is almost semisimple. Moreover, if $G \curvearrowright X$ is a nonelementary action on a hyperbolic space, then the actions of both semigroups $\Gamma_v$ and $\Gamma_v^{-1}$ on $X$ are also nonelementary, for each vertex $v$ contained in a maximal component. 
\end{lemma}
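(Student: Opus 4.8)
The plan is to prove the two assertions of Lemma~\ref{lem:thick_implies} separately, the first being essentially bookkeeping about Perron--Frobenius theory and the second being the substantive point.

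For the claim that a thick structure is almost semisimple, I would argue as follows. Let $\lambda$ be the leading eigenvalue of the transition matrix $M$. By the thickness hypothesis, for a vertex $v$ in a maximal component we have $G = B \cdot \overline{\Gamma}_v \cdot B$ for a finite set $B$. Since the structure is proper, the number of paths of length $n$ that pass through $v$ grows at rate $\lambda$ (it must grow at rate $\lambda$ since every group element is boundedly close, via $B$, to an element represented by a loop at $v$, and the whole graph has growth $\lambda$). Hence $v$ lies in a maximal component, and in fact \emph{every} maximal component is recurrent with its own Perron eigenvalue equal to $\lambda$. The only obstruction to almost semisimplicity is a nontrivial Jordan block for an eigenvalue of modulus $\lambda$; such a block would force the number of paths of length $n$ to grow like $n^k \lambda^n$ for some $k \geq 1$. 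But the global structure of $\Gamma$ recalled in the excerpt (no path between distinct maximal components, large-growth vertices are exactly those with a path to a maximal component) means that any vertex sits over a single maximal component — there is no ``chaining'' of maximal components that would produce polynomial factors. Concretely, one decomposes $M$ into blocks along the condensation DAG and observes that because no two maximal (period-$\lambda$) classes are comparable, the restriction of $M$ to the span of the maximal classes is block diagonal with each block primitive-up-to-period, so its eigenvalues of modulus $\lambda$ are semisimple; and the transient part and small-growth part contribute only eigenvalues of modulus $<\lambda$. This gives almost semisimplicity. I expect this part to be routine given the cited structural results from \cite{GTT2}.

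For the second assertion, fix a vertex $v$ in a maximal component and let $H := \overline{\Gamma}_v \subseteq G$ be the image of the loop semigroup; I must show $H \curvearrowright X$ is nonelementary, i.e.\ contains two independent loxodromics (the argument for $\Gamma_v^{-1}$, whose image is $H^{-1}$, is symmetric, noting that $g$ is loxodromic iff $g^{-1}$ is, and independence is preserved). The starting point is that $G \curvearrowright X$ is nonelementary, so there exist independent loxodromics $a, b \in G$, and by thickness $G = B \cdot H \cdot B$ for a finite $B$. The first step is to produce one loxodromic in $H$: since $G$ is covered by finitely many double cosets $B h B$ with $h \in H$, and $G$ contains loxodromics with arbitrarily large translation length (e.g.\ powers $a^n$), pigeonhole gives a fixed pair $b_1, b_2 \in B$ and infinitely many $h \in H$ with $b_1 h b_2$ loxodromic of growing translation length; a standard argument (the translation length of $h$ differs from that of $b_1 h b_2$ by at most $2(|b_1 o| + |b_2 o|)$ up to the hyperbolicity constant, via the fact that $d(o,ho) \geq d(o, b_1 h b_2 o) - 2\max_{b\in B} d(o,bo)$ together with a comparison of $\tau$ and displacement in $\delta$-hyperbolic spaces) then forces $h$ itself to be loxodromic for large enough translation length. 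So $H$ contains a loxodromic $f$, with attracting/repelling fixed points $f^\pm \in \partial X$.

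The real work is getting a \emph{second} loxodromic in $H$ independent from $f$. Here is the strategy: suppose for contradiction that every loxodromic in $H$ is coaxial with $f$, i.e.\ fixes the pair $\{f^+, f^-\}$. Since $H$ is a semigroup, it is closed under multiplication, and one checks that then the limit set of $H$ in $\partial X$ is contained in $\{f^+, f^-\}$ (if some $h \in H$ moved $f^+$ off $\{f^+,f^-\}$ then $h f h^{-1}$-type ping-pong with $f$ — using that $H$ contains $f$ and enough of its conjugates-by-$H$-elements — would manufacture a loxodromic with a different axis; more carefully, one uses that the image $\overline{\Gamma}_v$ generates, together with $B$ on both sides, all of $G$). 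Then the fixed-point set $\{f^+,f^-\}$ would be invariant under a finite-index-controlled portion of $G$ — but combining with $G = B H B$, one deduces $G$ itself coarsely fixes $\{f^+, f^-\}$, contradicting nonelementarity of $G \curvearrowright X$. Thus $H$ has two independent loxodromics. The subtlety I expect to be the main obstacle is precisely this last contradiction: a semigroup, unlike a group, need not contain inverses, so I cannot freely conjugate, and I have to be careful that the ping-pong / fixed-point-transfer arguments only use multiplication within $H$ and the double-coset decomposition $G = BHB$. The cleanest route is probably: take independent loxodromics $a,b$ of $G$; write $a^n = \beta_1 h_n \beta_2$ and $b^n = \beta_1' h_n' \beta_2'$ with $\beta_i, \beta_i' \in B$ fixed along subsequences; argue $h_n, h_n'$ are eventually loxodromic with fixed points converging (as $n \to \infty$) to those of $a, b$ respectively, hence for large $n$ the elements $h_n$ and $h_n'$ are themselves independent loxodromics lying in $H$. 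This avoids the contradiction argument entirely and uses only north--south dynamics plus the coarse comparison between $d(o, \cdot o)$ and $\tau(\cdot)$ in hyperbolic spaces.
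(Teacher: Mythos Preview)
Your argument for the second assertion (nonelementarity of $\Gamma_v$ and $\Gamma_v^{-1}$) is sound and more detailed than the paper's, which simply cites \cite[Proposition 6.3]{GTT2}. The strategy of writing $a^n = \beta_1 h_n \beta_2$ and $b^n = \beta_1' h_n' \beta_2'$ with fixed $\beta$'s along subsequences, then using north--south dynamics to show $h_n, h_n'$ are eventually independent loxodromics in $H$, is correct and is essentially how such results are established.

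Your argument for the first assertion, however, is circular. You invoke the structural fact ``no path between distinct maximal components'' from the excerpt to conclude that there is no chaining of $\lambda$-components and hence no Jordan block. But in the paper that structural fact is stated only \emph{after} assuming $\Gamma$ is almost semisimple; and in any case ``no two irreducible components with Perron eigenvalue $\lambda$ are comparable in the condensation DAG'' is precisely equivalent to the absence of a nontrivial Jordan block at modulus $\lambda$. So you are assuming the conclusion. Thickness must enter substantively here, and in your sketch it does not --- your opening paragraph about thickness only re-derives that $v$ is in a maximal component, which is already part of the hypothesis.

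The paper's route is different and short: if $M$ had a Jordan block of size $k\ge 2$ at an eigenvalue of modulus $\lambda$, then the number of closed paths of length $n$ in $\Gamma$ would grow at least like $c\,n^{k-1}\lambda^n$. Thickness (together with properness) forces the count of closed paths in $\Gamma$ to be controlled by the count of loops at the fixed vertex $v$; but loops at $v$ live entirely inside a single irreducible component, whose closed-path count is $O(\lambda^n)$. This contradiction rules out the Jordan block. The missing ingredient in your sketch is exactly this use of thickness to push the global count into a single component.
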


\begin{proof}
If the transition matrix $M$ of $\Gamma$ is not almost semisimple, then $M$ has a Jordan block for an eigenvalue of modulus $\lambda$ of size $k\ge 2$ (see \cite[Section 2]{GTT}). In particular, the growth of closed paths in $\Gamma$ is at least a constant times $n^{k-1}\lambda^n$. However, this is impossible if $\Gamma$ is thick because in this case the growth of closed paths is no more than the growth of closed paths in $\Gamma_v$ for $v$ in a maximal component. This, in turn, is bounded by a constant times $\lambda^n$ since it is no more than the growth of closed paths in its maximal component.

The statement that the action of $\Gamma_v$ (and hence $\Gamma_v^{-1}$) on $X$ is nonelementary is proven in \cite[Proposition 6.3]{GTT2}.
\end{proof}

\subsubsection*{Bicombings}

For particular applications, it is also useful to define the notion of a geodesic graph structure. A graph structure $\Gamma$ is \emph{geodesic} if 
the length $\Vert g \Vert$ of any path $g$ is equal the word length of $\overline{g}$ in the subgroup generated by the edge labels, using edge labels as the (finite) generating set. A geodesic graph structure is called a \emph{geodesic combing} if, in addition, the evaluation map is a bijection from the set of finite paths starting at $v_0$ to the set of elements of $G$. We say that $\Gamma$ is a geodesic combing \emph{associated to} a finite generating set $S$ if, up to adding inverses, $S$ is the set of edge labels for the graph structure. In this case, $\Vert g \Vert$ is equal to the word length of $\overline{g}$ with respect to $S$. 

We will make use of the following notion of biautomatic. See, for example, \cite{Mosher} and \cite[Lemma 2.5.5]{word-pro}. First, fix a word metric $d_G$ on $G$.
For a finite path $g$ in $\Gamma$, we denote by $g(i)$ the length $i$ prefix of $g$ when $i \le \Vert g \Vert$ and set $g(i) = g$ otherwise. 

\begin{definition}[Biautomatic graph structure] \label{D:fellow-travel}
A graph structure $\Gamma$ for $G$ is \emph{biautomatic} if the following holds. 
For any finite set $B \subseteq G$ there exists $C \ge 0$ so that if 
$g$ and $h$ are finite length paths in $\Gamma$, 
and $\overline{g} = b_1 \overline{h} b_2$ in $G$, with $b_1, b_2 \in B$, then 
\[
d_G (\overline{g(i)}, b_1 \overline{h(i)} ) \le C,
\]
for all $i\ge 0$.
\end{definition}

\begin{definition}
A \emph{bicombing} for the generating set $S$ on a group $G$ is a geodesic combing whose graph structure $\Gamma$ is biautomatic, 
and is \emph{thick} if the graph structure is thick. 
\end{definition}
 
We emphasize that the geodesic condition is used in the applications of our main theorem (as in Theorem \ref{T:main}--\ref{th:raag}), 
but is not required in the proof of the most general results, Theorems \ref{T:main-almostss}, \ref{T:CLT-tau}. 

\subsection{Cocycles and horofunctions}

Let $(X, d)$ be a metric space, and let $o \in X$ be a base point. Given $z \in X$, we define the 
\emph{Busemann function} $\rho_z : X \to \mb R$ as 
$$\rho_z(x) := d(x, z) - d(o, z).$$
Thus, setting
$$\Phi(z) := \rho_z$$
defines a map $$\Phi : X \to \textup{Lip}^1_o(X)$$
where $\textup{Lip}^1_o(X)$ is the space of $1$-Lipschitz functions on $X$ which vanishes at $o$. 

We define the \emph{horofunction compactification} $\overline{X}^h$ as the closure of $\Phi(X)$ in $\textup{Lip}^1_o(X)$, 
with respect to the topology of pointwise convergence. Elements of $\overline{X}^h$ will be called \emph{horofunctions}.
We denote as $\overline{X}_\infty^h$ the space of \emph{infinite horofunctions}, i.e. the set of $h \in \overline{X}^h$ such that 
$\inf_{x \in X} h(x) = - \infty$.

For any $\xi \in \overline{X}^h$, the \emph{Busemann cocycle} is defined as 
\begin{align*}
\beta_{\xi}(x, y) &:= \lim_{z_n \to \xi} [ d(y, z_n) - d(x, z_n)] \\
&= h_\xi(y) - h_\xi(x),
\end{align*}
where $h_\xi$ is the horofunction associated to $\xi$.  This has the usual cocycle property $\beta_{\xi}(x, z) =\beta_{\xi}(x, y) +\beta_{\xi}(y, z)$.

\begin{remark}
Benoist-Quint \cite{BQ-hyperbolic} and Horbez \cite{Horbez} define $B \colon G \times \overline{X}^h \to \mathbb{R}$ by
\[
B(g, \xi) = h_\xi(g^{-1} o).
\]
To compare their definition with ours:
\[
B(g, \xi) = h_\xi(g^{-1} o) = \lim_{z_n \to \xi} [ d(g^{-1}o, z_n) - d(o, z_n) ] = \beta_\xi(o, g^{-1} o).
\]
\end{remark}

\section{CLT for random walks on the loop semigroups} \label{S:loop}

Let $\Gamma$ be a graph structure for $G$, and let $v$ be a vertex in a maximal component. Recall that a loop is \emph{prime} if it is not itself a product of nontrivial loops, and that prime loops generate $\Gamma_v$ as a semigroup. 

Given a probability measure $\mu$ on the set of edges of $\Gamma$, one defines the \emph{first return measure} $\mu_v$ on $\Gamma_v$ as follows: 
if $l = g_1 \dots g_n$ is a prime loop in $\Gamma_v$,  
then we set 
$$\mu_v(l) := \mu(g_1) \cdots \mu(g_n).$$
We set $\mu_v(l) = 0$ for all other loops. Note that inversion defines a map $\Gamma_v \to \Gamma^{-1}_v$ and we define the measure $\check \mu$ on $\Gamma^{-1}_v$ by
$\check \mu (l) = \mu(l^{-1})$. These measures push forward to measures on the group $G$ under the evaluation map. We say that $\mu_v$ is \emph{nondegenerate} if it gives positive measure to any prime loop of $\Gamma_v$. 

Let $\mc M$ be a metric space on which $G$ acts by homeomorphisms. A measure $\nu$ on $\mc M$ is $\mu$\emph{-stationary} if 
$\nu = \int_G g_\star\nu \ d\mu(g)$, and $\mu$\emph{-ergodic} if it cannot be written as a nontrivial convex combination of $\mu$-stationary measures. 

\subsection{Central limit theorems for cocycles}

Recall that a \emph{cocycle} is a function $\sigma \colon G \times \mc M \to \mathbb{R}$ such that 
$$\sigma(gh, x) = \sigma(g, h x) + \sigma(h, x), \qquad  \forall g, h \in G, \forall x \in \mc M.$$
A cocycle $\sigma \colon G \times \mc M \to \mathbb{R}$ has \emph{constant drift} $\lambda$ if there exists $\lambda \in \mathbb{R}$ such that
$$\int_G \sigma(g, x) \ d\mu(g) = \lambda$$
for any $x \in \mc M$. 
A cocycle $\sigma \colon G \times \mc M \to \mathbb{R}$ is \emph{centerable} if it can be written as
\begin{equation*}
\sigma(g, x) = \sigma_0(g, x) + \psi(x) - \psi(g \cdot x)
\end{equation*}
where $\sigma_0$ is a cocycle with constant drift and where $\psi \colon \mc M \to \mathbb{R}$ is a bounded, measurable function.
In this case, we say that $\sigma_0$ is the \emph{centering} of $\sigma$; note that 
$\lambda = \int_{G \times \mc M} \sigma(g, x) \ d\mu(g) d \nu(x)$ for any $\mu$-stationary $\nu$.
We say that the cocycle $\sigma$ has \emph{finite second moment} with respect to a measure $\mu$ on $G$ if
$$ \int_G \sup_{x\in \mc M} | \sigma(g,x)|^2 \ d\mu(g)< +\infty.$$

\smallskip
We now use the following CLT for centerable cocycles: as remarked in \cite[Remark 1.7]{Horbez}, 
the proof is exactly the same as the proof of \cite[Theorem 4.7]{BQ-hyperbolic}.

\begin{theorem}[Central limit theorem for cocycles] \label{T:sigma-CLT}
Let $G$ be a discrete group, $\mc M$ be a compact metrizable $G$-space and $\mu$ a probability measure on $G$.
Let $\nu$ be a $\mu$-ergodic, $\mu$-stationary probability measure on $\mc M$, and let $\mc M_0$ be a $G$-invariant subset of $\mc M$ of full 
$\nu$-measure. Let $\sigma \colon G \times \mc M_0 \to \R$ be a 
centerable cocycle with drift $\lambda$ and finite second moment. 
Then there exist $\sigma \geq 0$ such that for any continuous $F \colon \mathbb{R} \to \mathbb{R}$ with compact support, 
we have for $\nu$-a.e. $x \in \mc M$, 
$$\lim_{n \to \infty} \int_G F\left( \frac{\sigma(g, x) - n \lambda}{\sqrt{n}} \right) \ d \mu^{*n}(g) = \int_{\mathbb{R}} F(t) \ d \mc N_\sigma(t).$$
\end{theorem}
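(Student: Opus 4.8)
The plan is to reduce to a cocycle of constant drift and then apply a standard martingale central limit theorem, following the scheme of the proof of \cite[Theorem 4.7]{BQ-hyperbolic}. First I would use centerability to pass to the centering: write $\sigma(g,x) = \sigma_0(g,x) + \psi(x) - \psi(gx)$ with $\sigma_0$ of constant drift $\lambda$ and $\psi$ bounded. Since $|\sigma(g,x) - \sigma_0(g,x)| \le 2\|\psi\|_\infty$ for all $(g,x)$, dividing by $\sqrt n$ kills this difference and, by Slutsky's lemma, it suffices to prove the statement with $\sigma_0$ in place of $\sigma$; note $\sup_x |\sigma_0(g,x)| \le \sup_x |\sigma(g,x)| + 2\|\psi\|_\infty$, so $\sigma_0$ still has finite second moment. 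Now fix $x \in \mc M_0$, realize $\mu^{*n}$ as the law of the product $g_n \cdots g_1$ of i.i.d.\ increments $g_i \sim \mu$, work on the Bernoulli space with measure $\beta := \mu^{\otimes\N}$, set $x_0 = x$ and $x_k := g_k \cdots g_1 x \in \mc M_0$, and let $\mathcal{F}_k$ be the $\sigma$-algebra generated by $g_1, \dots, g_k$. By the cocycle relation,
\[
M_n := \sigma_0(g_n \cdots g_1, x) - n\lambda = \sum_{k=1}^n \bigl( \sigma_0(g_k, x_{k-1}) - \lambda \bigr).
\]
Since $x_{k-1}$ is $\mathcal{F}_{k-1}$-measurable, $g_k$ is independent of $\mathcal{F}_{k-1}$, and $\int_G \sigma_0(g,y)\, d\mu(g) = \lambda$ for every $y$, the summands are martingale differences, so $(M_n)_n$ is an $(\mathcal{F}_n)$-martingale. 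This is the only place where the constant drift is used.

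I would then verify the hypotheses of a standard (triangular-array) martingale CLT for the differences $X_k := \sigma_0(g_k, x_{k-1}) - \lambda$. For the conditional Lindeberg condition, each $X_k$ is dominated in absolute value by $\Phi(g_k)$, where $\Phi(g) := \sup_y |\sigma_0(g,y) - \lambda|$ satisfies $\int_G \Phi^2\, d\mu < \infty$; hence the truncated conditional second moments are bounded by $\mathbb{E}\bigl[\Phi(g_1)^2 \, ; \, \Phi(g_1) > \epsilon\sqrt n\bigr]$, which tends to $0$ by dominated convergence, and summing over $k$ and dividing by $n$ gives Lindeberg. The remaining, and main, hypothesis is the convergence of the normalized conditional variance: here $\mathbb{E}\bigl[X_k^2 \mid \mathcal{F}_{k-1}\bigr] = \Psi(x_{k-1})$ with $\Psi(y) := \int_G (\sigma_0(g,y)-\lambda)^2\, d\mu(g) \in L^1(\nu)$ (since $\int_{\mc M} \Psi\, d\nu \le \int_G \Phi^2\, d\mu < \infty$), and I need $\tfrac1n \sum_{k=1}^n \Psi(x_{k-1})$ to converge to a constant $\sigma^2$.

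This last point is where I expect the actual work. The idea is to apply Birkhoff's ergodic theorem to the skew-shift $T\bigl((g_i)_i, y\bigr) = \bigl((g_{i+1})_i, g_1 y\bigr)$ on $\bigl(G^\N \times \mc M, \beta \otimes \nu\bigr)$ (which preserves $\beta \otimes \nu$ because $\nu$ is $\mu$-stationary) and to the function $\bigl((g_i), y\bigr) \mapsto \Psi(y)$; this gives $\tfrac1n \sum_{k=0}^{n-1} \Psi\bigl(g_k \cdots g_1 y\bigr) \to \int_{\mc M} \Psi\, d\nu =: \sigma^2$ for $(\beta \otimes \nu)$-almost every $\bigl((g_i), y\bigr)$. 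The essential input is that $T$ is \emph{ergodic}, which is exactly where $\mu$-ergodicity of $\nu$ enters; were $\nu$ not ergodic, the limit would be a nonconstant function of $x$ and one would obtain a mixture of Gaussians rather than a single one. A Fubini argument then converts ``$(\beta \otimes \nu)$-a.e.'' into ``for $\nu$-a.e.\ $x$, for $\beta$-a.e.\ $(g_i)$'', using that $\nu(\mc M_0) = 1$ and that the trajectory $(x_k)$ stays in $\mc M_0$ (so that $\Psi$ need only be evaluated on $\mc M_0$); in particular $\tfrac1n \sum_{k=1}^n \mathbb{E}[X_k^2 \mid \mathcal{F}_{k-1}] \to \sigma^2$ almost surely, for $\nu$-a.e.\ $x$.

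With both hypotheses in hand, the martingale CLT yields $M_n / \sqrt n \to \mc N_\sigma$ in distribution under $\beta$, for $\nu$-a.e.\ $x$ (and $M_n/\sqrt n \to 0$ in probability when $\sigma^2 = 0$, matching the convention $\mc N_0 = \delta_0$). To finish, note $\sigma(g_n\cdots g_1, x) - n\lambda = M_n + \psi(x) - \psi(g_n\cdots g_1 x)$, with the last two terms bounded by $2\|\psi\|_\infty$; since any compactly supported continuous $F$ is bounded and uniformly continuous,
\[
\int_G F\!\left( \frac{\sigma(g,x) - n\lambda}{\sqrt n} \right) d\mu^{*n}(g) = \mathbb{E}_\beta\!\left[ F\!\left( \frac{M_n + O(1)}{\sqrt n} \right) \right] \longrightarrow \int_\R F(t)\, d\mc N_\sigma(t),
\]
which is the asserted convergence, with drift $\lambda$ and the $\sigma \ge 0$ produced above. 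The main obstacle throughout is the ergodic-theoretic step just described — ergodicity of the random walk on $\mc M$ and the measure-theoretic passage to a fixed base point $x$; everything else is routine bookkeeping around the finite second moment hypothesis and the reduction to constant drift.
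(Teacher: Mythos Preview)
The paper does not give its own proof of this theorem; it simply quotes it from \cite[Theorem 4.7]{BQ-hyperbolic} via \cite[Remark 1.7]{Horbez}. Your sketch is precisely the Benoist--Quint argument that the paper cites --- centerability reduction to constant drift, martingale decomposition via the cocycle relation, Lindeberg from the uniform second-moment bound $\sup_x|\sigma_0(g,x)|$, and convergence of the averaged conditional variances via Birkhoff for the ergodic skew shift $(\beta\otimes\nu,T)$ --- so your approach matches the cited one.
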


We now apply this result to the loop semigroup. 
Let $F_v$ be the group freely generated by the prime loops in $\Gamma_v$.  

Let $N\colon  \Gamma_v \to \mathbb{Z}$ be the semigroup homomorphism $N(g) := - \Vert g \Vert$, where $\Vert g \Vert$ is the length in $\Gamma$
of the loop $g$.
There is a natural inclusion $\Gamma_v \to F_v$ as a subsemigroup and we can extend the semigroup homomorphism above 
to a group homomorphism $N\colon F_v \to \mathbb{Z}$. 
Moreover, we also extend the natural semigroup homomorphism $\Gamma_v \to G$, induced by evaluation, to a group homomorphism 
$e \colon F_v \to G$. Now, using the homomorphism $e \colon F_v \to G$, the free group $F_v$ has a nonelementary action on $X$, and moreover $\mu_v^{*n}$ is supported on $\Gamma_v \subseteq F_v$ for all $n\ge1$.

Finally, for some $\ell \in \mb R$ to be specified below, we define $\eta \colon F_v \times \overline X^h \to \mathbb{R}$ as 
$$\eta(g, \xi) := \beta_{\xi}(o, g^{-1} o) - \ell N(g).$$

\begin{lemma} 
Suppose that the action of $\Gamma_v$ on $X$ is nonelementary and $\mu_v$ is nondegenerate. Then for any $\ell \in \mathbb{R}$, the 
restriction of $\eta : F_v \times \overline X^h \to \mathbb{R}$ to $F_v \times \overline{X}_\infty^h$  is a centerable cocycle. 
\end{lemma}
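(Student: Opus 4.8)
The plan is to show that $\eta$ differs from a constant-drift cocycle by a bounded coboundary. The natural candidate for the bounded correcting function is essentially the Gromov product: I would set $\psi(\xi)$ to be (a suitable regularization of) the Gromov product $(\xi \mid o)_{\cdot}$ or, more concretely, follow Benoist--Quint and Horbez and use the function measuring how far the horofunction $\xi$ is from being ``aligned'' with a fixed reference point. First I would recall that for a nonelementary action $F_v \curvearrowright X$ on a $\delta$-hyperbolic space with the measure $\mu_v$ (which, by nondegeneracy, has support generating a nonelementary subsemigroup, and whose $\mu_v$-boundary lives in $\overline X_\infty^h$), the work of Benoist--Quint \cite{BQ-hyperbolic} and Horbez \cite{Horbez} shows that the Busemann cocycle $B(g,\xi) = \beta_\xi(o,g^{-1}o)$ is a centerable cocycle on $F_v \times \overline X_\infty^h$: that is, $B = B_0 + \psi(\xi) - \psi(g\xi)$ with $B_0$ of constant drift and $\psi$ bounded measurable. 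This is exactly the content invoked in the paragraph preceding Theorem~\ref{T:sigma-CLT}, applied with $G$ replaced by $F_v$ and $\M = \overline X^h$, $\M_0 = \overline X_\infty^h$.

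Granting that, the remaining point is to handle the linear term $-\ell N(g)$. Since $N \colon F_v \to \mathbb{Z}$ is a group homomorphism, the function $(g,\xi) \mapsto -\ell N(g)$ is itself a cocycle which does not depend on the $\overline X^h$-coordinate, and a cocycle independent of the space coordinate that is a homomorphism automatically has constant drift, equal to $-\ell \int_{F_v} N(g)\, d\mu_v(g)$. (This integral is finite: $N(g) = -\Vert g\Vert$ and $\mu_v$ is the first-return measure of a finitely-supported edge measure, so $\mu_v$ has exponential tails in the loop length, hence all moments.) Therefore $\eta = B - \ell N = (B_0 - \ell N) + \psi(\xi) - \psi(g\xi)$, and $B_0 - \ell N$ is a sum of two constant-drift cocycles, hence a constant-drift cocycle with drift $\lambda_{B_0} - \ell \int N\, d\mu_v$. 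The boundedness and measurability of $\psi$ are unchanged, so $\eta$ is centerable with centering $B_0 - \ell N$. Note the conclusion holds for \emph{any} $\ell \in \mathbb{R}$, as claimed; the specific $\ell$ matters only later, when one wants to match drifts across components.

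The main obstacle is establishing the centerability of the raw Busemann cocycle $B$ on the \emph{nonproper} horofunction boundary $\overline X_\infty^h$ and checking that the hypotheses of the Benoist--Quint/Horbez machinery genuinely apply in this setting: one needs that $\overline X^h$ is compact metrizable (true, as a closed subset of $\mathrm{Lip}^1_o(X)$ with the topology of pointwise convergence, once one checks local compactness/metrizability — this is where separability of $X$ or second-countability enters, and is part of Horbez's framework), that there is a unique $\mu_v$-stationary measure $\nu$ supported on $\overline X_\infty^h$, and that $B$ has finite first moment with respect to $\mu_v$ so that the drift and the cohomological correction are well-defined. The finite-moment input is again the exponential-tail estimate for first-return measures combined with the bound $|\beta_\xi(o,g^{-1}o)| \le d(o, g^{-1}o) \le \Vert g\Vert \cdot \max_{e}d(o, \overline{e}o)$. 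Once these standard-but-technical points are in place, the decomposition above is immediate, so I would structure the write-up as: (i) cite the horofunction-boundary CLT framework and its centerability statement for $B$; (ii) observe $-\ell N$ is a constant-drift cocycle independent of $\xi$ with finite moment; (iii) add the two and read off that $\eta$ is centerable, recording the centering $\eta_0 := B_0 - \ell N$ for later use.
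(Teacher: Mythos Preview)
Your proposal is correct and follows essentially the same route as the paper: cite Horbez (building on Benoist--Quint) for the centerability of the Busemann cocycle $B(g,\xi)=\beta_\xi(o,g^{-1}o)$ on $F_v\times\overline{X}_\infty^h$, observe that $\eta-B=-\ell N$ is a homomorphism in $g$ alone and hence a constant-drift cocycle, and conclude that $\eta$ is centerable with the same coboundary $\psi$. The only cosmetic difference is that the paper first verifies the cocycle identity for $\eta$ by a direct four-line computation, whereas you obtain it implicitly as a sum of two cocycles; your additional remarks on finite moments and metrizability are reasonable side checks but are not needed for the bare centerability statement.
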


\begin{proof}
We have 
\begin{align*}
\eta(gh, \xi) & = \beta_\xi(o, h^{-1} g^{-1} o) - \ell N(gh) \\
& = \beta_\xi(o, h^{-1} o) + \beta_{\xi}(h^{-1} o, h^{-1} g^{-1} o)  - \ell N(g) - \ell N(h)  \\
& = \beta_\xi(o, h^{-1} o) + \beta_{h \xi}(o, g^{-1} o)  - \ell N(g) - \ell N(h) \\
& = \eta(h, \xi) + \eta(g, h\xi)
\end{align*}
hence $\eta$ is a cocycle. Moreover, by \cite[Proposition 1.5]{Horbez}, using \cite[Corollary 2.7]{Horbez} and \cite[Proposition 2.8]{Horbez}, the cocycle $B(g, \xi) = \beta_\xi(o, g^{-1}o)$ is centerable on $F_v \times \overline{X}_\infty^h$. 
Then, since $\eta(g, \xi) - B(g, \xi) = \ell N(g)$ is a homomorphism and depends only on $g$, we have that 
$\eta(g, \xi)$ is also centerable on $F_v \times \overline{X}_\infty^h$. 
\end{proof}

Thus, as a consequence of Theorem \ref{T:sigma-CLT}, we obtain the following. 

\begin{corollary} \label{C:CLT-loop}
Let $\Gamma$ be a thick structure, let $v$ be a vertex in a maximal component of $\Gamma$. Suppose that the first return measure $\mu_v$ 
is nondegenerate, and let $\nu_v$ 
be a $\check{\mu}_v$-ergodic, $\check{\mu}_v$-stationary measure on $\overline{X}^h$. Then there exist $\ell, \sigma \ge 0$ such that for any continuous $F \colon \mathbb{R} \to \mathbb{R}$ with compact support, we have
for $\nu_v$-a.e. $\xi$, 
$$\lim_{n \to \infty} \int_G F \left( \frac{\beta_\xi(o, g o) - \ell \Vert g \Vert}{\sqrt{n}} \right) \ d \mu_v^{*n}(g) = \int_{\mathbb{R}} F(t) \ d \mc N_\sigma(t).$$
\end{corollary}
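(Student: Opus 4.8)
The plan is to deduce Corollary \ref{C:CLT-loop} from Theorem \ref{T:sigma-CLT} applied to the cocycle $\eta$ constructed just above, combined with the preceding lemma that $\eta$ is centerable. First I would set up the ambient data: take $\mc M = \overline{X}^h$, which is compact metrizable because $X$ is separable (or, if needed, one passes to a suitable compact invariant subspace), and take $\mc M_0 = \overline{X}^h_\infty$, the set of infinite horofunctions, which is $G$-invariant. The relevant measure on the group is $\check\mu_v$, pushed forward to $G$ via the evaluation map $e \colon F_v \to G$; note $\check\mu_v$ is supported on $\Gamma_v^{-1} \subseteq F_v$. By Lemma \ref{lem:thick_implies}, since $\Gamma$ is thick and $G \curvearrowright X$ is nonelementary, the action of $\Gamma_v$ (hence of $\Gamma_v^{-1}$) on $X$ is nonelementary, and $\mu_v$ is nondegenerate by hypothesis, so the preceding lemma applies and $\eta$ restricted to $F_v \times \overline{X}^h_\infty$ is a centerable cocycle, for any choice of $\ell$.

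Next I would pin down $\ell$ so that $\eta$ has a well-defined drift and verify the finite second moment hypothesis. The drift of $\eta$ with respect to $\check\mu_v$ should be zero (or, more precisely, $\ell$ is chosen to be the ratio $\int_G \beta_\xi(o, g^{-1}o) \, d\check\mu_v \, d\nu_v / \int N \, d\check\mu_v$ of the drifts of $B$ and of $-N$, using that the centering of a centerable cocycle has constant drift equal to the $\nu_v$-average); this is the value of $\ell$ referenced as "to be specified" in the text. For the second moment condition, since $|\beta_\xi(o, g^{-1}o)| \le d(o, g^{-1}o)$ and $|N(g)| = \Vert g\Vert$ is at most the $\Gamma$-length, $\sup_\xi |\eta(g,\xi)|^2$ is controlled by $(d(o, g^{-1}o) + |\ell| \Vert g \Vert)^2$, and one must check $\int (\cdot) \, d\check\mu_v < \infty$. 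Because $\mu$ is supported on the (finitely many) edges of $\Gamma$, the first return measure $\check\mu_v$ has exponential tails in the loop length $\Vert l \Vert$, while $d(o, g^{-1}o)$ grows at most linearly in $\Vert l \Vert$ times the maximal edge-displacement; hence the second moment is finite — in fact all moments are.

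With these hypotheses in hand, Theorem \ref{T:sigma-CLT}, applied with $\mu$ replaced by $\check\mu_v$, $\nu$ by the $\check\mu_v$-ergodic $\check\mu_v$-stationary measure $\nu_v$, and $\sigma$ by $\eta$, produces a $\sigma \ge 0$ such that for $\nu_v$-a.e. $\xi$ and any compactly supported continuous $F$,
\[
\lim_{n \to \infty} \int_G F\!\left( \frac{\eta(g, \xi) - n \lambda}{\sqrt{n}} \right) d\check\mu_v^{*n}(g) = \int_{\mathbb{R}} F(t)\, d\mc N_\sigma(t),
\]
with $\lambda = 0$ by the choice of $\ell$. It then remains to unwind notation: by definition $\eta(g,\xi) = \beta_\xi(o, g^{-1}o) - \ell N(g) = \beta_\xi(o, g^{-1}o) + \ell \Vert g\Vert$ for $g \in F_v$, and pushing the walk forward to $G$ via $g \mapsto g^{-1}$ (so $\check\mu_v^{*n}$ becomes $\mu_v^{*n}$) and using that $\Vert g^{-1}\Vert = \Vert g \Vert$ on loops turns $\beta_\xi(o, g^{-1}o) + \ell\Vert g\Vert$ into $\beta_\xi(o, go) - \ell\Vert g\Vert$ after relabeling — up to a sign bookkeeping in $\ell$ and a possible absorption of a bounded coboundary term, which is exactly what "centerable" lets us discard inside the rescaled limit. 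This last reconciliation between the Benoist--Quint/Horbez convention $B(g,\xi) = h_\xi(g^{-1}o)$ and the displacement $\beta_\xi(o, go)$, together with tracking the role of $\mu_v$ versus $\check\mu_v$, is the one genuinely fiddly point; the main substantive obstacle is the verification that $\mc M = \overline{X}^h$ is compact metrizable and that the cocycle has the required finite second moment, but both follow from separability of $X$ and the exponential tail of the first-return measure, respectively.
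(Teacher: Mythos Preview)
Your approach is exactly the paper's: apply Theorem \ref{T:sigma-CLT} to the centerable cocycle $\eta$ with respect to $\check\mu_v$, choose $\ell$ so the drift vanishes, and unwind. Two small points to clean up. First, you must verify that $\nu_v(\overline{X}^h_\infty)=1$, since $\eta$ is only shown to be centerable on $F_v\times\overline{X}^h_\infty$ and Theorem \ref{T:sigma-CLT} requires $\mc M_0$ to have full $\nu$-measure; the paper handles this in one line by invoking \cite[Proposition 4.4]{MT} together with the nonelementarity of $\Gamma_v^{-1}$. Second, your sign bookkeeping is tangled: the formula $N(g)=-\Vert g\Vert$ holds only on $\Gamma_v$, whereas $\check\mu_v$ is supported on $\Gamma_v^{-1}$, where the homomorphism extension gives $N(g)=-N(g^{-1})=+\Vert g^{-1}\Vert$. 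Thus for $g\in\Gamma_v^{-1}$ one has directly $\eta(g,\xi)=\beta_\xi(o,g^{-1}o)-\ell\Vert g^{-1}\Vert$, and the substitution $h=g^{-1}\in\Gamma_v$ (which turns $\check\mu_v^{*n}$ into $\mu_v^{*n}$) yields $\eta(h^{-1},\xi)=\beta_\xi(o,ho)-\ell\Vert h\Vert$ on the nose, with no residual sign adjustment in $\ell$ or coboundary absorption needed.
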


\begin{proof}
We apply Theorem \ref{T:sigma-CLT} to the measure $\check{\mu}_v$, supported on $\Gamma_v^{-1}$, where $\ell$ is chosen so that 
$\lambda = \int_{F_v \times \overline X^h} \eta(g, \xi) \ d\check{\mu}_v(g) d\nu_v(\xi) = 0$. Note that by \cite[Proposition 4.4]{MT} and the fact that $\Gamma_v^{-1}$ is nonelementary, we have $\nu_v(\overline{X}^h_\infty) = 1$. 
Moreover, for any $g \in \Gamma_v$ we have 
\[
\eta(g^{-1}, \xi) = \beta_{\xi}(o, g o) - \ell \Vert g \Vert.
\qedhere
\]
\end{proof}

\subsection{Skew products and invariance on the loop semigroup}

Let $\mc M$ be a compact metric space with a continuous $G$-action. We define the \emph{skew product } $T\colon \Omega \times \mc M \to \Omega \times \mc M$ as 
$$T(\omega, \xi) := (\sigma(\omega), g_1^{-1} \xi)$$
where $\omega = (g_1, g_2, \dots)$. 

A graph structure $\Gamma$ is \emph{primitive} if its associated transition matrix $M$ is primitive, i.e. has a positive power. 
Now let $\Gamma$ be a primitive graph structure, let $v$ be a vertex of $\Gamma$, let $\Gamma_v$ be the loop semigroup, and let $\mu_v$ be the first return measure. 

Finally, let $\Omega_v = (\Gamma_v)^\mathbb{N}$ with shift map $\sigma_v$. 
To highlight the difference, we denote the elements of $\Gamma_v^\mathbb{N}$ as $(l_1, l_2, \dots)$,
since each element of the sequence is a loop, while the elements of $\Omega$ will be denoted as $\omega = (g_1, g_2, \dots)$, 
since its elements are edges.
Let us define the map $T_v : \Omega_v \times \mc M \to \Omega_v \times \mc M$ as 
$$T_v(\omega, \xi) = (\sigma_v (\omega), l_1^{-1} \xi).$$

\begin{lemma} \label{L:invariant1}
A measure $\nu$ on $\mc M$ is $\check{\mu}_v$-stationary  if and only if $\mu_v^\mathbb{N} \otimes \nu$ is $T_v$-invariant. 
\end{lemma}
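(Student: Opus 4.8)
The plan is to unwind both conditions into statements about integrals of test functions and check they coincide. Recall that $\mu_v^\mathbb{N}\otimes\nu$ being $T_v$-invariant means that for every bounded measurable $F$ on $\Omega_v\times\mc M$ we have $\int F\circ T_v \, d(\mu_v^\mathbb{N}\otimes\nu) = \int F\, d(\mu_v^\mathbb{N}\otimes\nu)$. The natural strategy is to test against functions of the form $F(\omega,\xi) = \varphi(l_1)\, \psi(\xi)$, where $\varphi$ depends only on the first loop coordinate $l_1$ and $\psi\in C(\mc M)$; since $\Gamma_v$ is countable and these products span a dense (in the appropriate sense) family, invariance on all such $F$ is equivalent to full $T_v$-invariance. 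This is the standard reduction and I would state it as a one-line remark rather than belabor it.

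For such an $F$, I would compute directly. On the one hand,
\[
\int_{\Omega_v\times\mc M} F\, d(\mu_v^\mathbb{N}\otimes\nu) = \left(\sum_{l\in\Gamma_v}\mu_v(l)\varphi(l)\right)\int_{\mc M}\psi\, d\nu .
\]
On the other hand, $F\circ T_v(\omega,\xi) = \varphi(l_2)\,\psi(l_1^{-1}\xi)$, so integrating first over the $\mu_v^\mathbb{N}$-coordinates and using independence of $l_1$ and $l_2$,
\[
\int F\circ T_v\, d(\mu_v^\mathbb{N}\otimes\nu) = \left(\sum_{l_2}\mu_v(l_2)\varphi(l_2)\right)\int_{\mc M}\left(\sum_{l_1}\mu_v(l_1)\psi(l_1^{-1}\xi)\right) d\nu(\xi).
\]
Comparing the two expressions, and noting that $\sum_{l_2}\mu_v(l_2)\varphi(l_2)$ is a common factor that is nonzero for a suitable choice of $\varphi$, $T_v$-invariance holds for all such $F$ if and only if
\[
\int_{\mc M}\psi\, d\nu = \int_{\mc M}\sum_{l\in\Gamma_v}\mu_v(l)\,\psi(l^{-1}\xi)\, d\nu(\xi) = \int_{\mc M}\psi\, d\Big(\textstyle\sum_l \mu_v(l)\,(l^{-1})_\star\nu\Big)
\]
for every $\psi\in C(\mc M)$. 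The measure $\sum_l \mu_v(l)\,(l^{-1})_\star\nu$ is precisely $\int_G g_\star\nu\, d\check\mu_v(g)$ by definition of $\check\mu_v$ (push forward under $l\mapsto l^{-1}$), so this last identity is exactly the statement that $\nu$ is $\check\mu_v$-stationary. Running the equivalence in both directions closes the argument.

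I do not expect a serious obstacle here; the only points requiring a little care are (a) justifying that testing against the product functions $\varphi(l_1)\psi(\xi)$ suffices — this uses that cylinder functions depending on finitely many coordinates are dense and a monotone-class or Stone–Weierstrass argument, together with the fact that both measures are probability measures so everything is finite — and (b) correctly tracking the inversion: the shift $T_v$ acts by $\xi\mapsto l_1^{-1}\xi$, which is why the stationarity that emerges is with respect to $\check\mu_v$ (the reflected measure) rather than $\mu_v$ itself. Once the bookkeeping of the inverse is done consistently, the computation above is a direct verification.
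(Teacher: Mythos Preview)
Your proof is correct and follows essentially the same approach as the paper: both verify invariance by checking on product sets/functions and reduce to the identity $\sum_l \mu_v(l)\,\nu(lA)=\nu(A)$. The paper works with measurable rectangles $C\times A$ (arbitrary measurable $C\subset\Omega_v$) and computes $T_v^{-1}(C\times A)=\bigcup_l C_l\times lA$ directly, which is the set-theoretic dual of your integral computation.

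One small point worth tightening: the family $\varphi(l_1)\psi(\xi)$ you explicitly test against is not by itself large enough to determine a measure on $\Omega_v\times\mc M$, since it only sees the first loop coordinate. To make the reduction in your remark (a) go through, either allow $\varphi$ to depend on finitely many coordinates $l_1,\dots,l_n$ --- your computation then extends verbatim, using that $\mu_v^{\mathbb{N}}$ is shift-invariant so the $\varphi$-factor is unchanged --- or simply test on rectangles $C\times A$ as the paper does. You already anticipate this in (a), so there is no genuine gap.
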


\begin{proof} 
Fix $C \subset \Omega_v$ measurable and let $C_l \subset \Omega_v$ be the subset consisting of sequences beginning with $l \in \Gamma_v$ such that 
$\sigma_v (C_l) = C$. Then for any $A \subset \mc M$ measurable, 
\[
T_v^{-1} (C \times A) = \bigcup_l C_l \times l A.
\]
Since
\begin{align*}
\mu_v^\mathbb{N} \otimes \nu \left (\bigcup_l C_l \times l A \right) 
&= \mu(C) \sum_l \mu(l) \nu(lA) \\
&= \mu(C) \sum_l \check \mu(l) l_*\nu(A) ,
\end{align*}
the lemma follows.
\end{proof}

\begin{lemma} \label{lem:measure}
There exists an ergodic $\check{\mu}_v$-stationary measure $\nu_v$ on $\mc M$ such that the product measure $\mu_v^\mathbb{N} \otimes \nu_v$
is $T_v$-invariant and ergodic. 
\end{lemma}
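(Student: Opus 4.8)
The plan is to obtain $\nu_v$ in four steps: produce a $\check\mu_v$-stationary measure by a compactness argument, pass to an extreme point of the set of such measures to make it $\check\mu_v$-ergodic, invoke Lemma~\ref{L:invariant1} for the $T_v$-invariance of the product, and finally deduce $T_v$-ergodicity from the standard skew-product ergodicity criterion for random walks.

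Since $\mc M$ is compact metrizable, the space $\mc P(\mc M)$ of Borel probability measures is convex and weak-$*$ compact, and the Markov operator $\nu \mapsto \int g_* \nu \ d\check\mu_v(g)$ (the integral taken over the countable set $\Gamma_v^{-1}$ via the evaluation map) is affine and weak-$*$ continuous; a weak-$*$ limit point of the Ces\`aro averages $\frac{1}{n}\sum_{k=0}^{n-1} (\check\mu_v)^{*k}*\nu_0$ is $\check\mu_v$-stationary. Thus $\mathrm{Stat}(\check\mu_v)$, the set of $\check\mu_v$-stationary probability measures, is nonempty, convex and weak-$*$ compact, so by the Krein--Milman theorem it has an extreme point $\nu_v$; an extreme point of $\mathrm{Stat}(\check\mu_v)$ is $\check\mu_v$-ergodic by definition. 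Lemma~\ref{L:invariant1} then gives that $\mu_v^\mathbb{N}\otimes\nu_v$ is $T_v$-invariant.

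It remains to show $\mu_v^\mathbb{N}\otimes\nu_v$ is $T_v$-ergodic, which is the crux. Here I would invoke the classical fact from the boundary theory of random walks that, over a one-sided Bernoulli base, the skew product attached to a $\check\mu_v$-stationary measure is ergodic if and only if that measure is $\check\mu_v$-ergodic (see \cite{BQ-hyperbolic}, \cite{Horbez} and the references therein). The mechanism: a $T_v$-invariant $\varphi\in L^\infty(\mu_v^\mathbb{N}\otimes\nu_v)$ satisfies $\varphi(\omega,\xi)=\varphi(\sigma_v^n\omega,\, l_n^{-1}\cdots l_1^{-1}\xi)$ for every $n$, so conditioning on $(l_1,\dots,l_n,\xi)$ and using independence of $\sigma_v^n\omega$ from these coordinates under $\mu_v^\mathbb{N}$ gives $\mathbb{E}[\varphi\mid l_1,\dots,l_n,\xi]=\Phi(l_n^{-1}\cdots l_1^{-1}\xi)$, where $\Phi(\xi):=\int\varphi(\omega',\xi)\,d\mu_v^\mathbb{N}(\omega')$ is bounded and $\check\mu_v$-harmonic on $\mc M$. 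Martingale convergence identifies $\varphi$ with the almost-everywhere limit of $\Phi$ along the $\check\mu_v$-random walk from $\xi$, and $\check\mu_v$-ergodicity of $\nu_v$ forces a bounded $\check\mu_v$-harmonic function on $\mc M$ to be $\nu_v$-almost-everywhere constant --- one first upgrades harmonicity to genuine $\check\mu_v$-invariance of $\Phi$ by observing that $\Phi^2$ is also harmonic and invoking the equality case of Jensen's inequality, and then applies ergodicity. Hence $\varphi$ is constant. The converse direction is immediate, since a nontrivial convex decomposition of $\nu_v$ into $\check\mu_v$-stationary measures produces one of $\mu_v^\mathbb{N}\otimes\nu_v$ into $T_v$-invariant measures.

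The main obstacle is exactly this last step. Because the base shift $\sigma_v$ is one-sided, the ergodic components of $\mu_v^\mathbb{N}\otimes\nu_v$ need not be product measures, so one cannot simply ergodic-decompose the product and project the pieces back down to $\check\mu_v$-stationary measures; it is precisely the reverse-martingale identification of $T_v$-invariant functions with bounded $\check\mu_v$-harmonic functions on $\mc M$ that makes $\check\mu_v$-ergodicity of $\nu_v$ usable. Everything else --- existence of a stationary measure, the extreme-point argument, and the transfer of $T_v$-invariance --- is soft or already contained in Lemma~\ref{L:invariant1}.
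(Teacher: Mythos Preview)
Your argument is correct and follows a genuinely different route from the paper. The paper starts with \emph{any} $\check\mu_v$-stationary measure $\nu_1$, forms the product $\lambda_1=\mu_v^{\mathbb N}\otimes\nu_1$, takes an ergodic component $\lambda_v$ of $\lambda_1$ for $T_v$, and then invokes a result of Morita \cite[Corollary 3.1]{Morita} to conclude that $\lambda_v$ is again a product $\mu_v^{\mathbb N}\otimes\nu_v$; Lemma~\ref{L:invariant1} then gives that $\nu_v$ is $\check\mu_v$-stationary (and ergodicity of $\nu_v$ follows, as you note, from the easy converse). In other words, the paper resolves exactly the obstacle you flag in your final paragraph: over a one-sided Bernoulli base, ergodic components of a product \emph{are} products, by Morita's lemma. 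Your approach instead selects an extreme point downstairs first and then proves ergodicity of the skew product by the harmonic-function/martingale mechanism. Both are standard; yours is more self-contained, while the paper's is shorter once Morita is cited.

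One small imprecision: in your Jensen step you say ``$\Phi^2$ is also harmonic''. A priori $\Phi^2$ is only \emph{sub}harmonic; what makes it harmonic is that $\int \Phi^2\,d\nu_v=\int P_{\check\mu_v}(\Phi^2)\,d\nu_v$ by stationarity of $\nu_v$, which forces $\Phi^2=P_{\check\mu_v}(\Phi^2)$ $\nu_v$-a.e., and hence equality in Jensen. With that clarification, your deduction that $\Phi$ is $\check\mu_v$-invariant $\nu_v$-a.e.\ and therefore $\nu_v$-a.e.\ constant (via the decomposition $d\nu_v=\Phi\,d\nu_v+(1-\Phi)\,d\nu_v$ into stationary pieces) is sound.
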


\begin{proof}
Since $\mc M$ is a compact metric space, there exists a $\check{\mu}_v$-stationary measure $\nu_1$ on $\mc M$. 
Then by Lemma \ref{L:invariant1} the measure $\lambda_1 := \mu_v^\mathbb{N} \otimes \nu_1$ is $T_v$-invariant. 
If $\lambda_1$ is not ergodic, let us consider its ergodic decomposition, and take one of its ergodic components $\lambda_v$. 
By definition, $\lambda_v \ll \lambda_1$ and $\lambda_v$ is $T_v$-invariant and ergodic. 
Then by \cite[Corollary 3.1]{Morita}, $\lambda_v$ is of the form $\lambda_v = \mu_v^\mathbb{N} \otimes \nu_v$ 
for some measure $\nu_v$ on $\mc M$. Finally, again by Lemma \ref{L:invariant1}, the measure $\nu_v$ is $\check{\mu}_v$-stationary. 
\end{proof}

\begin{lemma} \label{lem:sum_bus}
Consider the function $f\colon \Omega \times \overline{X}^h \to \mathbb{R}$ defined as 
$$f(\omega, \xi) := \beta_\xi(o, g_1 o).$$
Then for any $n$ we have 
\begin{equation} \label{E:skew}
\sum_{j = 0}^{n-1} f(T^j(\omega, \xi)) = \beta_{\xi}(o, w_n o).
\end{equation}
\end{lemma}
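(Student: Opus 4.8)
The plan is to prove equation \eqref{E:skew} by a straightforward induction on $n$, using the cocycle property of the Busemann cocycle together with the $G$-equivariance built into the skew product $T$.

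First I would unwind the definitions. For $\omega = (g_1, g_2, \dots)$ and $\xi \in \overline{X}^h$, the skew product acts by $T(\omega, \xi) = (\sigma(\omega), g_1^{-1}\xi)$, so iterating gives $T^j(\omega, \xi) = (\sigma^j(\omega), (g_1 g_2 \cdots g_j)^{-1}\xi) = (\sigma^j \omega, w_j^{-1}\xi)$, where $w_j = g_1 \cdots g_j$ is the length-$j$ prefix (and $w_0$ is the identity). Since the first edge of $\sigma^j(\omega)$ is $g_{j+1}$, the definition of $f$ yields
\[
f(T^j(\omega, \xi)) = \beta_{w_j^{-1}\xi}(o, g_{j+1} o).
\]
The key identity I need is the transformation rule for the Busemann cocycle under the group action: for any $h \in G$ and any $x, y \in X$, one has $\beta_{h^{-1}\xi}(o, go) = \beta_\xi(ho, hgo)$. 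This is exactly the computation already used in the proof that $\eta$ is a cocycle earlier in the paper (the step passing from $\beta_\xi(h^{-1}o, h^{-1}g^{-1}o)$ to $\beta_{h\xi}(o, g^{-1}o)$), so I can cite or reproduce it in one line. Applying it with $h = w_j$ gives
\[
f(T^j(\omega, \xi)) = \beta_\xi(w_j o, w_j g_{j+1} o) = \beta_\xi(w_j o, w_{j+1} o),
\]
using $w_j g_{j+1} = w_{j+1}$.

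Then the sum telescopes via the cocycle property $\beta_\xi(x, z) = \beta_\xi(x, y) + \beta_\xi(y, z)$:
\[
\sum_{j=0}^{n-1} f(T^j(\omega, \xi)) = \sum_{j=0}^{n-1} \beta_\xi(w_j o, w_{j+1} o) = \beta_\xi(w_0 o, w_n o) = \beta_\xi(o, w_n o),
\]
since $w_0$ is the identity. This completes the proof.

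There is no real obstacle here; the only point requiring a moment of care is the equivariance identity $\beta_{h^{-1}\xi}(o, go) = \beta_\xi(ho, hgo)$, which should be stated cleanly (it follows from the definition $\beta_\xi(x,y) = \lim_{z_n \to \xi}[d(y,z_n) - d(x,z_n)]$ together with the fact that $G$ acts by isometries and $h z_n \to h\xi$), and the bookkeeping of indices in $T^j$ so that the first coordinate of $T^j(\omega,\xi)$ is indeed $\sigma^j(\omega)$ with leading edge $g_{j+1}$ and the second coordinate is $w_j^{-1}\xi$. I would present the induction explicitly for clarity rather than the telescoping sum, but either is routine.
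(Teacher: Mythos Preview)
Your proof is correct and follows essentially the same approach as the paper: both use the cocycle property of $\beta_\xi$ to telescope $\beta_\xi(o, w_n o)$ into the sum $\sum_j \beta_\xi(w_j o, w_{j+1} o)$, then invoke $G$-equivariance to identify each term with $\beta_{w_j^{-1}\xi}(o, g_{j+1} o) = f(T^j(\omega,\xi))$. The only difference is the direction in which the computation is written.
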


\begin{proof}
The cocycle property implies 
\[
\beta_{\xi} (o, w_n o) = \sum_{j =0}^{n-1} \beta_{\xi} (w_j o, w_{j+1} o) = \sum_{j = 0}^{n-1} \beta_{w_j^{-1} \xi}(o, g_{j+1} o)
\]
for any $\xi \in \overline{X}^h$. 
Moreover, by definition and $G$-equivariance we have 
$$f(T^j(\omega, \xi)) = \beta_{w_j^{-1} \xi}(o, g_{j+1} o)$$
and the claim follows. 
\end{proof}

An analogous statement holds by replacing $T, \Omega$ by $T_v, \Omega_v$. 

\section{CLT for Markov chains of primitive graph structures}
\label{sec:clt_primitve}

We begin by recalling the following: If $\Gamma$ is a directed graph whose transition matrix $M$ is primitive with leading eigenvalue $\lambda$, then 
\[
\lim_{n \to \infty} \frac{M^n}{\lambda^n} = \rho u^T,
\]
where $M \rho = \lambda \rho$, $u^T M = \lambda u^T$, and $u^T \rho = 1$. Then the stationary measure for the corresponding Markov chain is given by setting the starting probability at vertex $v_i$ as $\pi_i = \rho_i u_i$ and assigning 
to an edge from $v_i$ to $v_j$ the transition probability $p_{ij} = \frac{\rho_j}{\lambda \rho_i}$. 
This gives the measure of maximal entropy $\mathbb{P}$ for the path space $\Omega$ of $\Gamma$, also called the \emph{Parry measure} \cite{Parry}.
For a vertex $v$ of $\Gamma$, we use $\mathbb{P}_v$ to denote the measure on the space of paths $\Omega_v \subset \Omega$ starting at $v$ obtained by beginning the Markov chain at $v$ and using the above transition probabilities. From now on we use these edge probabilities to define 
 the first return measure $\mu_v$ as in Section \ref{S:loop}. 

In this section we prove the following result.

\begin{theorem} \label{T:Markov}
Suppose that $\Gamma$ is a primitive graph structure and let $\mu_n$ be the $n$-th step distribution of the Markov chain on $\Gamma$. There are constants $\ell$ and $\sigma$ such that
 for any continuous function $F \colon \mathbb{R} \to \mathbb{R}$ with compact support, we have 
\[
\lim_{n \to \infty} \int_G F \left( \frac{d(o, go) - n \ell}{\sqrt{n}} \right) \ d \mu_n(g) = \int_{\mathbb{R}} F(t) \ d \mc N_{\sigma}(t). 
\]
\end{theorem}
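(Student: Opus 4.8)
The plan is to deduce Theorem \ref{T:Markov} from the CLT for random walks on the loop semigroup (Corollary \ref{C:CLT-loop}) by viewing paths in the primitive graph $\Gamma$ as a suspension (first-return tower) over the loop space $\Omega_v$ at a fixed vertex $v$, and transferring the CLT from the base to the suspension via the results of Melbourne--T\"or\"ok \cite{MT}. More precisely, fix a vertex $v$; since $\Gamma$ is primitive, $v$ lies in the unique maximal component, the first return measure $\mu_v$ is nondegenerate (using the Parry edge probabilities), and by Lemma \ref{lem:thick_implies}/\cite[Proposition 6.3]{GTT2} the actions of $\Gamma_v$ and $\Gamma_v^{-1}$ on $X$ are nonelementary. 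Lemma \ref{lem:measure} provides an ergodic $\check\mu_v$-stationary measure $\nu_v$ on $\overline{X}^h$ making $\mu_v^{\mathbb N}\otimes\nu_v$ ergodic and $T_v$-invariant, and by \cite[Proposition 4.4]{MT} it is supported on $\overline{X}^h_\infty$.

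\textbf{Step 1: CLT on the loop space.} Apply Corollary \ref{C:CLT-loop} to get $\ell,\sigma\ge 0$ such that, for $\nu_v$-a.e.\ $\xi$,
$\int_G F\big((\beta_\xi(o,go)-\ell\|g\|)/\sqrt n\big)\,d\mu_v^{*n}(g)\to \int F\,d\mc N_\sigma$.
By Lemma \ref{lem:sum_bus} (in the $T_v$ version), $\beta_\xi(o, w_n o) = \sum_{j=0}^{n-1} f(T_v^j(\omega,\xi))$ along a generic orbit, where the ergodic sum is over the observable $f(\omega,\xi)=\beta_\xi(o,l_1 o)$ on $(\Omega_v\times\overline X^h, T_v, \mu_v^{\mathbb N}\otimes\nu_v)$, and $\|w_n\|_\Gamma = \sum$ of the loop lengths. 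So on the base tower we have a CLT for the Birkhoff sums of $f-\ell\,(\text{loop length})$.

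\textbf{Step 2: suspension.} The path space $\Omega_v'$ of \emph{all} finite paths starting at $v$ (equivalently, infinite paths through $v$) is naturally a first-return suspension of $\Omega_v$ with roof function $r = \|\cdot\|_\Gamma$ (the length of the first-return loop); the Parry measure $\mathbb P_v$ on $\Omega$ restricted to paths through $v$ is, up to normalization, the suspension of $\mu_v^{\mathbb N}$ by $r$, and $r$ has exponential (hence all) moments w.r.t.\ $\mu_v$. The skew product $T$ on $\Omega\times\overline X^h$ restricts on the $v$-tower to the suspension of $T_v$. Now invoke \cite[Theorem/Prop on suspensions]{MT}: a CLT for Birkhoff sums of a centerable cocycle over the base transformation, with an integrable (with a moment) roof, lifts to a CLT for the suspension, with the same variance normalization after reparametrizing time $n \mapsto n/\bar r$ where $\bar r = \int r\, d\mu_v$; the Busemann cocycle along the suspension orbit is exactly $\beta_\xi(o, w_n o)$, $w_n$ the length-$n$ prefix in $\Gamma$. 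This yields: for $\mathbb P_v\otimes\nu_v$-a.e.\ point,
$\big(\beta_\xi(o,w_n o) - n\ell'\big)/\sqrt n \to \mc N_{\sigma'}$
for suitable rescaled $\ell',\sigma'$ (I will relabel them $\ell,\sigma$).

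\textbf{Step 3: from Busemann cocycle to displacement, and from path measure to Markov step measure.} Two cleanups remain. First replace $\beta_\xi(o,w_no)$ by $d(o,w_no)$: in a $\delta$-hyperbolic space, $|\beta_\xi(o,x) - d(o,x)| = |h_\xi(x)-d(o,x)|$ is controlled along a generic geodesic ray converging to $\xi$ — one uses that for $\nu_v$-a.e.\ $\xi$ the prefixes $w_n o$ track a ray toward a point of the Gromov boundary and converge (in the horofunction/Gromov sense) so that $d(o,w_no)-\beta_\xi(o,w_no)$ stays bounded along the orbit; hence this difference is $o(\sqrt n)$ a.s.\ and can be absorbed. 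Second, pass from the path-space statement (a.e.\ $\omega$) to the statement for $\mu_n$, the $n$-step distribution of the Markov chain started from its stationary (Parry) initial distribution: the quantity $F((d(o,w_no)-n\ell)/\sqrt n)$ depends only on the length-$n$ prefix $w_n$, whose law under $\mathbb P$ is exactly $\mu_n$ pushed to $G$; integrating the a.e.\ convergence against $\mathbb P$ and using boundedness of $F$ (dominated convergence) gives $\int_G F((d(o,go)-n\ell)/\sqrt n)\,d\mu_n(g)\to\int F\,d\mc N_\sigma$, which is the claim.

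\textbf{Main obstacle.} The delicate point is Step 2: correctly applying the Melbourne--T\"or\"ok suspension/skew-product machinery in the non-compact setting where the fiber $\overline X^h$ carries only a stationary (not invariant) measure, so that one is really dealing with the skew product $T_v$ and its suspension rather than an ordinary suspension flow — one must check the hypotheses of \cite{MT} (centerability, finite second moment of the cocycle, integrability and the relevant moment of the roof function $r$, and ergodicity of $\mu_v^{\mathbb N}\otimes\nu_v$, which is Lemma \ref{lem:measure}) and verify that the degenerate case $\sigma=0$ is preserved under rescaling. A secondary but routine obstacle is the hyperbolic geometry estimate in Step 3 showing $d(o,w_no)-\beta_\xi(o,w_no)$ is bounded (or at least $o(\sqrt n)$) along a.e.\ orbit, which follows from nonelementarity (the prefixes make definite progress, so they converge to the boundary and fellow-travel a ray representing $\xi$).
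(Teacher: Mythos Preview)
Your overall strategy matches the paper's: apply Corollary \ref{C:CLT-loop} on the loop semigroup, lift via a Melbourne--T\"or\"ok suspension to obtain a CLT for the Busemann cocycle on the full path space with the measure $\overline{\nu}$, then replace the Busemann cocycle by displacement. However, Step 3 contains a genuine error, and Step 2 glosses over a nontrivial identification.

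\smallskip

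\textbf{The error in Step 3.} Your justification for bounding $d(o,w_no)-\beta_\xi(o,w_no)$ is backwards. You write that the prefixes $w_n o$ ``converge to the boundary and fellow-travel a ray representing $\xi$''. But $\xi$ is a \emph{fixed} horofunction (sampled from $\nu_v$, independently of $\omega$), while $w_n o$ converges to a \emph{random} boundary point depending on $\omega$; there is no reason these should coincide. In fact, in a $\delta$-hyperbolic space
\[
d(o,go)-\beta_\xi(o,go) = 2\,(go,\pi(\xi))_o + O(\delta),
\]
so if $w_n o$ did fellow-travel a ray to $\pi(\xi)$ this Gromov product would tend to $+\infty$ and the difference would be unbounded, not bounded. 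The correct argument (Lemma \ref{L:indep} in the paper) is precisely the opposite: because the hitting measure of the Markov chain on $\partial X$ is non-atomic, for any fixed $\xi\in\overline{X}^h_\infty$ the probability that $\sup_{n\ge 1}(w_n o,\pi(\xi))_o$ exceeds $T$ can be made smaller than $\epsilon$ by taking $T$ large, \emph{uniformly in $\xi$}. This uniformity is essential, since one must then integrate against $\overline{\nu}$ (which involves $\nu_v$ in the fiber).

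\smallskip

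\textbf{The gap in Step 2.} You assert that the suspended measure on $\Omega$ is ``up to normalization'' $\mathbb P_v$, and later that the law of $w_n$ under $\mathbb P$ is $\mu_n$. But the suspension is built from loops at a single vertex $v$, so its pushforward to $\Omega$ is a priori a measure supported on paths that start at arbitrary vertices (the path $\sigma^k\omega$ starts wherever the $k$th step of the first loop lands), and one must check that the resulting marginal is exactly the Parry measure $\mathbb P$ with the stationary initial distribution $(\pi_w)$. The paper does this in two steps: first it computes $\overline{\nu}=\frac{1}{R}\sum_w \mathbb P_w\otimes\nu_w$, and then Lemma \ref{lem:better_work} shows $\nu_w(\overline X^h)/R=\pi_w$ via a reversibility argument on the graph. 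Without this identification your final dominated-convergence step does not produce an integral against $\mu_n$.
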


The main technique to obtain the CLT for the Markov chain as above from the one from the random walk on the loop semigroup 
is using a \emph{suspension flow}, adapting the approach of Melbourne-T\"or\"ok \cite{Mel-Tor} for dynamical systems.

\subsection{Suspension flows}

Let $S \colon (\mathcal{X}, \lambda) \to (\mathcal{X}, \lambda)$ be a measure-preserving dynamical system, and let $r \colon \mathcal{X} \to \mathbb{N}$
be a measurable, integrable function, which we call the \emph{roof function}. Then the \emph{discrete suspension flow} of $S$ with roof function $r$ is the dynamical 
system given by the map $\widehat{S} : \widehat{\mathcal{X}} \to \widehat{\mathcal{X}}$ where 
$$\widehat{\mathcal{X}} := \{ (x, n) \in \mathcal{X} \times \mathbb{N} \ : \ 0 \leq n \leq r(x) - 1\}$$
with measure $\widehat{\lambda} := \frac{1}{\overline{r}} \left( \lambda \otimes \delta \right)$, where $\delta$ is the counting measure on $\mathbb{N}$ and 
$\overline{r} := \int_{\mathcal{X}} r \ d\lambda$.
Then, the map $\widehat{S}$ is defined as 
$$\widehat{S}(x, n) = \left \{ \begin{array}{ll} 
(x, n+1) & \textup{if }n \leq r(x) - 2 \\
(S(x), 0) & \textup{if }n = r(x) - 1.
\end{array} \right.$$

Since in this case the system has discrete time, the above construction is also called a \emph{Kakutani skyscraper}.

The main theorem of Melbourne-T\"orok \cite[Theorem 1.1]{Mel-Tor} is the following.

\begin{theorem}  \label{T:suspend}
Let $S\colon (\mathcal{X}, \lambda) \to (\mathcal{X}, \lambda)$ be an ergodic, measure-preserving transformation, and let 
$\widehat{S} \colon (\widehat{\mathcal{X}}, \widehat{\lambda}) \to  (\widehat{\mathcal{X}}, \widehat{\lambda})$ be the suspension flow with roof function $r$.
Let $\phi \colon \widehat{\mathcal{X}} \to \mathbb{R}$ be such that $\int \phi \ d\widehat{\lambda} = 0$, and define $\Phi(x) := \sum_{k = 0}^{r(x) -1} \phi(x, k)$. 
Let $\phi \in L^b(\widehat{\mathcal{X}})$ and let $r \in L^a(\mathcal{X})$ be the roof function, with $(1 - 1/a)(1-1/b) \geq 1/2$.
Suppose that $\Phi$ and $r$ satisfy a CLT. Then $\phi$ satisfies a CLT. 

Moreover, if the CLT for $\Phi$ has variance $\sigma_1^2$, then the CLT for $\phi$ has variance $\sigma^2 = \frac{\sigma_1^2}{\overline{r}}$.
\end{theorem}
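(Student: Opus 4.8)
The final statement to prove is Theorem \ref{T:Markov}: given a primitive graph structure $\Gamma$ with Parry measure, the pushforward under evaluation of the $n$-th step distribution of the Markov chain satisfies a CLT for the displacement $d(o,go)$. The plan is to realize $(\Omega, \mathbb{P})$ as a discrete suspension (Kakutani skyscraper) over the loop system $(\Omega_v, \mu_v^\mathbb{N})$, transport the CLT from Corollary \ref{C:CLT-loop} to a CLT for the Birkhoff sums $\beta_\xi(o, w_n o)$ on the skew product via Theorem \ref{T:suspend}, then remove the horofunction variable and replace the Busemann cocycle by the honest distance, and finally remove the conditioning on a starting vertex using primitivity.

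\medskip

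\noindent\textbf{Step 1: set up the suspension.} Fix a vertex $v$ in the (unique, by primitivity) maximal component. Choose the base $\mathcal{X} := \Omega_v \times \overline{X}^h$ with the $T_v$-invariant ergodic measure $\lambda := \mu_v^\mathbb{N} \otimes \nu_v$ from Lemma \ref{lem:measure}, where $\nu_v$ is the ergodic $\check\mu_v$-stationary measure; ergodicity of $\lambda$ is exactly what that lemma provides, and $\nu_v(\overline{X}^h_\infty) = 1$ as in the proof of Corollary \ref{C:CLT-loop}. Take the roof function $r(\omega,\xi) := \Vert l_1 \Vert$, the $\Gamma$-length of the first prime loop $l_1$ of $\omega \in \Omega_v$; it is integrable for the Parry measure since the first-return time to $v$ has exponential tails (standard for primitive Markov chains, via Perron--Frobenius). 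The suspension $\widehat{\mathcal{X}}$ is then naturally identified with $\Omega_v' \times \overline{X}^h$, where $\Omega_v'$ is the space of infinite $\Gamma$-paths that pass through $v$ infinitely often and start somewhere inside a prime loop through $v$; up to a $\mathbb{P}$-null set of paths that eventually avoid $v$, this recovers $(\Omega, \mathbb{P}, \sigma, T)$ restricted to the maximal component, with $\widehat S = T$ and $\widehat\lambda$ the (normalized) Parry-measure-times-$\nu_v$ skew product. The key identification is that the suspension measure $\widehat\lambda = \frac{1}{\bar r}(\lambda \otimes \delta)$ coincides with $\mathbb{P} \otimes \nu_v$: this is a direct computation with the Parry transition probabilities $p_{ij} = \rho_j/(\lambda \rho_i)$, using that $\bar r$ equals the reciprocal of the stationary mass at $v$.

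\medskip

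\noindent\textbf{Step 2: apply the Melbourne--T\"or\"ok theorem.} Let $\phi(\omega,\xi) := \beta_\xi(o, g_1 o) - \ell$, where $g_1$ is the first \emph{edge} of the path in $\widehat{\mathcal{X}} = \Omega \times \overline{X}^h$ and $\ell$ is the drift constant from Corollary \ref{C:CLT-loop}, chosen so that $\int \phi \, d\widehat\lambda = 0$ (this is exactly the normalization $\lambda = 0$ in that corollary, transported through Step 1). By Lemma \ref{lem:sum_bus}, $\sum_{j=0}^{n-1}\phi(T^j(\omega,\xi)) = \beta_\xi(o, w_n o) - n\ell$, and the induced observable $\Phi(\omega,\xi) = \sum_{k=0}^{r-1}\phi(T_v^k(\cdot))$ on the base is precisely $\beta_\xi(o, l_1 o) - \ell \Vert l_1\Vert$, whose Birkhoff sums under $T_v$ are $\beta_\xi(o, (l_1\cdots l_n) o) - \ell\Vert l_1 \cdots l_n \Vert$ — so $\Phi$ satisfies a CLT by Corollary \ref{C:CLT-loop} (for $\nu_v$-a.e. $\xi$, and then for $\widehat\lambda$-a.e. point, using ergodicity to upgrade the a.e. statement and that $\Phi$ depends measurably on the base point). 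The roof $r$ satisfies a CLT because it has exponential moments, so it lies in $L^a$ for all $a$, and $\phi$ is bounded hence in $L^b$ for all $b$; the moment condition $(1-1/a)(1-1/b)\geq 1/2$ holds. Theorem \ref{T:suspend} then yields a CLT for $\phi$ with respect to $\widehat{S} = T$ and $\widehat\lambda = \mathbb{P}\otimes\nu_v$, with variance $\sigma^2 = \sigma_1^2/\bar r$: for a.e. $\xi$,
\[
\frac{1}{\#S_n^\Gamma}\text{-averaged over paths of length }n\text{ through }v: \quad \frac{\beta_\xi(o, w_n o) - n\ell}{\sqrt n} \longrightarrow \mathcal{N}_\sigma.
\]

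\medskip

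\noindent\textbf{Step 3: pass from $\beta_\xi$ to $d(o,\cdot)$ and from path-counting to the Markov chain, then remove conditioning.} First, $|\beta_\xi(o, w_n o) - d(o, w_n o)|$ need not be bounded in general, but one shows it is negligible after dividing by $\sqrt n$: in a $\delta$-hyperbolic space, for a generic geodesic ray, the Busemann function along the ray agrees with the distance up to $o(\sqrt n)$ — more precisely, since $w_n o$ tracks a ray converging to a point $\zeta \in \partial X$ and, by a standard argument, $\beta_\xi(o,w_n o) = d(o, w_n o) - 2(\zeta \mid \xi)_{w_n o} + O(\delta)$, and the Gromov product term is controlled for $\nu_v\times\nu_v$-a.e. pair $(\xi,\zeta)$; integrating over a second independent stationary sample and using a Fubini/diagonal argument lets us replace $\beta_\xi(o, w_n o)$ by $d(o, w_n o)$ without changing the limit law. (This is the standard "from cocycle CLT to displacement CLT" step in Benoist--Quint/Horbez, and I expect it to be the main obstacle — controlling the error uniformly enough to conclude convergence in distribution, not just tightness.) Second, the measure $\widehat\lambda = \mathbb{P}_v \otimes \nu_v$ pushed to the first $n$ coordinates gives exactly the $n$-step distribution $\mu_n$ of the Markov chain started at $v$ (after integrating out $\xi$, which the statement of the CLT for $d(o,\cdot)$ permits since $d(o,w_n o)$ does not involve $\xi$), so we get Theorem \ref{T:Markov} for the chain started at $v$. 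Finally, for an arbitrary starting vertex $v'$: by primitivity every vertex communicates with $v$, so a path of length $n$ from $v'$ decomposes as a bounded-length prefix reaching $v$ followed by a length-$(n-O(1))$ path from $v$; the prefix contributes $O(1)$ to the displacement, and conditioning on the finitely many prefixes, each conditional distribution converges to $\mathcal{N}_\sigma$ by the case just proved, so their convex combination does too. This gives the theorem for $\mu_n = $ the $n$-step distribution from $v_0$, with $\ell, \sigma$ independent of the starting vertex.
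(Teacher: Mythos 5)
Your proposal does not prove the statement at hand. The statement is Theorem \ref{T:suspend} itself, i.e.\ the Melbourne--T\"or\"ok suspension theorem: if the induced observable $\Phi$ on the base and the roof function $r$ each satisfy a CLT, then the observable $\phi$ on the discrete suspension satisfies a CLT, with variance $\sigma^2=\sigma_1^2/\overline r$. You instead outline a proof of Theorem \ref{T:Markov}, and in your Step 2 you explicitly invoke Theorem \ref{T:suspend} as a black box (``Theorem \ref{T:suspend} then yields a CLT for $\phi$\dots''), so with respect to the statement you were asked to prove the argument is circular: the result is assumed, not established. (In the paper this theorem is quoted from \cite{Mel-Tor} without proof, so a blind attempt was expected to supply the suspension argument itself; none of it appears in your write-up, whose content belongs to the later sections of the paper where the theorem is applied.)

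What is missing, concretely, is the entire suspension mechanism. One must relate the time-$n$ Birkhoff sum of $\phi$ under $\widehat S$ to the Birkhoff sum of $\Phi$ under $S$ up to the lap number $N_n(x)$, the number of completed excursions over the base by time $n$: $\sum_{j=0}^{n-1}\phi\circ\widehat S^j = \sum_{k=0}^{N_n-1}\Phi\circ S^k + (\text{boundary terms})$. The law of large numbers and the CLT for $r$ give $N_n = n/\overline r + O(\sqrt n)$ in probability, and one then needs an Anscombe-type random-index argument to pass from the deterministic index $\lfloor n/\overline r\rfloor$ to $N_n$ in the CLT for $\Phi$, together with maximal-inequality or moment estimates --- this is precisely where the hypotheses $\phi\in L^b$, $r\in L^a$ with $(1-1/a)(1-1/b)\ge 1/2$ enter --- to show that the boundary terms and the fluctuations of the $\Phi$-partial sums over a window of length $O(\sqrt n)$ are $o(\sqrt n)$ in probability. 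The time change $n\mapsto n/\overline r$ is also exactly what produces the variance rescaling $\sigma^2=\sigma_1^2/\overline r$, which you assert but never derive. As a proof of Theorem \ref{T:suspend}, the proposal is therefore empty; the material you do present (identification of the Parry measure with a Kakutani skyscraper over the loop system, the passage from the Busemann cocycle to displacement, and the change of starting vertex) is the content of the paper's Section 4, not of the cited suspension theorem.
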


\subsection{Invariant measure on the suspended space}
For any $\omega \in \Omega_v$, let $r(\omega)$ be the length in $\Gamma$ of $l_1(\omega)$. 
This is the first return time for the loop determined by $\omega$.
Let us define the suspension of the skew product
$$\Omega^{(s)} := \{ (\omega, k, \xi) \in \Omega_v \times \mathbb{N} \times \mc M \ : \ 0 \leq k \leq r(\omega) - 1\}$$
and 
$$\widehat{T}(\omega, k , \xi)  = \left\{ \begin{array}{ll} 
(\omega, k+1, \xi) & \textup{if }k \leq r(\omega) - 2 \\
(\sigma_v(\omega), 0, l^{-1}_1 \xi) & \textup{if } k = r(\omega) - 1.
\end{array} \right.$$
Let us now denote $R := \int_{\Gamma_v} \Vert g \Vert \ d \mu_v(g) = \int r(\omega) \ d \mb P_v(\omega)$ and define the probability measure $\nu^{(s)} := \frac{1}{R}\left( \mu_v^\mathbb{N} \otimes \delta \otimes \nu_v \right)$ on $\Omega^{(s)}$.

\begin{lemma}
Let $\nu_v$ be $\check{\mu}_v$-stationary measure constructed in Lemma \ref{lem:measure}. Then $\nu^{(s)}$ 
on $\Omega^{(s)}$ is $\widehat{T}$-invariant and ergodic.
\end{lemma}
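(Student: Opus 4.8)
The plan is to deduce both the invariance and the ergodicity of $\nu^{(s)}$ from the corresponding facts for the base map $T_v$ on $(\Omega_v \times \mc M, \mu_v^\mathbb{N} \otimes \nu_v)$, using the standard correspondence between a measure-preserving transformation and its discrete suspension (Kakutani skyscraper). First I would recall from Lemma \ref{lem:measure} that $T_v$ preserves the ergodic probability measure $\mu_v^\mathbb{N} \otimes \nu_v$ on $\Omega_v \times \mc M$. The roof function here is $r(\omega) = \Vert l_1(\omega)\Vert$, which depends only on the first coordinate $\omega$ and is integrable with $\int r \ d(\mu_v^\mathbb{N}\otimes\nu_v) = \int r \ d\mb P_v = R < \infty$ (finiteness of $R$ is where thickness/the structure of maximal components enters, but this is already implicit in the setup). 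Thus $\widehat T$ is precisely the discrete suspension flow $\widehat{S}$ of $S = T_v$ with roof $r$ in the sense of Section 4.2, and $\nu^{(s)} = \frac{1}{R}(\mu_v^\mathbb{N} \otimes \delta \otimes \nu_v)$ is exactly the measure $\widehat{\lambda}$ attached to that construction, with $\overline r = R$.

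Given this identification, invariance of $\nu^{(s)}$ under $\widehat T$ is a general and essentially formal fact about Kakutani skyscrapers: on the ``interior'' levels $\widehat T$ is the shift $(\omega,k,\xi)\mapsto(\omega,k+1,\xi)$, which preserves $\mu_v^\mathbb{N}\otimes\delta\otimes\nu_v$ trivially, and on the ``top'' level it applies $T_v$ in the first and last coordinates and resets $k$ to $0$; summing over levels and using $T_v$-invariance of $\mu_v^\mathbb{N}\otimes\nu_v$ gives that the total mass of $\widehat T^{-1}(A\times\{k\}\times B)$ matches that of $A\times\{k\}\times B$ for cylinder sets, hence for all measurable sets. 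I would either cite this as the standard property of the suspension construction recalled just above (Melbourne–T\"or\"ok's framework, \cite{Mel-Tor}), or spell out the one-line check on generating sets of the form $C\times\{k\}\times A$ with $C\subset\Omega_v$ a cylinder.

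For ergodicity, the standard fact is that the suspension $\widehat S$ over an ergodic base $S$ with an integrable roof is again ergodic: any $\widehat T$-invariant set $\widehat A\subseteq\Omega^{(s)}$ restricts on the base level $k=0$ to a set $A_0\subseteq\Omega_v\times\mc M$, and the return map of $\widehat T$ to the base level $\{k=0\}$ is exactly $T_v$ (this is the defining feature of the skyscraper), so $A_0$ is $T_v$-invariant, hence has measure $0$ or $1$ by ergodicity of $\mu_v^\mathbb{N}\otimes\nu_v$; then invariance of $\widehat A$ under $\widehat T$ propagates this to all levels, since every point of $\Omega^{(s)}$ eventually maps into level $0$ and comes from level $0$. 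I would present this as the main point of the proof, quoting ergodicity of $\mu_v^\mathbb{N}\otimes\nu_v$ from Lemma \ref{lem:measure} and the elementary ergodicity-of-suspensions lemma.

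The only genuine subtlety, and the step I expect to need the most care, is making sure the hypotheses of the suspension machinery are met: namely that $r$ is integrable (equivalently $R<\infty$), so that $\nu^{(s)}$ is indeed a probability measure and the skyscraper is well-defined with finite measure. This reduces to finiteness of $R = \int_{\Gamma_v}\Vert g\Vert\ d\mu_v(g)$, which holds because $\mu_v$ is the first-return measure of the Parry measure on a primitive graph structure, where return times have exponential tails; I would note this and refer back to the construction of $\mu_v$ and $\mb P_v$ at the start of Section \ref{sec:clt_primitve}. With integrability in hand, everything else is the formal Kakutani correspondence, and the lemma follows.
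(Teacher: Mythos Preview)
Your proposal is correct and follows essentially the same approach as the paper: the paper verifies invariance by the explicit computation on cylinder sets $C_{l_1,\dots,l_n}\times\{k\}\times A$ that you outline (the $k=0$ case reducing to $\check\mu_v$-stationarity of $\nu_v$), and then cites the standard fact that suspensions of ergodic systems are ergodic (referring to \cite[Proposition 1.11]{Sarig}) rather than spelling out the level-$0$ return-map argument you sketch. Your remark on integrability of $r$ is well taken; the paper does not address it here but invokes the exponential tail of $r$ later in the proof of Proposition \ref{P:MT1}.
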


\begin{proof}
It suffices to check invariance of the measure using cylinder sets $C_{l_1, \dots, l_n}$ consisting of loops beginning with $l_1\ldots l_n$.
We have
$$\widehat{T}^{-1}(C_{l_1, \dots, l_n} \times \{ k \} \times A) = \left\{ \begin{array}{ll} 
C_{l_1, \dots, l_n} \times \{ k-1 \} \times A & \textup{if }k > 0\\
\bigsqcup_{l \in P_v}  C_{l, l_1, \dots, l_n} \times \{ \Vert l \Vert -1 \} \times l A & \textup{if }k = 0\\
\end{array}
\right.$$
where $P_v \subseteq \Gamma_v$ is the set of prime loops. Hence in the first case, the equality 
$$\nu^{(s)}(\widehat{T}^{-1}(C_{l_1, \dots, l_n} \times \{ k \} \times A))  =  \nu^{(s)}(C_{l_1, \dots, l_n} \times \{ k \} \times A)$$ 
is obvious. In the second case, 
\begin{align*}
\nu^{(s)}(\widehat{T}^{-1}(C_{l_1, \dots, l_n} \times \{ k \} \times A)) & = \frac{1}{R} \sum_{l \in P_v} \mu_v(l) \mu_v(l_1)\dots \mu_v(l_n) \nu_v(lA) \\
& =  \frac{1}{R} \mu_v(l_1)\dots \mu_v(l_n) \sum_{l \in P_v} \mu_v(l) \nu_v(lA) \\
& = \frac{1}{R} \mu_v(l_1)\dots \mu_v(l_n) \nu_v(A) \\
& = \nu^{(s)}(C_{l_1, \dots, l_n} \times \{ k \} \times A)
\end{align*}
hence $\nu^{(s)}$ is $\widehat{T}$-invariant. 
Moreover, the suspension of an ergodic measure is ergodic, see e.g. \cite[Proposition 1.11]{Sarig}. 
\end{proof}

\subsection{Pushforward of the $\widehat{T}$-invariant measure to $\Omega \times \mc M$}

Recall that $\Omega$ is the space of all infinite sample paths in $\Gamma$ starting at any vertex. 
Let us define the projection $\pi : \Omega^{(s)} \to \Omega \times \mc M$ as 
$$\pi(\omega, k, \xi) = (\sigma^k(\omega), (g_1\dots g_k)^{-1} \xi)$$
and recall the skew product $T: \Omega \times \mc M \to \Omega \times \mc M$ is 
$$T(\omega, \xi) := (\sigma(\omega), g_1^{-1} \xi).$$ 

\begin{lemma} \label{lem:commute}
The following diagram commutes: 
$$\xymatrix{(\Omega^{(s)}, \nu^{(s)}) \ar@(ul,ur)^{\widehat{T}} \ar^{\pi}[r] \ar[d] & \Omega \times \mc M \ar@(ul,ur)^{T} \\
(\Omega_v \times \mc M, \mu_v^\mathbb{N} \otimes \nu_v)}$$
As a consequence, in the hypotheses of the previous lemmas, the measure $\overline{\nu} := \pi_\star \nu^{(s)}$ is $T$-invariant and ergodic. 
\end{lemma}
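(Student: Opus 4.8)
The goal is to verify that $\pi \circ \widehat T = T \circ \pi$ as maps $\Omega^{(s)} \to \Omega \times \mc M$, and then to deduce that $\overline\nu := \pi_\star\nu^{(s)}$ is $T$-invariant and ergodic. The plan is to check the intertwining relation directly on a point $(\omega,k,\xi)\in\Omega^{(s)}$, splitting into the two cases in the definition of $\widehat T$; the bottom map $\Omega^{(s)}\to\Omega_v\times\mc M$ in the diagram is the obvious projection $(\omega,k,\xi)\mapsto(\omega,l_1^{-1}\xi)$ and will only be used to invoke the previous lemma identifying $\nu^{(s)}$ (equivalently $\mu_v^{\mathbb N}\otimes\nu_v$) as $\widehat T$-invariant (resp.\ $T_v$-invariant) and ergodic.

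\emph{Case $k \le r(\omega)-2$.} Here $\widehat T(\omega,k,\xi)=(\omega,k+1,\xi)$, so
\[
\pi(\widehat T(\omega,k,\xi)) = \pi(\omega,k+1,\xi) = \bigl(\sigma^{k+1}(\omega),\,(g_1\cdots g_{k+1})^{-1}\xi\bigr).
\]
On the other hand $\pi(\omega,k,\xi)=(\sigma^k(\omega),(g_1\cdots g_k)^{-1}\xi)$, and applying $T$ shifts once more and peels off the next edge, which is exactly $g_{k+1}$ since $k+1\le r(\omega)-1$ still lies within the first loop $l_1=g_1\cdots g_{r(\omega)}$; this gives the same pair. \emph{Case $k=r(\omega)-1$.} Now $\widehat T(\omega,k,\xi)=(\sigma_v(\omega),0,l_1^{-1}\xi)$, whose image under $\pi$ is $(\sigma_v(\omega),\,l_1^{-1}\xi)$ (the $k=0$ term of $\pi$ is the identity in the first coordinate once we remember $\sigma_v$ drops the whole loop $l_1$, i.e.\ $\sigma^{r(\omega)}\omega$ read as an element of $\Omega_v$, and $l_1=g_1\cdots g_{r(\omega)}$). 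Comparing with $T(\pi(\omega,r(\omega)-1,\xi)) = T\bigl(\sigma^{r(\omega)-1}\omega,\,(g_1\cdots g_{r(\omega)-1})^{-1}\xi\bigr) = \bigl(\sigma^{r(\omega)}\omega,\, g_{r(\omega)}^{-1}(g_1\cdots g_{r(\omega)-1})^{-1}\xi\bigr) = \bigl(\sigma^{r(\omega)}\omega,\,l_1^{-1}\xi\bigr)$ shows the two agree. Hence the diagram commutes.

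\emph{Consequences.} Since $\pi$ conjugates $\widehat T$ to $T$ and $\nu^{(s)}$ is $\widehat T$-invariant by the preceding lemma, $\overline\nu=\pi_\star\nu^{(s)}$ is $T$-invariant: for measurable $A\subseteq\Omega\times\mc M$,
\[
\overline\nu(T^{-1}A) = \nu^{(s)}(\pi^{-1}T^{-1}A) = \nu^{(s)}(\widehat T^{-1}\pi^{-1}A) = \nu^{(s)}(\pi^{-1}A) = \overline\nu(A).
\]
Ergodicity is likewise preserved under a measure-theoretic factor map: if $B\subseteq\Omega\times\mc M$ is $T$-invariant, then $\pi^{-1}B$ is $\widehat T$-invariant, so $\nu^{(s)}(\pi^{-1}B)\in\{0,1\}$ by ergodicity of $\nu^{(s)}$, whence $\overline\nu(B)\in\{0,1\}$.

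\emph{Main obstacle.} The only genuinely delicate point is bookkeeping: one must be careful that the "loop" coordinate description of $\Omega_v$ (sequences of prime loops) and the "edge" coordinate description of $\Omega$ (sequences of single edges) are correctly matched, so that $l_1$ truly equals $g_1\cdots g_{r(\omega)}$ and $\sigma_v(\omega)$ corresponds to $\sigma^{r(\omega)}(\omega)$ under the natural inclusion $\Omega_v\hookrightarrow\Omega$. Once this identification is pinned down, both cases are immediate, and the invariance/ergodicity transfer is the standard factor-map argument.
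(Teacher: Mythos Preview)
Your proof is correct and follows essentially the same approach as the paper: verify $\pi\circ\widehat T = T\circ\pi$ pointwise (the paper only writes out the $k=r(\omega)-1$ case, calling the other ``trivial''), then deduce invariance and ergodicity of $\overline\nu$ by the standard factor-map argument. Your write-up is simply more detailed; the only minor oddity is your description of the downward arrow as $(\omega,k,\xi)\mapsto(\omega,l_1^{-1}\xi)$, which should just be $(\omega,k,\xi)\mapsto(\omega,\xi)$, but as you note this map plays no role in the argument.
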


\begin{proof}
We show that the horizontal arrow is equivariant for the shifts. This follows from the fact that if we write $l_1(\omega)$ for the first return loop of $\omega$ then $l_1(\omega) = g_1(\omega)\ldots g_{r(\omega)}(\omega)$. Hence,  
\begin{align*}
\pi \circ \widehat T (\omega, r(\omega) -1, \xi) &=  \pi ((\sigma_v(\omega),0, l_1^{-1}\xi))\\
&=  (\sigma^{r(\omega)}(\omega), (g_1\ldots g_{r(\omega)})^{-1}\xi),
\end{align*}
which is equal to $T \circ \pi ((\omega, r(\omega) -1, \xi))$. The other cases being trivial, this proves the first statement. 

Finally, since $\overline \nu$ is the pushforward of an ergodic measure, it is ergodic.
\end{proof}

\subsection{Return times and invariant measures for the Markov chain}

Recall that in the previous section we produced a measure $\nu_v$ on $\mc M$ which is $\check{\mu}_v$-stationary and such 
that the product measure $\mu_v^\mathbb{N} \otimes \nu_v$ is $T_v$-invariant and ergodic. Then, by lifting it to the suspension and 
pushing it forward to $\Omega \times \mc M$, we have an ergodic, $T$-invariant measure $\overline{\nu}$ on $\Omega \times \mc M$.

Now, for any vertex $w$ other than $v$ we define the measure $\nu_w$ on $\mc M$ as 
\begin{equation}
\nu_w(A) = \sum_{\gamma \in \Gamma_{v, w}} \mu(\gamma) \nu_v(\gamma A)
\end{equation}
where the sum is over the set $\Gamma_{v, w}$ of all paths $\gamma$ from $v$ to $w$ which do not pass through $v$ in their middle, 
and $\mu(\gamma)$ is the product of the measures of the edges of $\gamma$. Recall also we denote as $\mathbb{P}_w$ 
the Markov chain measure on the space of infinite sample paths starting at $w$.

\begin{lemma}
We have 
$$\overline{\nu} := \frac{1}{R} \sum_w \mathbb{P}_w \otimes \nu_w.$$
\end{lemma}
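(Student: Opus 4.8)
The plan is to compute the pushforward $\overline{\nu} = \pi_\star \nu^{(s)}$ explicitly by unwinding the definition of $\pi$ and tracking where mass lands. Recall $\nu^{(s)} = \frac{1}{R}(\mu_v^\mathbb{N} \otimes \delta \otimes \nu_v)$ is supported on triples $(\omega, k, \xi)$ with $\omega \in \Omega_v$, $0 \le k \le r(\omega)-1$, and that $\pi(\omega, k, \xi) = (\sigma^k(\omega), (g_1 \dots g_k)^{-1}\xi)$, where $g_1 \dots g_{r(\omega)} = l_1(\omega)$ is the first return loop. First I would decompose $\Omega_v$ according to the first loop $l_1 = l$: the measure $\mu_v^\mathbb{N}$ restricted to $\{l_1 = l\}$ has mass $\mu_v(l)$, and conditionally the rest of the sequence $(l_2, l_3, \dots)$ is again distributed as $\mu_v^\mathbb{N}$ on $\Omega_v$. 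Applying $\sigma^k$ for $0 \le k \le \Vert l \Vert - 1$ to such an $\omega$ produces an infinite path in $\Gamma$ starting at the $k$-th vertex $w$ along the loop $l$; concatenating the tail of $l$ after position $k$ with the remaining loops $l_2, l_3, \dots$ gives exactly the Markov chain $\mathbb{P}_w$ started at $w$ (this is where primitivity and the definition of $\mathbb{P}_w$ via the Parry edge probabilities $p_{ij} = \rho_j/(\lambda \rho_i)$ must be invoked, since $\mu_v$ is built from these very probabilities).

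Next I would handle the $\mc M$-coordinate: under $\pi$, the fiber point $\xi$ (distributed $\nu_v$) is sent to $(g_1 \dots g_k)^{-1}\xi$, i.e. to $\gamma^{-1}\xi$ where $\gamma = g_1 \dots g_k$ is precisely a path from $v$ to $w$ that does not revisit $v$ in its interior (since it is a proper prefix of a prime loop at $v$). So the measure on $\mc M$ picked up at vertex $w$ is $\sum_{\gamma \in \Gamma_{v,w}} (\text{weight of } \gamma)\, (\gamma^{-1})_\star \nu_v$, and the weight of $\gamma$ coming from $\nu^{(s)}$ is $\frac{1}{R}\mu_v(l)$ where $l$ is the prime loop of which $\gamma$ is the length-$k$ prefix — but this is just $\frac{1}{R}\mu(\gamma) \cdot (\text{mass of loop-completions})$, which after summing over all completions collapses to $\frac{1}{R}\mu(\gamma)$. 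Then comparing with the definition $\nu_w(A) = \sum_{\gamma \in \Gamma_{v,w}} \mu(\gamma)\nu_v(\gamma A) = \sum_{\gamma} \mu(\gamma) (\gamma^{-1})_\star\nu_v(A)$, I get that the $\mc M$-marginal attached to the $\mathbb{P}_w$-part of $\overline\nu$ is exactly $\frac{1}{R}\nu_w$. Assembling over all $w$ — noting that every vertex in the maximal component of $v$ occurs as some interior vertex of some prime loop, so the sum $\sum_w$ ranges over the correct vertex set — yields $\overline\nu = \frac{1}{R}\sum_w \mathbb{P}_w \otimes \nu_w$.

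The main obstacle I anticipate is the bookkeeping in the second case of the decomposition: one must verify that the map sending $(\omega, k) \mapsto$ (path in $\Omega$ starting at $w$) is, after pushing forward $\mu_v^\mathbb{N}\otimes\delta$, literally the Parry measure $\mathbb{P}_w$ and not merely something equivalent to it. This requires checking that the "loop-level" measure $\mu_v$ disintegrates correctly into edge-level Parry probabilities along the prefix $\gamma$ and along the tail loops, i.e. that $\mu_v(l) = \prod (\text{edge probabilities along } l)$ and that these recombine to give $\mathbb{P}_w$ of each cylinder. A clean way to do this is to check the identity on cylinder sets $C_{e_1, \dots, e_m} \times A$ in $\Omega \times \mc M$ — expand the left side $\overline\nu(C_{e_1,\dots,e_m}\times A) = \nu^{(s)}(\pi^{-1}(C_{e_1,\dots,e_m}\times A))$ directly — rather than arguing at the level of the full measures. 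The remaining verifications (that $k$ ranges correctly, that every $\gamma \in \Gamma_{v,w}$ arises exactly once, that the normalization $\frac{1}{R}$ with $R = \int r \, d\mathbb{P}_v$ matches) are then routine, and ergodicity of $\overline\nu$ is already established in Lemma \ref{lem:commute}.
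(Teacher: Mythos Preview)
Your proposal is correct, and the ``clean way'' you identify at the end---computing $\nu^{(s)}(\pi^{-1}(C_{g_1,\dots,g_n}\times A))$ directly on cylinder sets---is exactly the paper's proof: one writes $\pi^{-1}(C_{g_1,\dots,g_n}\times A) = \bigsqcup_{\gamma\in\Gamma_{v,w}} C_{\gamma,g_1,\dots,g_n}\times\{|\gamma|\}\times\gamma A$ (where $w$ is the start of $g_1$), applies $\nu^{(s)}$, and reads off $\frac{1}{R}\mathbb{P}_w(C_{g_1,\dots,g_n})\nu_w(A)$. Your preliminary forward-pushforward discussion is a correct heuristic (the collapse $\sum_{\text{completions of }\gamma}\mu_v(l)=\mu(\gamma)$ is just recurrence of the chain), but as you yourself note, the cylinder-set computation is what actually needs to be written down, and it bypasses the need to separately verify that the tail distribution is literally $\mathbb{P}_w$.
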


\begin{proof}
Let $w$ be a vertex, and let $g_1, g_2, \dots, g_n$ be a finite path starting from $w$. We have for any measurable $A \subseteq \mc M$
$$\pi^{-1}(C_{g_1, \dots, g_n} \times A) = \left\{ \begin{array}{ll} 
C_{g_1, \dots, g_n} \times \{0 \} \times A & \textup{if }w = v \\
\bigsqcup_{\gamma \in \Gamma_{v, w}} C_{\gamma, g_1, \dots, g_n} \times \{|\gamma|\} \times \gamma A & \textup{if }w \neq v \\
\end{array}\right.$$
where the union is over the set $\Gamma_{v, w}$ of all paths $\gamma$ from $v$ to $w = s(g_1)$ which do not pass through $v$ in their middle. 
Thus we have 
\begin{align*}
\overline{\nu}(C_{g_1, \dots, g_n} \times A) &  = \frac{1}{R} \sum_{\gamma \in \Gamma_{v, w}} \mu(\gamma) \mu(g_1) \dots \mu(g_n) \nu_v(\gamma A) \\
& = \frac{1}{R} \mu(g_1) \dots \mu(g_n) \nu_w( A) \\ 
& = \frac{1}{R} \mathbb{P}_w(C_{g_1, \dots, g_n}) \nu_w(A)
\end{align*}
which proves the claim, since both measures agree on all rectangles. 
\end{proof}

Recall that $R = \int r(\omega) \ d\mathbb{P}_v(\omega)$, and set $n_w = \nu_w(\mc M)$.
Here we show

\begin{lemma} \label{lem:better_work}
We have the identities:
\begin{enumerate}
\item
$R = \frac{1}{\pi_v}$
\item
$\pi_w = \frac{n_w}{R}$ for any vertex $w$ of $\Gamma$.
\end{enumerate}
\end{lemma}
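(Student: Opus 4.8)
The plan is to extract both identities from the structure of the Parry measure together with the fact, established in the previous lemma, that $\overline{\nu} = \frac{1}{R}\sum_w \mathbb{P}_w \otimes \nu_w$ is a probability measure. First I would prove (1). By definition $R = \int r(\omega)\, d\mathbb{P}_v(\omega)$ is the expected first return time to $v$ for the stationary Markov chain with the Parry transition probabilities $p_{ij} = \rho_j/(\lambda\rho_i)$ and stationary distribution $\pi_i = \rho_i u_i$. This is exactly the Kac return time formula: for an ergodic stationary Markov chain, the expected return time to a state $v$ equals $1/\pi_v$. Since $\Gamma$ is primitive, the chain restricted to the (unique) maximal component is irreducible and $\pi_v > 0$, so the formula applies and gives $R = 1/\pi_v$. (Alternatively, one can see this directly: $\mathbb{P}_v(r \ge n)$ is the probability the chain started at $v$ avoids $v$ for the first $n-1$ steps, and summing the tail against the balance equations for $\pi$ telescopes to $1/\pi_v$.)

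For (2), I would use that $\overline{\nu}$ is a probability measure and compare it state-by-state with the Parry measure $\mathbb{P}$ on $\Omega$. Evaluating $\overline{\nu}$ on $\Omega_w \times \mc M$ (the set of paths that start at the vertex $w$) gives, from the previous lemma, $\overline{\nu}(\Omega_w \times \mc M) = \frac{1}{R}\,\mathbb{P}_w(\Omega_w)\, n_w = \frac{n_w}{R}$, since $\mathbb{P}_w$ is a probability measure on $\Omega_w$. On the other hand, $\pi = \pi_\star \nu^{(s)}$ projects to a $T$-invariant, ergodic measure, and the first marginal of $\overline\nu$ on $\Omega$ is a shift-invariant ergodic probability measure whose weight on $\Omega_w$ must be the stationary weight $\pi_w$. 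Indeed, the commuting diagram of Lemma \ref{lem:commute} shows the $\Omega$-marginal of $\overline\nu$ is $T$-invariant; since the Parry measure is the unique measure of maximal entropy (equivalently, the unique stationary measure with these one-step statistics), and since $\overline\nu$ has by construction the right transition probabilities — the computation in the previous lemma shows $\overline{\nu}(C_{g_1,\dots,g_n}\times\mc M) = \frac{1}{R}\mathbb{P}_w(C_{g_1,\dots,g_n})\, n_w$, i.e. the conditional law on each $\Omega_w$ is exactly $\mathbb{P}_w$ — the $\Omega$-marginal equals $\sum_w \frac{n_w}{R}\,\mathbb{P}_w$. Comparing with the Parry measure, which assigns $\Omega_w$ the mass $\pi_w$ and conditions to $\mathbb{P}_w$ there, forces $\pi_w = n_w/R$ for every $w$.

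The main obstacle is making precise the identification of the $\Omega$-marginal of $\overline\nu$ with the Parry measure $\mathbb{P}$: a priori $\overline\nu$ was built only from loops at the single vertex $v$, suspended and pushed forward, so one must argue that this reconstructs the \emph{global} stationary measure on all of $\Omega$ rather than something supported only on paths through $v$. The point is that in a primitive structure every vertex communicates with $v$, so $\mathbb{P}$-a.e. path visits $v$ infinitely often, and inducing $\mathbb{P}$ on the return-to-$v$ set recovers $\mathbb{P}_v$ together with its first-return statistics; the suspension construction is the inverse of this inducing, and uniqueness of the measure of maximal entropy closes the loop. Once that identification is in hand, both (1) and (2) are immediate; in fact (2) with $w = v$ recovers (1), since $n_v = \nu_v(\mc M) = 1$ (as $\nu_v$ is a probability measure) gives $\pi_v = 1/R$, so one could alternatively deduce (1) as the special case $w=v$ of (2) after proving (2) directly.
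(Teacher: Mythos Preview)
Your argument is correct, but it takes a genuinely different route from the paper's. For (1) you both invoke Kac's formula, but for (2) the paper computes $n_w = \sum_{\gamma \in \Gamma_{v,w}} \mu(\gamma)$ directly: it reverses all edges of $\Gamma$, observes that this swaps the roles of the left and right Perron eigenvectors $u$ and $\rho$ while leaving $\pi$ unchanged, and then uses that in the reversed graph the total probability of first hitting $v$ starting from $w$ equals $1$ to evaluate $\sum_{\gamma \in \Gamma_{v,w}} \lambda^{-|\gamma|}$. This yields $n_w = \pi_w/\pi_v$, and (2) follows from (1).

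Your approach instead observes that the $\Omega$-marginal $P = \sum_w \tfrac{n_w}{R}\,\mathbb{P}_w$ of $\overline\nu$ is $\sigma$-invariant (since $\pi_1 \circ T = \sigma \circ \pi_1$ and $\overline\nu$ is $T$-invariant), is a Markov measure with the Parry transition probabilities, and hence its initial law $(n_w/R)_w$ must be the unique stationary distribution $(\pi_w)_w$ for the irreducible chain. This is cleaner and more conceptual than the edge-reversal computation, and as you note it gives (1) as the special case $w=v$. Two small corrections: the relevant uniqueness is that of the stationary distribution for an irreducible Markov chain, not the ``unique measure of maximal entropy'' statement (which is true but unnecessary here); and your ``main obstacle'' paragraph overcomplicates things --- once you have $\sigma$-invariance of $P$ and its Markov structure, the identification $P = \mathbb{P}$ is immediate and requires no discussion of inducing and reconstructing from loops.
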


Note that if we replace $\Gamma$ with the graph $\overline \Gamma$ obtained by reversing the direction of each edge, then the transition matrix for $\overline \Gamma$ is $M^T$ and so we have that $\rho$ and $u$ switch roles. In particular, new transition probabilities on edges from $v_i$ to $v_j$ are (in terms of the quantities defined in Section \ref{sec:clt_primitve}) $p_{ij} = \frac{u_j}{\lambda u_i}$ but the stationary measure on vertices is unchanged. 

\begin{proof}[Proof of Lemma \ref{lem:better_work}]
(1) is well-known. 
To prove (2), recall that $\Gamma_{v, w}$ is the set of all paths $\gamma$ from $v$ to $w$ which do not pass through $v$ in their middle. Hence, if we reverse all the paths in this set, we obtain $\overline \Gamma_{v,w}$ the set of all paths $\overline \gamma$ from $w$ to $v$ which do not pass through $v$ in their middle. Note that since almost every path staring at $w$ passes through $v$
\begin{align*}
1&= \sum_{\overline \gamma \in \overline \Gamma_{v,w}} \overline \mu(\overline \gamma)\\
&= \frac{u_v}{u_w} \sum_{\overline \gamma \in \overline \Gamma_{v,w}} \lambda^{-|\overline \gamma|},
\end{align*}
where $\overline \mu(\overline \gamma)$ is the product of the measures of the edges of $\overline \gamma$ with respect to $\overline p$ and we have used our previous observation about $\overline p$.

Using this and the fact that 
\[
 \sum_{\overline \gamma \in \overline \Gamma_{v,w}} \lambda^{-|\overline \gamma|} =  \sum_{ \gamma \in  \Gamma_{v,w}} \lambda^{-| \gamma|},
\]
we compute,
\begin{align*}
n_w &= \nu_w(\mc M) \\
&= \sum_{\gamma \in \Gamma_{v, w}} \mu(\gamma) 
= \frac{\rho_w}{\rho_v}  \sum_{\gamma \in \Gamma_{v, w}} \lambda^{-|\gamma|}\\
&= \frac{\rho_w}{\rho_v} \cdot \frac{u_w}{u_v}
= \frac{\pi_w}{\pi_v}.
\end{align*}
Hence, the lemma follows from (1). 
\end{proof}

\subsection{The Central Limit Theorem for the Markov chain}

We are now in a position to prove Theorem \ref{T:Markov}. 
By Melbourne-T\"or\"ok (\cite{Mel-Tor}, Theorem 1.1), we have:

\begin{proposition} \label{P:MT1}
Let $\phi \colon  \Omega \times \overline{X}^h \to \mathbb{R}$ belong to $L^b(\Omega \times \overline{X}^h, \overline{\nu})$ for some $b > 2$, and let 
$m := \int \phi \ d\overline{\nu}$. 
Define $\Phi \colon \Omega_v \times \overline X^h \to \mb R$ as $\Phi(\omega, \xi) := \sum_{k = 0}^{r(\omega) -1} 
\phi(T^k(\omega,\xi)) - m  r(\omega)$, 
and suppose that 
$$\frac{\sum_{j = 0}^{n-1} \Phi \circ T_v^j }{\sqrt{n}}$$
converges to a normal distribution in probability on $(\Omega_v \times \overline{X}^h, \mu_v^{\mb N} \otimes \nu_v)$.
Then the sequence
$$ \frac{\sum_{j = 0}^{n-1} \phi \circ T^j - n m}{\sqrt{n}} $$
converges to a normal distribution in probability on $(\Omega \times \overline{X}^h, \overline{\nu})$.
\end{proposition}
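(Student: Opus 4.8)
The plan is to exhibit $(\Omega \times \overline{X}^h, \overline{\nu}, T)$ as a measure-preserving factor of the discrete suspension flow $(\Omega^{(s)}, \nu^{(s)}, \widehat{T})$ over the base $(\Omega_v \times \overline{X}^h, \mu_v^{\mathbb{N}} \otimes \nu_v, T_v)$ with roof function $r$, and then to invoke the Melbourne--T\"or\"ok theorem (Theorem \ref{T:suspend}). Put $\widetilde{\phi} := \phi \circ \pi - m$ on $\Omega^{(s)}$. Since $\pi_\star \nu^{(s)} = \overline{\nu}$ by Lemma \ref{lem:commute} and $\int \phi \, d\overline{\nu} = m$, the function $\widetilde{\phi}$ has zero $\nu^{(s)}$-mean, and $\widetilde{\phi} \in L^b(\Omega^{(s)}, \nu^{(s)})$ because $\phi \in L^b(\overline{\nu})$ and $\nu^{(s)}$ is a probability measure. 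Identifying $\Omega^{(s)}$ with $\{(x, k) : 0 \le k \le r(x) - 1\}$ for $x = (\omega, \xi)$, and noting that $\pi(\omega, k, \xi) = T^k(\omega, \xi)$ when $\omega \in \Omega_v$ is read as an element of $\Omega$ by forgetting its loop decomposition, the observable induced by $\widetilde{\phi}$ over the base is
\[
x \longmapsto \sum_{k=0}^{r(\omega)-1} \widetilde{\phi}(\omega, k, \xi) = \sum_{k=0}^{r(\omega)-1} \phi\bigl(T^k(\omega, \xi)\bigr) - m\, r(\omega),
\]
which is precisely the function $\Phi$ in the statement.

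Next I would check the hypotheses of Theorem \ref{T:suspend}. The base map $T_v$ is ergodic and measure-preserving by Lemma \ref{lem:measure}. The roof $r(\omega) = \Vert l_1(\omega) \Vert$ is the first-return time to $v$ for the Markov (Parry) measure; since $\Gamma$ is primitive this return time has exponential tails, so $r \in L^a(\Omega_v, \mu_v^{\mathbb{N}})$ for every finite $a$. Because $b > 2$ we have $1 - 1/b > 1/2$, so taking $a$ sufficiently large finite gives $(1 - 1/a)(1 - 1/b) \ge 1/2$. Moreover $r$ depends only on the first coordinate of $\omega \in (\Gamma_v)^{\mathbb{N}}$, so under $\mu_v^{\mathbb{N}}$ the variables $r \circ \sigma_v^j$ are i.i.d.\ with finite variance and hence satisfy a CLT (degenerate if all prime loops at $v$ have equal length, which is harmless). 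The remaining hypothesis, that $\Phi$ satisfies a CLT on the base, is exactly the assumption of the Proposition. Theorem \ref{T:suspend} therefore yields that $\widetilde{\phi}$ satisfies a CLT on $(\Omega^{(s)}, \nu^{(s)}, \widehat{T})$, i.e.\ $\frac{1}{\sqrt{n}} \sum_{j=0}^{n-1} \widetilde{\phi} \circ \widehat{T}^j$ converges in distribution to $\mc N_\sigma$.

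It remains to transfer this along $\pi$. By Lemma \ref{lem:commute} we have $\pi \circ \widehat{T} = T \circ \pi$, hence $\pi \circ \widehat{T}^j = T^j \circ \pi$ and
\[
\sum_{j=0}^{n-1} \widetilde{\phi} \circ \widehat{T}^j = \sum_{j=0}^{n-1} (\phi \circ \pi - m) \circ \widehat{T}^j = \Bigl( \sum_{j=0}^{n-1} \phi \circ T^j - nm \Bigr) \circ \pi .
\]
Since $\pi_\star \nu^{(s)} = \overline{\nu}$, the $\overline{\nu}$-distribution of $\frac{1}{\sqrt{n}}\bigl(\sum_{j<n} \phi \circ T^j - nm\bigr)$ coincides with the $\nu^{(s)}$-distribution of $\frac{1}{\sqrt{n}} \sum_{j<n} \widetilde{\phi} \circ \widehat{T}^j$, which converges to $\mc N_\sigma$. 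This is the asserted CLT for $\phi$.

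The only real subtlety is that $\pi$ is merely a factor map, not an isomorphism: distinct triples $(\omega, k, \xi)$ may share an image, since a path in $\Omega$ does not remember how far into a loop at $v$ it began. This is no obstruction, because a CLT transfers along any measure-preserving equivariant map through the identity $\bigl(\sum_{j<n} \phi \circ T^j\bigr) \circ \pi = \sum_{j<n} (\phi \circ \pi) \circ \widehat{T}^j$ together with $\pi_\star\nu^{(s)} = \overline\nu$. Beyond this, the one genuine estimate is the exponential integrability of the first-return time $r$, needed to meet the moment condition $(1 - 1/a)(1 - 1/b) \ge 1/2$; the rest is bookkeeping and a direct appeal to Theorem \ref{T:suspend}.
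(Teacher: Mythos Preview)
Your proposal is correct and follows essentially the same approach as the paper: set $\widetilde{\phi}=\phi\circ\pi-m$, verify that the induced base observable is $\Phi$, check the moment hypotheses of Theorem~\ref{T:suspend} (exponential tail of $r$ giving $r\in L^a$ for all $a$, and the i.i.d.\ CLT for $r$), apply that theorem to obtain the CLT on the suspension, and push it down to $(\Omega\times\overline{X}^h,\overline{\nu},T)$ via the equivariance $\pi\circ\widehat{T}=T\circ\pi$ and $\pi_\star\nu^{(s)}=\overline{\nu}$. Your write-up simply makes explicit a few details the paper leaves tacit (the identification $\pi(\omega,k,\xi)=T^k(\omega,\xi)$ and the remark that $\pi$ being a mere factor map suffices).
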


\begin{proof}
Note that since $r$ has exponential tail, it belongs to $L^a(\Omega_v)$ for any $a \geq 1$. Then the condition $(1-1/a)(1-1/b) \geq 1/2$ is satisfied as long 
as $b > 2$. 
Moreover, $(r \circ T_v^n(\omega))_{n}$ is a sequence of independent random variables and so it satisfies a CLT.
Hence, we can apply Theorem \ref{T:suspend} to obtain a central limit theorem for the observable $\phi \circ \pi - m$ 
and the system $\widehat{T}$, with measure $\nu^{(s)}$. 
Moreover, since $\phi \circ \pi \circ \widehat{T}^n = \phi \circ T^n \circ \pi$ by Lemma \ref{lem:commute}, 
this is equivalent to a central limit theorem for the observable $\phi$ on 
the system $T$ with the measure $\pi_*(\nu^{(s)}) = \overline{\nu}$.
\end{proof}

\begin{proposition} \label{P:MT2}
There exist $\ell, \sigma$ such that for any continuous, compactly supported $F \colon \mathbb{R} \to \mb R$ one has
$$\int_{\Omega \times \overline{X}^h}   F\left(  \frac{\beta_\xi(o, g_1\dots g_n o) - n \ell}{\sqrt{n}}  \right) d \overline{\nu}(\omega, \xi)  \to \int_{\mb R} F(t) \ d \mc N_\sigma(t),$$
as $n \to \infty$.
\end{proposition}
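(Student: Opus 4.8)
The plan is to apply Proposition \ref{P:MT1} with the observable $\phi(\omega, \xi) := f(\omega, \xi) = \beta_\xi(o, g_1 o)$ from Lemma \ref{lem:sum_bus}. First I would verify the integrability hypothesis of Proposition \ref{P:MT1}: since $\phi(\omega,\xi) = \beta_\xi(o, g_1 o)$ and $|\beta_\xi(o, g_1 o)| \le d(o, g_1 o)$, which is bounded by the $\Gamma$-length of the edge $g_1$ times a constant (as $G$ acts by isometries and there are only finitely many edges), $\phi$ is in fact bounded, hence in $L^b(\Omega \times \overline{X}^h, \overline{\nu})$ for every $b > 2$. Set $\ell := m = \int \phi \, d\overline{\nu}$. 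By Lemma \ref{lem:sum_bus} (telescoping the cocycle along the orbit), $\sum_{k=0}^{n-1} \phi(T^k(\omega,\xi)) = \beta_\xi(o, w_n o)$, so the conclusion of Proposition \ref{P:MT1} gives precisely the convergence in distribution of $\frac{\beta_\xi(o, g_1\dots g_n o) - n\ell}{\sqrt n}$ to a normal law under $\overline{\nu}$, which is the statement.

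The remaining task is to check the hypothesis of Proposition \ref{P:MT1} concerning the induced observable on the loop system, namely that $\frac{1}{\sqrt n}\sum_{j=0}^{n-1} \Phi \circ T_v^j$ converges to a normal distribution on $(\Omega_v \times \overline{X}^h, \mu_v^{\mathbb N} \otimes \nu_v)$, where $\Phi(\omega,\xi) = \sum_{k=0}^{r(\omega)-1}\phi(T^k(\omega,\xi)) - m\, r(\omega)$. Here I would again use Lemma \ref{lem:sum_bus} applied to the loop system: $\sum_{k=0}^{r(\omega)-1}\phi(T^k(\omega,\xi)) = \beta_\xi(o, l_1 o)$, where $l_1 = l_1(\omega)$ is the first return loop, so $\Phi(\omega,\xi) = \beta_\xi(o, l_1 o) - \ell \Vert l_1 \Vert$. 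Summing over $j$ and telescoping once more, $\sum_{j=0}^{n-1}\Phi \circ T_v^j(\omega,\xi) = \beta_\xi(o, (l_1\cdots l_n) o) - \ell\,(\Vert l_1 \Vert + \dots + \Vert l_n \Vert)$. Now I observe that under $\mu_v^{\mathbb N} \otimes \nu_v$, the loops $l_1, l_2, \dots$ are i.i.d.\ with law $\mu_v$, and the expression $\beta_\xi(o, (l_1\cdots l_n)o) - \ell N(l_1\cdots l_n)$ (with $N(g) = -\Vert g\Vert$) is exactly $-\eta(g^{-1},\xi)$ evaluated at the random walk increment, i.e.\ this is the cocycle sum governed by Corollary \ref{C:CLT-loop}. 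Thus the CLT for this sum is Corollary \ref{C:CLT-loop} (together with Lemma \ref{lem:measure} providing an ergodic stationary $\nu_v$), provided $\ell$ is the same constant; and indeed $\ell$ must coincide because the drift is pinned down by $\int \Phi \, d(\mu_v^{\mathbb N}\otimes\nu_v) = 0$, which by the computation $R = 1/\pi_v$ in Lemma \ref{lem:better_work} and the definition $m = \int\phi\,d\overline{\nu}$ matches the normalization in Corollary \ref{C:CLT-loop}.

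The main obstacle I anticipate is bookkeeping the compatibility of constants and measures: one must confirm that the $\ell$ coming from Corollary \ref{C:CLT-loop} (chosen so that $\int \eta \, d\check\mu_v \, d\nu_v = 0$) is the same as $m = \int \phi\, d\overline{\nu}$, and that the ``convergence to a normal distribution in probability'' in Corollary \ref{C:CLT-loop} (stated via integration against compactly supported $F$, for $\nu_v$-a.e.\ $\xi$) upgrades to the joint-in-$(\omega,\xi)$ statement needed as input to Proposition \ref{P:MT1}. The former is a drift-matching computation using $\overline{\nu} = \frac{1}{R}\sum_w \mathbb{P}_w \otimes \nu_w$ and Lemma \ref{lem:better_work}; the latter follows because the partial sum $\sum_{j<n}\Phi\circ T_v^j$ depends on $\omega$ only through $g = l_1\cdots l_n$ whose law under $\mu_v^{\mathbb N}$ is $\mu_v^{*n}$, so Fubini over $\xi$ converts the a.e.-$\xi$ statement of Corollary \ref{C:CLT-loop} into the integrated joint statement by dominated convergence (using boundedness of $\phi$, or the finite second moment already established). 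I would also note that $\Phi \in L^b$ for all $b$ since $r$ has exponential tails and $|\beta_\xi(o, l_1 o)| \le d(o, l_1 o) \le \kappa\, r(\omega)$ for a constant $\kappa$, so all moment conditions in Theorem \ref{T:suspend} are comfortably satisfied. Once these checks are in place, Proposition \ref{P:MT2} follows directly; the value of $\sigma$ is $\sigma^2 = \sigma_1^2/R$ where $\sigma_1^2$ is the variance from Corollary \ref{C:CLT-loop}, as recorded in the ``moreover'' clause of Theorem \ref{T:suspend}.
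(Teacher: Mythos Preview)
Your proposal is correct and follows essentially the same route as the paper's proof: apply Proposition~\ref{P:MT1} with $\phi=f$, use Lemma~\ref{lem:sum_bus} to identify $\Phi$ with $\beta_\xi(o,l_1o)-\ell\Vert l_1\Vert$ and its partial sums with $\beta_\xi(o,l_1\cdots l_no)-\ell\Vert l_1\cdots l_n\Vert$, and feed in Corollary~\ref{C:CLT-loop} (integrated over $\nu_v$) as the required CLT on the loop system. Your extra care about boundedness of $\phi$, the drift-matching $\ell=m$, and the passage from the $\nu_v$-a.e.\ statement to the joint one are exactly the details the paper handles tersely (the paper simply writes $\ell=m=\tfrac{\int\beta_\xi(o,go)\,d\mu_v\,d\nu_v}{\int\Vert g\Vert\,d\mu_v}$ and ``integrating in $d\nu_v$''); note that the drift identity follows directly from $\overline{\nu}=\pi_*\nu^{(s)}$ and Lemma~\ref{lem:sum_bus} without invoking Lemma~\ref{lem:better_work}.
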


\begin{proof}
Let us apply the previous Proposition with $\phi = f$ where $f \colon \Omega \times \overline X^h \to \mathbb{R}$ is defined as $f(\omega, \xi) := \beta_\xi(o, g_1 o)$.
Then by definition of $\Phi$ and $f$, Lemma \ref{lem:sum_bus} gives that for every $\omega \in \Omega_v$
\begin{align*} 
\Phi(\omega, \xi) & = \sum_{k = 0}^{r(\omega) - 1} f(T^k(\omega,\xi)) - \ell  r(\omega)\\
&= \beta_{\xi}(o, w_{r(\omega)} o)  - \ell r(\omega) =: f_v(\omega, \xi),
\end{align*}
where $\ell = m = \frac{\int \beta_\xi(o, go) \ d\mu_v(g) d\nu_v(\xi)}{\int \Vert g \Vert \ d\mu_v(g)}$.
Now, by Corollary \ref{C:CLT-loop}, integrating in $d\nu_v$ 
we have for some $\sigma_1 \geq 0$ 
$$\int_{G \times \overline{X}^h}  F\left(  \frac{\beta_\xi(o, go) - \ell \Vert g \Vert}{\sqrt{n}}  \right) d\mu_v^{*n}(g) d\nu_v(\xi)   \to \int_{\mb R} F(t) \ d \mc N_{\sigma_1}(t).$$
Note moreover that $\beta_\xi(o, l_1 \dots l_n o) - \ell \Vert l_1 \dots l_n \Vert = \sum_{j = 0}^{n-1} f_v (T_v^j(\omega, \xi))$, 
hence we can rewrite the above equation as 
$$\int_{\Omega_v \times \overline{X}^h}  F\left(  \frac{\sum_{j = 0}^{n-1} f_v( T_v^j(\omega, \xi))}{\sqrt{n}}  \right) d(\mu_v^{\mb N} \otimes \nu_v)(\omega, \xi)   \to \int_{\mb R} F(t) \ d \mc N_{\sigma_1}(t).$$
Thus, by Proposition \ref{P:MT1} and the above calculation, we also have (for some different $\sigma$)
$$\int_{\Omega_v \times \overline{X}^h}  F\left(  \frac{\sum_{j = 0}^{n-1} f \circ T^j(\omega, \xi) - n \ell}{\sqrt{n}}  \right) d \overline{\nu}(\omega, \xi)   \to \int_{\mb R} F(t) \ d \mc N_\sigma(t).$$
The claim follows by again using that by Lemma \ref{lem:sum_bus},
we have
$\sum_{j = 0}^{n-1} f \circ T^j(\omega, \xi) = \beta_\xi(o, g_1 \dots g_n o)$.
\end{proof}

Now, we will need to go from the CLT for the Busemann cocycle to the one for displacement. To do so, we use the following variation of 
\cite[Proposition 3.3]{BQ-hyperbolic}. 

By \cite[Proposition 4.4]{MT} and the fact that $\Gamma_v^{-1}$ is nonelementary, we have $\nu_v(\overline{X}^h_\infty) = 1$ for any vertex $v$. 

\begin{lemma} \label{L:indep}
For any $\epsilon > 0$ there exists $T$ such that for all vertices $w$ in $\Gamma$, all $\xi \in \overline{X}^h_\infty$ and all $n \geq 1$ we have
\[
\mathbb{P}_w \left(\omega \ : \  |d(o, g_1 \dots g_n o) - \beta_\xi(o, g_1 \dots g_n o) | \leq T \right) \geq 1 - \epsilon.
\]
\end{lemma}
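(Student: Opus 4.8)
The plan is to reduce the statement to a uniform control, over all horofunctions $\xi \in \overline{X}^h_\infty$, of the difference between the displacement $d(o, go)$ and the Busemann cocycle $\beta_\xi(o, go)$, evaluated along a typical Markov path. The key elementary fact is that for a $\delta$-hyperbolic space, if $\xi$ is an \emph{infinite} horofunction then the associated horofunction $h_\xi$ behaves, up to bounded error, like the Gromov product with a boundary point: there is a constant $K = K(\delta)$ such that for all $x, y \in X$,
\[
d(o, y) - \beta_\xi(o, y) = d(o, y) + h_\xi(o) - h_\xi(y) \le 2\,(y \mid \zeta_\xi)_o + K,
\]
where $\zeta_\xi$ is a boundary point in the ``direction'' determined by $\xi$ (for nonproper $X$ one phrases this via the standard inequality $h_\xi(y) \ge d(o,y) - 2(y \mid z)_o - K$ valid for all $z$ sufficiently far toward $\xi$, then passes to the limit). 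Thus the event in the statement fails only when $g_1 \cdots g_n o$ lands in a region of large Gromov product with $\zeta_\xi$, i.e. when the Markov path travels a definite distance \emph{back toward} $\xi$ before escaping.

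First I would fix $\xi \in \overline{X}^h_\infty$ and $w$, and observe that by the nonelementarity of the action (Lemma \ref{lem:thick_implies}) together with the fact, established via \cite[Proposition 4.4]{MT}, that the stationary measure gives full mass to $\overline{X}^h_\infty$, the Markov chain drifts to infinity in $X$ at linear speed with positive probability growing to $1$; more precisely one shows a uniform (in $\xi$, $w$, and $n$) lower bound on $\mathbb{P}_w\big( (g_1\cdots g_n o \mid \zeta_\xi)_o \le T \big)$. The natural way to get this is a \emph{reflection-type} or \emph{first-step} argument: the Gromov product $(g_1\cdots g_n o \mid \zeta_\xi)_o$ being larger than $T$ forces the initial segment of the path to fellow-travel a geodesic toward $\zeta_\xi$ for length roughly $T$; since there are only finitely many vertices, there is a uniform lower bound $p_0 > 0$ on the probability that the loop semigroup $\Gamma_v$ (acting nonelementarily) produces, within a bounded number of steps, a loxodromic-type displacement pushing the path \emph{away} from any prescribed direction by a definite amount, and iterating this over disjoint blocks gives exponential decay $\mathbb{P}_w\big( (g_1\cdots g_n o \mid \zeta_\xi)_o > T\big) \le C e^{-cT}$, uniformly in $n$, $w$, $\xi$. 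Choosing $T$ with $C e^{-cT} < \epsilon$ and combining with the hyperbolic inequality above then yields the claim, after also bounding the ``other side'' $\beta_\xi(o,y) - d(o,y) \le 0$ trivially since $\beta_\xi(o,y) = h_\xi(y) - h_\xi(o) \le d(o,y) - (-1)\cdot 0$... wait, more carefully $h_\xi$ is $1$-Lipschitz and $h_\xi(o) = 0$, so $\beta_\xi(o,y) = h_\xi(y) \le d(o,y)$ automatically, giving one inequality for free; only the lower bound on $\beta_\xi(o,y)$ requires the Gromov-product estimate.

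The main obstacle I anticipate is making the escape estimate genuinely \emph{uniform in $\xi$ and in the starting vertex $w$} and, crucially, uniform in $n$ — i.e. that the path does not return toward $\xi$ at a \emph{late} time either. Uniformity in $w$ is handled by finiteness of the vertex set and the observation (used already above) that paths from any $w$ reach the maximal component, so one can condition on reaching $v$ within bounded expected time. Uniformity in $n$ is the real point: it is not enough that the path escapes toward infinity, one needs that the \emph{specific} quantity $(w_n o \mid \zeta_\xi)_o$ stays bounded for \emph{all} $n$ simultaneously with probability $\ge 1-\epsilon$, which one gets by noting that once the path has traveled distance $> 2T + O(\delta)$ from $o$ in a direction transverse to $\zeta_\xi$ (which happens in bounded time with probability close to $1$ by the previous paragraph), thin-triangle geometry pins $(w_n o \mid \zeta_\xi)_o \le T$ for all subsequent $n$ as well; so the bad event is really an event about a bounded-length initial segment, and its probability is controlled uniformly. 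This is precisely the content of \cite[Proposition 3.3]{BQ-hyperbolic} in the random-walk setting, and the adaptation here only needs the nonelementarity of $\Gamma_v^{-1}$ and the exponential tail of return times to $v$, both already available.
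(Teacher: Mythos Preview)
Your reduction to the Gromov product is exactly right and matches the paper: by $\delta$-hyperbolicity there is a projection $\pi\colon \overline{X}^h_\infty \to \partial X$ and
\[
d(o, go) - \beta_\xi(o, go) = 2\,(go \mid \pi(\xi))_o + O(\delta),
\]
so the problem is to bound $\sup_{n\ge 1}(w_n o \mid \pi(\xi))_o$ uniformly in $\xi$, $w$, and $n$ with high $\mathbb{P}_w$-probability. (Your observation that one inequality is free from $1$-Lipschitzness is correct.)

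Where you diverge from the paper is in how you obtain that sup bound. The paper does not build an exponential block estimate; it simply invokes the fact, proved in \cite[Lemma~4.2]{GTT2}, that the pushforward of $\mathbb{P}_w$ to $\partial X$ is \emph{non-atomic}. From non-atomicity one gets, exactly as in \cite[Proposition~3.3]{BQ-hyperbolic}, that the shadow $\{\eta:(\eta\mid\zeta)_o>T\}$ has hitting measure tending to $0$ uniformly in $\zeta$, and hence the sup estimate. This is shorter and avoids any ad hoc geometry.

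Your sketch has one genuine error. The claim that ``once the path has traveled distance $>2T+O(\delta)$ from $o$ in a direction transverse to $\zeta_\xi$, thin-triangle geometry pins $(w_n o\mid \zeta_\xi)_o\le T$ for all subsequent $n$'' is false: nothing in $\delta$-hyperbolicity prevents an arbitrary path from later wandering back toward $\zeta_\xi$ and making the Gromov product large again (take a path in $\mathbb{H}^2$ that first moves far sideways and then straight up). So the bad event is \emph{not} an event about a bounded initial segment, and the sup over $n$ cannot be controlled by this argument. What actually makes the sup bound work is that the sample path converges to a boundary point $\eta$ whose law has no atoms, so that $(\eta\mid\zeta)_o$ is bounded with high probability uniformly in $\zeta$; the passage from this to the sup over $n$ is then the content of the Benoist--Quint argument you cite, but it uses non-atomicity (equivalently, uniform shadow decay), not the thin-triangle pinning you propose. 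If you replace your geometric paragraph with the non-atomicity input from \cite{GTT2}, your proof becomes correct and essentially coincides with the paper's.
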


\begin{proof}
Recall that by \cite[Section 3.3]{MT} there exists a $G$-equivariant map $\pi \colon \overline{X}^h_\infty \to \partial X$, where $\partial X$ is the Gromov boundary.
Then, by definition of Gromov product and $\delta$-hyperbolicity, we have 
\begin{equation} \label{E:gproduct}
d(o, go) - \beta_{\xi}(o, go) = 2 (go, \pi(\xi))_o + O(\delta)
\end{equation}
for any $\xi \in \overline{X}^h_\infty$.
Now, since the pushforward of the stationary measure $\mathbb{P}_w$ for the Markov chain starting at $w$ 
to the Gromov boundary of $X$ is not atomic (\cite[Lemma 4.2]{GTT2}), we have that for every $\epsilon > 0$ there exists $T$ such that 
$$\mathbb{P}_w({ \omega \in \Omega_w \ : \ \sup_{n \geq 1} (w_n o, \pi(\xi))_o \leq T }) \geq 1 - \epsilon$$
for all $\xi \in \overline{X}^h_\infty$ and for all $w$. This, combined with eq. \eqref{E:gproduct}, yields the desired estimate.
\end{proof}

\begin{proof}[Proof of Theorem \ref{T:Markov}]
Let $F \colon \mathbb{R} \to \mathbb{R}$ be continuous with compact support. Since $F$ is uniformly continuous and by Lemma \ref{L:indep}, for any $\eta > 0$ there exists 
$n_0$ such that for any $n \geq n_0$, any $w$ and any $\xi \in \overline{X}^h_\infty$ one has
$$\left| F\left( \frac{d(o, w_n o) - n \ell}{\sqrt{n}} \right) - F\left( \frac{\beta_\xi(o, w_n o) - n \ell}{\sqrt{n}} \right) \right| < \eta$$
with probability $\mathbb{P}_w$ at least $1 - \epsilon$. Thus, since $\overline{\nu} = \frac{1}{R} \sum_w \mathbb{P}_w \otimes \nu_w$,
for any $\eta > 0$ there exists $n_0$ such that for any $n \geq n_0$ we have 
\begin{equation} \label{E:eta}
\left| F\left( \frac{d(o, g_1 \dots g_n o) - n \ell}{\sqrt{n}} \right) - F\left( \frac{\beta_\xi(o, g_1 \dots g_n o) - n \ell}{\sqrt{n}} \right) \right| < \eta
\end{equation}
on a subset of $\Omega \times \overline{X}^h$ of $\overline{\nu}$-measure $\geq 1 - \epsilon$.
On the other hand, by Proposition \ref{P:MT2}, we have
$$\int_{\Omega \times \overline{X}^h}   F\left(  \frac{\beta_\xi(o, g_1\dots g_n o) - n \ell}{\sqrt{n}}  \right) d \overline{\nu}(\omega, \xi)  \to \int_{\mb R} F(t) \ d \mc N_\sigma(t).$$
Now,  by \eqref{E:eta}, for any $\eta > 0$ there exists $n_0$ such that for any $n \geq n_0$ we have 
$$\left| F\left( \frac{d(o, g_1 \dots g_n o) - n \ell}{\sqrt{n}} \right) - F\left( \frac{\beta_\xi(o, g_1 \dots g_n o) - n \ell}{\sqrt{n}} \right) \right| < \eta$$
on a set of $\overline{\nu}$-measure $\geq 1 - \epsilon$, hence 
$$\int_{\Omega \times \overline{X}^h}  F\left(  \frac{d(o, g_1\dots g_n o) - n \ell}{\sqrt{n}}  \right) d \overline{\nu}(\omega, \xi)  \to \int_{\mb R} F(t) \ d \mc N_\sigma(t).$$
Since the integrand does not depend on $\xi$, then we also have 
$$\int_\Omega  F\left(  \frac{d(o, g_1\dots g_n o) - n \ell}{\sqrt{n}}  \right) d P(\omega)  \to \int_{\mb R} F(t) \ d \mc N_\sigma(t),$$
where $P$ is the pushforward of $\overline{\nu}$ to $\Omega$.
Finally, since $\overline{\nu} = \frac{1}{R} \sum_w \mathbb{P}_w \otimes \nu_w$, the pushforward of $\overline{\nu}$ to $\Omega$ equals $P = \sum_w \frac{n_w}{R} \mathbb{P}_w$, where $n_w = \nu_w(M)$. Hence Lemma \ref{lem:better_work} implies that $P = \sum_w \pi_w \mathbb{P}_w = \mathbb{P}$, 
thus we also have
$$\int_\Omega  F\left(  \frac{d(o, g_1 \dots g_n o) - n \ell}{\sqrt{n}}  \right) d\mathbb{P}(\omega)  \to \int_{\mb R} F(t) \ d \mc N_\sigma(t)$$
as required.
\end{proof}

\section{Uniqueness of drift and variance}

Now suppose that $\Gamma$ is a semisimple graph structure. In particular, each maximal component $C_i$ of $\Gamma$ gives a primitive graph structure (without an initial vertex) on $G$ to which the results of the previous section (in particular, Theorem \ref{T:Markov}) applies. Hence for each maximal component $C_i$ of $\Gamma$, Theorem \ref{T:Markov} gives constants $\ell_i$ and $\sigma_i$ for the associated CLT.

In this section, we show that the CLTs for the recurrent components of $\Gamma$ are compatible in the sense that they have the same drift and variance. This is the primary place where we will use thickness and biautomaticity of $\Gamma$.

\begin{remark}
Our standing assumption until Section \ref{sec:almost_semi} is that $\Gamma$ is a semisimple graph structure on $G$. This implies that the transition matrix for each component of maximal growth is aperiodic. 
\end{remark}

\subsection{Uniformly bicontinuous functions}
Let us begin by introducing a class of functions that are well behaved under bounded perturbations in the group.

Let $\Omega^*$ be the set of finite length paths in $\Gamma$ starting at any vertex. 
Throughout this section, we fix a word metric $d_G$ on $G$.

\begin{definition}
A function $f \colon \Omega^* \to \mathbb{R}$ is \emph{uniformly bicontinuous} if for any finite set $B \subseteq G$ and any $\eta > 0$, 
there exists $N\ge 0$ such that if $\Vert g \Vert \geq N$ and 
$$b_1 \overline g b_2 = \overline h$$
in $G$ for some $b_1, b_2 \in B$,
then 
$$\left| f(g) - f (h) \right| < \eta.$$
\end{definition}

The following lemma gives the control we need on the length of paths in the graph structure.

\begin{lemma} \label{L:norm-control}
Suppose that the graph structure $\Gamma$ for $G$ is biautomatic. 
For any finite set $B \subseteq G$ there exists a constant $\mc B \ge 0$ such that 
if $g$ and $h$ are finite length paths in $\Gamma$, 
and $\overline{g} = b_1 \overline{h} b_2$ in $G$, then
$$| \Vert g \Vert - \Vert h \Vert | \leq \mc B.$$
\end{lemma}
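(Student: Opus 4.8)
The statement to prove is Lemma \ref{L:norm-control}: for a biautomatic graph structure $\Gamma$ and any finite set $B \subseteq G$, there is a constant $\mc B$ so that whenever $g, h$ are finite paths with $\overline g = b_1 \overline h b_2$ in $G$ for $b_1, b_2 \in B$, we have $|\Vert g \Vert - \Vert h \Vert| \le \mc B$. The plan is to feed the fellow-traveling estimate of Definition \ref{D:fellow-travel} into the endpoint of the longer path and convert the resulting bounded $d_G$-distance back into a bound on path length, using that the graph structure is geodesic (paths realize word length with respect to the edge labels) — except the paper does not actually assume the geodesic property here, so I will have to be careful to only use biautomaticity. Actually, the cleaner route avoids any geodesic hypothesis entirely.

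First I would apply Definition \ref{D:fellow-travel} to the pair $g, h$ with the constants $b_1, b_2 \in B$: this gives a constant $C = C(B)$ with $d_G(\overline{g(i)}, b_1 \overline{h(i)}) \le C$ for all $i \ge 0$. Now suppose without loss of generality that $\Vert g \Vert \ge \Vert h \Vert$, and set $i = \Vert h \Vert$. Then $h(i) = h$, so $d_G(\overline{g(i)}, b_1 \overline h) \le C$; combining with $d_G(\overline g, b_1 \overline h b_2) = 0$ and $d_G(b_1 \overline h b_2, b_1 \overline h) = d_G(b_1 \overline h, b_1 \overline h b_2) \le \max_{b \in B} \Vert b \Vert_S =: D$, the triangle inequality gives $d_G(\overline{g(i)}, \overline g) \le C + D$. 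The point is that $\overline{g(i)}$ and $\overline g$ are endpoints of the prefix of $g$ of length $i$ and of $g$ itself, and these differ by the subpath of $g$ running from position $i$ to position $\Vert g \Vert$, which evaluates to an element of word length at most $\Vert g \Vert - i = \Vert g \Vert - \Vert h \Vert$ in the edge-label generating set. But I cannot directly turn a bound on $d_G$-distance into a bound on the number of edges unless the structure is geodesic. So the honest argument: apply Definition \ref{D:fellow-travel} instead to the pair $(g, g)$ — trivially $\overline g = \overline g$ — which is uninformative, so that does not help either.

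The resolution is to use biautomaticity a second time, now with the roles swapped and at the level of prefixes. Apply Definition \ref{D:fellow-travel} to $h$ and $g$ with constants $b_1^{-1}, b_2^{-1}$: since $\overline h = b_1^{-1} \overline g b_2^{-1}$ and $b_1^{-1}, b_2^{-1}$ lie in the finite set $B' := B \cup B^{-1}$, we get $d_G(\overline{h(j)}, b_1^{-1} \overline{g(j)}) \le C'$ for all $j$, with $C' = C'(B')$. Taking $j = \Vert g \Vert$ (so $g(j) = g$, $h(j) = h$), this reads $d_G(\overline h, b_1^{-1} \overline g) \le C'$, which is consistent but again about group distance. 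The actual content I need is that \emph{long prefixes of $g$ stay close to prefixes of $h$}, and for $i$ between $\Vert h \Vert$ and $\Vert g \Vert$ we have $h(i) = h$, so $d_G(\overline{g(i)}, b_1 \overline h) \le C$ for \emph{all} such $i$. In a locally finite Cayley graph $\cay(G)$, only finitely many vertices lie within distance $C + D$ of $b_1 \overline h$; call this number $K = K(B)$, which is uniform since balls of fixed radius in $\cay(G)$ have uniformly bounded size. Since $\Gamma$ is proper (each group element is the evaluation of at most finitely many paths — but this is not a uniform bound), I instead argue: the prefixes $g(i)$ for $\Vert h \Vert \le i \le \Vert g \Vert$ are distinct paths in $\Gamma$ all of whose evaluations lie in this ball of size $K$; but distinct prefixes of a \emph{fixed} path $g$ are automatically distinct as paths, and — here is where I need one more input — if the structure is geodesic (which it is in all the applications, Theorems \ref{T:main}–\ref{th:raag}), distinct-length prefixes have distinct evaluations, forcing $\Vert g \Vert - \Vert h \Vert \le K$.

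\textbf{Main obstacle.} The crux is exactly this last conversion: biautomaticity controls the $d_G$-distance between $\overline{g(i)}$ and a bounded perturbation of $\overline h$, but to bound $\Vert g \Vert - \Vert h \Vert$ I must rule out the longer path "spinning in place" near $b_1 \overline h$. I expect the intended proof uses that the relevant graph structures here are geodesic combings, so $\Vert g \Vert = d_G(e, \overline g)$ and the bound is immediate: $|\Vert g \Vert - \Vert h \Vert| = |d_G(e,\overline g) - d_G(e, \overline h)| \le d_G(\overline g, \overline h) \le \Vert b_1\Vert_S + \Vert b_2 \Vert_S =: \mc B$ by the triangle inequality alone, with no need for Definition \ref{D:fellow-travel} at all. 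If instead one wants the lemma for general biautomatic (non-geodesic) structures, one sets $\mc B := K(B)$ as above and the work is in the counting argument: the $(\Vert g\Vert - \Vert h\Vert + 1)$ prefixes $g(\Vert h\Vert), \dots, g(\Vert g\Vert)$ all evaluate into a ball of radius $C(B) + \max_{b\in B}\Vert b\Vert_S$ around $b_1\overline h$, this ball has size bounded by a function of $B$ alone (local finiteness of $\cay(G)$), and since along a single path the evaluation strictly moves in the Cayley graph at each step of a geodesic spelling — or more robustly, since $\Gamma$ is proper — at most boundedly many of these prefixes can share an evaluation, giving the bound. I would present the geodesic-combing version as the main line and remark that only the triangle inequality is used, since that is what the applications need.
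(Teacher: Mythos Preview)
You correctly arrive, in your first substantive paragraph, at the key estimate: writing $i = \Vert h \Vert$ and $p$ for the subpath of $g$ from position $i$ to position $\Vert g \Vert$, you get $d_G(1,\overline p) = d_G(\overline{g(i)},\overline g) \le C + D$. But then you abandon this line, declaring that you ``cannot directly turn a bound on $d_G$-distance into a bound on the number of edges unless the structure is geodesic,'' and wander into a prefix-counting argument whose ball is centered at the moving point $b_1\overline h$ (so that properness, which is not assumed uniform, does not obviously give a bound independent of $h$), and finally fall back on the geodesic hypothesis. That retreat is the gap.

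The missing idea --- and this is exactly what the paper does --- is a one-line consequence of properness that finishes the argument from where you already were. Claim: the function $n \mapsto \min\{ d_G(1,\overline x) : x \text{ a path in } \Gamma,\ \Vert x \Vert = n\}$ tends to infinity. Indeed, if not, there are paths of arbitrarily large length whose evaluations all lie in some fixed $d_G$-ball about $1$; since that ball contains only finitely many group elements, pigeonhole gives infinitely many distinct paths evaluating to a single element, contradicting properness. Now simply choose $\mc B$ so that this minimum exceeds $C+D$ for all $n > \mc B$; your subpath $p$ then satisfies $\Vert g \Vert - \Vert h \Vert = \Vert p \Vert \le \mc B$. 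No geodesic assumption is needed, and the prefix-counting detour is unnecessary. (The paper obtains the constant $2C$ rather than your $C+D$ by applying Definition~\ref{D:fellow-travel} at both $i = \Vert h \Vert$ and $i = \Vert g \Vert$ instead of invoking $b_2$ directly, but this is cosmetic.)
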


\begin{proof}
Recalling that our graph structures are proper, we note that since there are only finitely many group elements of a given length, the function 
\[
n \mapsto \min\{d_G(1,\overline x) : \Vert x \Vert = n\}
\]
goes to infinity with $n$. Hence, we choose $\mc B$ sufficiently large so that any path $x$ in $\Gamma$ of length at least $\mc B$ satisfies $d_G(1,\overline x) \ge 2C +1$, where $C$ is as in Definition \ref{D:fellow-travel} for the given set $B$.

Now let $g,h \in \Omega^*$ be as given in the statement of the lemma and suppose that  $\Vert h \Vert = m \le n = \Vert g \Vert$. Since the graph structure of biautomatic, we have that both $d_G(\overline{g(n)}, b_1 \overline{h(n)})$ and $d_G(\overline{g(m)}, b_1 \overline{h(m)})$ are no more than $C$. But $h = h(m) = h(n)$ and so we see that 
$d_G(g(m), g(n)) \le 2C$. So if $p$ is the path in $\Gamma$ of length $n-m$ such that $g(m)\cdot p =g(n)$, then $d_G(1, \overline p) \le 2C$. Hence, $\Vert g \Vert - \Vert h\Vert = n-m = \Vert p \Vert \le \mc B$, by our above choice of $\mc B$. This completes the proof.
\end{proof}

We next introduce the primary functions of interest used throughout this section. Define the following functions on $\Omega^*$:
for any $\ell \in \mb R$, 

\[
\varphi(g) := \frac{d(o, g o) - \ell \Vert g \Vert}{\sqrt{\Vert g \Vert}}
\]
and 
\[
\psi(g) := \frac{d(o, g o)}{\Vert g \Vert}.
\]

Let $F \colon \mathbb{R} \to \mathbb{R}$ be a continuous, compactly supported function. Given a path $g = g_1 \dots g_n$ in the graph, denote 
$g_{[a, b]} = g_a g_{a+1} \dots g_b$
its subpath from position $a$ to position $b$.  

Finally, we define
\[
\mathcal{S}_n F(g) := \sum_{i = 0}^{\Vert g \Vert - n} F(\varphi(g_{[i+1, i+n]})).
\]

\begin{lemma} \label{L:uniform}
If the graph structure and biautomatic (and proper), then the functions $\psi$ and $\varphi$ defined above are uniformly bicontinuous. Moreover, 
for any continuous, compactly supported $F$. 
\[
 \frac{\mathcal{S}_n F(g)}{\Vert g \Vert}
\]
is also uniformly bicontinuous.
\end{lemma}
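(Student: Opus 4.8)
The plan is to prove the three claims in turn, with the first two following directly from Lemma \ref{L:norm-control} and $\delta$-hyperbolicity, and the third being a telescoping estimate built on the first two.

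First I would handle $\psi$ and $\varphi$. Fix a finite set $B \subseteq G$ and $\eta > 0$. Suppose $g, h \in \Omega^*$ with $\overline g = b_1 \overline h b_2$, $b_1, b_2 \in B$, and $\Vert g \Vert \ge N$ for $N$ to be chosen. By Lemma \ref{L:norm-control} there is $\mathcal B = \mathcal B(B)$ with $|\Vert g \Vert - \Vert h \Vert| \le \mathcal B$; in particular for $N$ large $\Vert h \Vert$ is comparable to $\Vert g \Vert$ (say within a factor $2$). Also $|d(o, \overline g o) - d(o, \overline h o)| \le d(o, b_1 o) + d(o, b_2^{-1} o) \le D$ for a constant $D = D(B)$, since the action is by isometries and $\overline g o = b_1 \overline h b_2 o$. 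Now write
\[
\psi(g) - \psi(h) = \frac{d(o,\overline g o) - d(o, \overline h o)}{\Vert g \Vert} + d(o, \overline h o)\left( \frac{1}{\Vert g \Vert} - \frac{1}{\Vert h \Vert}\right).
\]
The first term is $O(D/N)$. For the second, $d(o, \overline h o) \le L \Vert h \Vert$ where $L$ bounds the displacement of a generator, and $|1/\Vert g \Vert - 1/\Vert h\Vert| = |\Vert g \Vert - \Vert h \Vert|/(\Vert g \Vert \Vert h \Vert) \le \mathcal B/(\Vert g \Vert \Vert h \Vert)$, so the second term is $\le L \mathcal B/\Vert g \Vert = O(\mathcal B L/N)$. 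Hence for $N$ large enough (depending on $B$ and $\eta$), $|\psi(g) - \psi(h)| < \eta$. The argument for $\varphi$ is identical: the numerator $d(o,\overline g o) - \ell\Vert g\Vert$ changes by at most $D + |\ell|\mathcal B$ between $g$ and $h$, and $1/\sqrt{\Vert g \Vert}$ in place of $1/\Vert g \Vert$ only improves the decay, while the ``shared numerator times difference of $1/\sqrt{\cdot}$'' term is $O(\Vert h\Vert \cdot \mathcal B/\Vert g\Vert^{3/2}) = O(\mathcal B/\sqrt N)$ (using $\Vert h\Vert \asymp \Vert g\Vert$). So both are uniformly bicontinuous.

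Next, the statement for $\mathcal S_n F$. Fix $B$, $\eta$, and continuous compactly supported $F$; let $K = \sup|F|$ and let $\omega_F$ be a modulus of uniform continuity for $F$. Given $g, h$ with $\overline g = b_1 \overline h b_2$, I want to compare $\mathcal S_n F(g)/\Vert g\Vert$ and $\mathcal S_n F(h)/\Vert h\Vert$. The key point is that biautomaticity gives fellow-traveling: by Definition \ref{D:fellow-travel}, $d_G(\overline{g(i)}, b_1\overline{h(i)}) \le C$ for all $i$, so for each $i$ the subpaths $g_{[i+1,i+n]}$ and $h_{[i+1, i+n]}$ have evaluations that agree up to a bounded amount on the left and — looking at position $i+n$ — up to a bounded amount on the right as well; precisely, $\overline{g_{[i+1,i+n]}}$ differs from $c_1 \overline{h_{[i+1,i+n]}} c_2$ for $c_1, c_2$ in a fixed finite set $B' = B'(B)$ (the ball of radius $C$ in $G$). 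Hence, applying the already-established uniform bicontinuity of $\varphi$ with the set $B'$: for those indices $i$ with $n$ large enough (i.e. $\Vert g_{[i+1,i+n]} \Vert = n \ge N(B',\eta')$, which holds for all $i$ once $n \ge N$), we get $|\varphi(g_{[i+1,i+n]}) - \varphi(h_{[i+1,i+n]})| < \eta'$, and then $|F(\varphi(g_{[i+1,i+n]})) - F(\varphi(h_{[i+1,i+n]}))| \le \omega_F(\eta')$. Summing over the $\min(\Vert g\Vert, \Vert h\Vert) - n + 1$ common indices gives a contribution $\le \omega_F(\eta')$ after dividing by $\Vert g \Vert$ (up to the $\Vert h\Vert \asymp \Vert g\Vert$ comparison); the leftover indices — at most $|\Vert g \Vert - \Vert h\Vert| \le \mathcal B$ of them, plus a possible reindexing shift also bounded by the fellow-traveling constant — each contribute at most $K$ to the difference of the unnormalized sums, hence at most $K\mathcal B/\Vert g\Vert$ after normalization, and by comparing the two normalizations $1/\Vert g\Vert$ vs $1/\Vert h\Vert$ one picks up another $O(K\mathcal B/\Vert g\Vert)$ term as in the first part. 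Choosing first $\eta'$ so that $\omega_F(\eta') < \eta/2$, then $n$ large so all these $O(1/n)$ and $O(\mathcal B/n)$ terms are $< \eta/2$, completes the proof.

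The main obstacle I anticipate is the bookkeeping in the last step: handling the ``edge effects'' — the indices near the ends of $g$ and $h$ where the subpaths $g_{[i+1,i+n]}$ and $h_{[i+1,i+n]}$ do not correspond to one another because $\Vert g \Vert \ne \Vert h \Vert$ — and making precise the claim that fellow-traveling lets one compare the $i$-th window of $g$ with the $i$-th window of $h$ up to a uniformly bounded left/right perturbation. One must be careful that the ``right endpoint'' comparison $d_G(\overline{g(i+n)}, b_1 \overline{h(i+n)}) \le C$ combined with the ``left endpoint'' comparison at $i$ genuinely yields $\overline{g_{[i+1,i+n]}} = c_1 \overline{h_{[i+1,i+n]}} c_2$ with $c_1, c_2$ in a fixed ball — this is a short computation with $\overline{g_{[i+1,i+n]}} = \overline{g(i)}^{-1}\overline{g(i+n)}$ — but it is where the biautomaticity hypothesis does its real work, and it is worth stating as its own displayed estimate before invoking the uniform bicontinuity of $\varphi$.
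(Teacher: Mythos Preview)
Your proposal is correct and follows essentially the same approach as the paper: Lemma~\ref{L:norm-control} plus the triangle inequality handle $\psi$ and $\varphi$, and for $\mathcal{S}_nF/\Vert g\Vert$ biautomaticity gives the window-by-window relation $\overline{g_{[i+1,i+n]}} = c_1\,\overline{h_{[i+1,i+n]}}\,c_2$ with $c_1,c_2$ in a fixed finite set, after which uniform bicontinuity of $\varphi$ and uniform continuity of $F$ control the matched terms while the $O(\mathcal{B})$ unmatched terms are absorbed by the normalization. One caution on notation in your last paragraph: you use $n$ both for the window length in $\mathcal{S}_nF$ and (at the end) apparently for $\Vert g\Vert$; keep these separate, and note---as both you and the paper do implicitly---that the $\varphi$-comparison on windows requires the window length $n$ itself to exceed the threshold $N(B',\eta')$, so the argument as written establishes uniform bicontinuity for all sufficiently large $n$, which is what the application in Lemma~\ref{L:unique-variance} actually needs.
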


\begin{proof}
Suppose that $\overline{h} = b_1 \overline{g} b_2$ for some $b_1, b_2 \in B$. Then by Lemma \ref{L:norm-control}
$$|\Vert g \Vert - \Vert h \Vert | \leq \mathcal{B}$$
and by the triangle inequality
$$|d(o, g o) - d(o, h o)| \leq \mathcal{B}_1 $$
where $\mathcal{B}_1 := 2 \max_{b \in B} d(o, bo)$.
Finally, denote by $\mc L$ the Lipschitz constant so that 
$$d(o, g o) \leq \mc L \Vert g \Vert$$
for any $g \in \Omega^*$.

(1) By the above estimates, 
$$\left| \frac{d(o, h o)}{\Vert h \Vert} - \frac{d(o, g o)}{\Vert g \Vert} \right| \leq \frac{\mc L \mc B + \mc B_1}{\Vert g \Vert - \mc B},$$
and the right-hand side tends to $0$ as $\Vert g \Vert \to \infty$.

(2)
We can write
$$\left| \varphi(g) - \varphi(h) \right| = \frac{x}{\sqrt{n}} - \frac{x + y}{\sqrt{n + d}}$$
where $x = d(o, g o) - \ell \Vert g \Vert$, $y = d(o, h o) - \ell \Vert h \Vert- d(o,  g o) + \ell \Vert g \Vert$, 
$n = \Vert g \Vert $, and $d = \Vert h \Vert - \Vert g \Vert$.

Recall that by the above inequalities 
$$|d| \leq \mc B$$
hence also 
$$|y| \leq \mc B_1 + \ell \mc B$$ 
and 
$$|x| \leq (\mathcal{L}+\ell) \Vert g \Vert.$$
Thus, 
\begin{align*}
\left| \frac{x}{\sqrt{n}} - \frac{x + y}{\sqrt{n + d}} \right| & = \left| \frac{x(\sqrt{n+d}-\sqrt{n})}{\sqrt{n (n+d)}} - \frac{y}{\sqrt{n+d}} \right| \\
& \leq \frac{|x|}{n} \frac{n}{\sqrt{n(n+d)}} \left| \sqrt{n+d}-\sqrt{n} \right| + \frac{|y|}{\sqrt{n+d}} \\
& \leq (\mathcal{L}+\ell) \frac{n}{\sqrt{n(n- \mathcal{B})}} (\sqrt{n+ \mathcal{B}}-\sqrt{n}) + \frac{\mc B_1 + \ell \mc B}{\sqrt{n-\mathcal{B}}}
\end{align*}
and the right-hand side tends to $0$ uniformly in $n$.

(3) Fix $\epsilon > 0$. Since $F$ is uniformly continuous, let us pick $\delta > 0$ such that $|F(x) - F(y)| < \epsilon$ whenever $|x-y| < \delta$. 
By Definition \ref{D:fellow-travel}, there exists a finite set $B' \subseteq G$ such that if $\overline{h} = b_1 \overline{g} b_2$, then 
for any $i \leq \min \{ \Vert g \Vert, \Vert h \Vert \} - n$ there exist $b_3, b_4 \in B'$ such that
$$\overline{g_{1} \dots g_{i}} = b_1 \overline{h_{1} \dots h_{i}} b_3$$
$$\overline{g_{1} \dots g_{i+n}} = b_1 \overline{h_{1} \dots h_{i+n}} b_4$$
Thus, 
$$\overline{g_{i+1} \dots g_{i+n}}  = \left(\overline{g_{[1, i]}}\right)^{-1} \overline{g_{[1, i+n]}} = b_3^{-1} \overline{h_{i+1} \dots h_{i+n}} b_4.$$
Hence, since $\varphi$ is uniformly bicontinuous, there exists $N$ such that 
$$\left| \varphi(g_{i+1} \dots g_{i+n}) - \varphi(h_{i+1} \dots h_{i+n}) \right| < \delta$$
for any $n \geq N$. 
Thus, by the choice of $\delta$, 
$$\left| F(\varphi(g_{i+1} \dots g_{i+n})) - F(\varphi(h_{i+1} \dots h_{i+n})) \right| < \epsilon.$$
Since there are at most $ \Vert g \Vert$ terms you can compare and the additional terms (of which there are at most  $2 |\Vert g \Vert - \Vert h \Vert| \leq 2 \mathcal{B}$), are bounded by $\Vert F \Vert_\infty$, 
we have 
$$| \mathcal{S}_n F(g) - \mathcal{S}_n F(h) | \leq \epsilon \Vert g \Vert +  2 \mathcal{B} \Vert F \Vert_\infty.$$
Thus 
$$\left| \frac{\mathcal{S}_n F(g)}{\Vert g \Vert} - \frac{\mathcal{S}_n F(h)}{\Vert h \Vert} \right| \leq \frac{\mathcal{S}_n F(g)}{\Vert g \Vert} \cdot \frac{ \mathcal{B}}{\Vert g \Vert -  \mathcal{B}} + \frac{\epsilon \Vert g \Vert + 2\mathcal{B} \Vert F \Vert_\infty }{\Vert g \Vert - \mathcal{B}}$$
hence, noting that $| \mathcal{S}_n F(g) |\leq \Vert F \Vert_\infty \Vert g \Vert$, we obtain 
$$\limsup_{\Vert g \Vert \to \infty} \left| \frac{\mathcal{S}_n F(g)}{\Vert g \Vert} - \frac{\mathcal{S}_n F(h)}{\Vert h \Vert} \right| \leq \epsilon$$
and, since this is true for any $\epsilon$, the claim follows.
\end{proof}

\begin{remark}[Logarithmic perturbations] \label{rmk:log_change}
As a consequence of the proof that $\varphi$ is uniformly bicontinuous, we observe that for any $\eta > 0$ there is an
$N$ such that if $\Vert g \Vert \geq N$ then for any decomposition $g = g_0 g_1 g_2$ with $\Vert g_0 \Vert, \Vert g_2 \Vert \leq \log N$
we have 
$$\left| \varphi(g) - \varphi (g_1) \right| < \eta.$$
\end{remark}

The main reason why we introduce the bicontinuous functions is the following property. Recall that we denote $\mu_n^{(i)}$ the $n$th step 
distribution of the Markov chain associated to the maximal component $C_i$. 

\begin{lemma} \label{L:same-limit}
Suppose that the structure is thick and let $C_i$ be a maximal component. Let $f \colon \Omega^* \to \mathbb{R}$ be a uniformly bicontinuous function, and suppose that for some constant $a$
$$f(w_n) \to a$$ 
in probability with respect to the Markov chain measure on $C_i$. 
Then for any other maximal component $C_j$, we also have 
$$f(w_n) \to a$$ 
in probability with respect to the Markov chain measure on $C_j$. 
\end{lemma}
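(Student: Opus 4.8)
The key mechanism is thickness: it lets us transfer the convergence statement from one maximal component to another by inserting bounded group elements in front and behind. Concretely, fix the two maximal components $C_i$ and $C_j$. Since $\Gamma$ is thick, for a vertex $v$ in $C_i$ there is a finite set $B$ with $G = B \cdot \overline{\Gamma}_v \cdot B$, and likewise for a vertex $v'$ in $C_j$; combining these (and recalling $\overline{\Gamma}_v$ sits inside the component of $v$), one gets a single finite set $B\subseteq G$ so that for each $g\in G$ there are $b_1,b_2\in B$ and a loop $\ell\in\Gamma_{v'}$ with $\overline g = b_1 \overline\ell b_2$. More usefully for the argument, thickness lets us compare paths in $C_i$ and paths in $C_j$ that evaluate to group elements differing by left and right multiplication by elements of a fixed finite set $B$. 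The plan is to build, from a typical long path $w_n$ in $C_i$, a corresponding path in $C_j$ of roughly the same length whose evaluation differs from $\overline{w_n}$ by a bounded amount, apply uniform bicontinuity of $f$ to conclude $f$ takes nearly the same value on the two paths, and then check that under the respective Parry measures these constructions are measure-preserving enough to push the "convergence in probability to $a$" across.

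**Key steps, in order.** First, set up the bridging paths: using thickness of $\Gamma$, fix vertices $v\in C_i$, $v'\in C_j$ and prime loops realizing, via the evaluation map, the relation $G = B\overline{\Gamma}_{v}B = B\overline{\Gamma}_{v'}B$; in particular there is a fixed path $p$ from $v$ to $v'$ and a fixed path $q$ from $v'$ to $v$ in $\Gamma$ (these exist once we pass to a high enough power / use that maximal components have the right growth, but actually we only need fixed \emph{group elements}, obtained from thickness). Second, given a long path $w_n$ in $C_i$ starting at $v$, produce a path $w_n'$ in $C_j$: the cleanest route is to note that $\overline{w_n} = b_1 \overline{w_n'} b_2$ for suitable $b_1,b_2$ in a fixed finite set $B$ and a path $w_n'$ in $C_j$ of length within $\mathcal B$ of $n$ — this is exactly the input guaranteed by biautomaticity via Lemma \ref{L:norm-control}, once we know such an equation exists, and thickness (plus aperiodicity of the maximal components, available under the semisimple standing assumption) guarantees that we can realize the relevant group elements by paths of the right length in $C_j$. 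Third, apply uniform bicontinuity of $f$: since $\Vert w_n\Vert\to\infty$, for $n$ large we get $|f(w_n) - f(w_n')| < \eta$. Fourth, and this is the measure-theoretic heart, argue that the map $w_n \mapsto w_n'$ (appropriately set up) carries a set of large $\mathbb P_{C_i}$-probability to a set of large $\mathbb P_{C_j}$-probability on which $f$ is close to $a$; since the Parry measures on the two aperiodic primitive components are the unique measures of maximal entropy and have comparable cylinder weights under the bridging correspondence (transition probabilities $p_{ij}=\rho_j/(\lambda\rho_i)$ change by bounded multiplicative factors when you prepend/append fixed bounded paths), the event $\{|f(w_n') - a| \le \eta\}$ has $\mathbb P_{C_j}$-probability $\to 1$. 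Combining the last two steps gives $f(w_n)\to a$ in probability on $C_j$.

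**Main obstacle.** The real difficulty is Step 4 — making the transfer of probability precise. It is easy to produce, for each \emph{individual} long path in $C_i$, a comparable path in $C_j$; what is delicate is doing so in a way that is measurable and (approximately) measure-preserving, so that a statement holding "with $\mathbb P_{C_i}$-probability $\to 1$" becomes one holding "with $\mathbb P_{C_j}$-probability $\to 1$". The expected fix is to work at the level of long finite prefixes and their cylinder sets: prepend a fixed path $q$ (from $v'$ to $v$) to paths in $C_i$ and a fixed path $p$ (from $v$ to $v'$) at the end, observe this gives an injection from length-$n$ paths in $C_i$ (through $v$) to length-$(n+c)$ paths in $C_j$ (through $v'$) whose image has Parry measure bounded below by a constant times the measure of the source, and invoke the ergodicity/mixing of the Parry measure on the aperiodic component $C_j$ (so that "starting at $v'$" versus "the stationary start" is immaterial in the limit) to upgrade "bounded-below measure" to "probability $\to 1$". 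A secondary technical point is ensuring the length discrepancy $|\,\Vert w_n\Vert - \Vert w_n'\Vert\,|$ stays bounded so that $\varphi$-type normalizations by $\sqrt{\Vert g\Vert}$ don't blow up the difference — but this is precisely what Lemma \ref{L:norm-control} and the definition of uniform bicontinuity are designed to handle, so it should slot in cleanly once the probabilistic transfer is in place.
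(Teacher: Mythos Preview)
Your proposal has a genuine structural error and an unnecessary complication, both of which the paper avoids.

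\textbf{The error.} In Steps~1 and~4 you assume the existence of fixed paths $p$ from $v\in C_i$ to $v'\in C_j$ and $q$ from $v'$ to $v$ in $\Gamma$, and you use them to ``prepend/append'' and inject paths of one component into the other. But the paper records (Section~\ref{S:graph-structure}) that in an almost semisimple structure there is \emph{no path between distinct maximal components}. Thickness is a statement about the \emph{group}: $G = B\cdot\overline{\Gamma}_v\cdot B$. It does not produce paths in $\Gamma$ joining $C_i$ to $C_j$, so your bridging construction cannot be carried out at the level of the graph.

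\textbf{The simpler approach.} The paper runs the argument in the opposite direction and purely by counting, which eliminates your Step~4 difficulty entirely. Fix $v\in C_i$. Thickness gives $G = B\,\overline{\Gamma}_v\,B$, so every path $g\in S_n^{(j)}$ (paths of length $n$ lying in $C_j$) satisfies $\overline g = b_1\overline h b_2$ for some $b_1,b_2\in B$ and some loop $h\in\Gamma_v$; by Lemma~\ref{L:norm-control}, $\big|\,\Vert h\Vert - n\,\big|\le\mathcal B$. Uniform bicontinuity then says: if $|f(g)-a|>\epsilon$ and $n$ is large, then $|f(h)-a|>\epsilon/2$. Hence
\[
\#\{\,g\in S_n^{(j)}:|f(g)-a|>\epsilon\,\}\ \le\ |B|^2 \sum_{|k|\le\mathcal B}\#\{\,h\in\Gamma_v\cap S_{n+k}:|f(h)-a|>\epsilon/2\,\}.
\]
Now use the elementary fact (eq.~\eqref{eq:quasi_equiv}) that the Markov measure $\mu_n^{(i)}$ and the normalized counting measure $\#(\,\cdot\,\cap S_n^{(i)})/\lambda^n$ are uniformly comparable. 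Dividing the displayed inequality by $\lambda^n$ converts both sides to probabilities, and the right-hand side tends to $0$ by hypothesis. No measurable map, no measure-preservation estimate, and no mixing of the Parry measure are needed: the whole transfer is a cardinality bound.
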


\begin{proof}
Let $v$ be a vertex of $C_i$, and let $C_j$ be another maximal component. By thickness, we have 
$$S_n^{(j)} \subseteq \bigcup_{b_1, b_2 \in B} \bigcup_{|k| \leq \mathcal{B}} b_1 (\overline{\Gamma_v \cap S_{n + k}}) b_2$$
where $S_n^{(j)}$ is the set of paths of length $n$ which entirely lie in $C_j$.
Hence, for any $\epsilon > 0$ there exists $n_0$ such that for all $n \geq n_0$
$$\#\{ g \in S_n^{(j)} \ : \   |f(g) - a | > \epsilon \} \leq \mathcal{B}^2 \sum_{|k| \leq \mathcal{B}} \#\{ g \in \Gamma_v \cap S_{n+k} \ : \  |f(g) - a | > \epsilon/2 \}.$$
Now, note that there exists $C > 0$ such that 
\begin{align} \label{eq:quasi_equiv}
C^{-1} \frac{\#(A \cap S_n^{(i)})}{\lambda^n} \leq \mu_n^{(i)}(A) \leq C \frac{\#(A \cap S_n^{(i)})}{\lambda^n}
\end{align}
for any $i$, any $n$ and any set $A$, hence, by noting that $\Gamma_v \cap S_{n+k} \subseteq S_{n+k}^{(i)}$, 
$$\mu_n^{(j)}( g  \ : \   |f(g) - a | > \epsilon ) \leq \mathcal{B}^2 C^2 \sum_{|k| \leq \mathcal{B}} \mu_{n+k}^{(i)}( g  \ : \  |f(g) - a | > \epsilon/2 ) $$
Now, since $f(w_n) \to a$ in probability with respect to $\mu^{(i)}_n$, the right-hand side tends to $0$, hence also the left-hand side does.
\end{proof}

\subsection{Uniqueness of drift}
We now show that all maximal components of $\Gamma$ determine the same drift.

\begin{lemma} \label{L:unique-drift}
If the structure is thick, then all $\ell_i$ for all maximal components $C_i$ are the same.
\end{lemma}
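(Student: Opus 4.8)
The plan is to use the function $\psi(g) = d(o,go)/\Vert g \Vert$, which by Lemma \ref{L:uniform} is uniformly bicontinuous, together with the fact (Theorem \ref{T:Markov} applied to each maximal component $C_i$) that $d(o,w_n o)/n \to \ell_i$ in probability with respect to the Markov chain measure on $C_i$. In other words, $\psi(w_n) \to \ell_i$ in probability with respect to $\mu^{(i)}_n$. Now fix two maximal components $C_i$ and $C_j$. By Lemma \ref{L:same-limit}, since $\psi$ is uniformly bicontinuous and $\psi(w_n) \to \ell_i$ in probability with respect to the Markov chain on $C_i$, we also get $\psi(w_n) \to \ell_i$ in probability with respect to the Markov chain on $C_j$. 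But Theorem \ref{T:Markov} applied to $C_j$ tells us $\psi(w_n) \to \ell_j$ in probability with respect to the same measure. Since a sequence of random variables can converge in probability to at most one constant, we conclude $\ell_i = \ell_j$.

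Let me write out the key steps in order. First I would recall that by Theorem \ref{T:Markov}, applied to the primitive graph structure given by each maximal component $C_i$ (using the Remark that semisimplicity makes each maximal component's transition matrix aperiodic, hence primitive), we obtain the central limit theorem with drift $\ell_i$; in particular the weak law of large numbers $d(o, w_n o)/n \to \ell_i$ in probability holds. Equivalently, $\psi(w_n) \to \ell_i$ in $\mathbb{P}^{(i)}$-probability, where $\psi$ is as defined before Lemma \ref{L:uniform}. Second, I would invoke Lemma \ref{L:uniform} to note $\psi$ is uniformly bicontinuous (this uses biautomaticity and properness of $\Gamma$). Third, I would apply Lemma \ref{L:same-limit} with $f = \psi$ and $a = \ell_i$: the hypotheses are exactly that $\psi$ is uniformly bicontinuous and $\psi(w_n) \to \ell_i$ in probability on $C_i$, so the conclusion is that $\psi(w_n) \to \ell_i$ in probability on $C_j$ as well. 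Fourth, since Theorem \ref{T:Markov} applied to $C_j$ gives $\psi(w_n) \to \ell_j$ in probability with respect to the Markov chain on $C_j$, and limits in probability are unique (two constants $\ell_i \neq \ell_j$ would force $\mathbb{P}^{(j)}(|\psi(w_n) - \ell_i| > |\ell_i - \ell_j|/2) \not\to 0$), we conclude $\ell_i = \ell_j$. Since $i,j$ were arbitrary maximal components, all the $\ell_i$ coincide.

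I do not expect any serious obstacle here, as this lemma is essentially a formal consequence of machinery already in place. The one point requiring a small amount of care is confirming that Theorem \ref{T:Markov} indeed yields convergence in probability of $\psi(w_n) = d(o,w_n o)/n$ to $\ell_i$: the stated conclusion of Theorem \ref{T:Markov} is the full central limit theorem for $(d(o,go) - n\ell)/\sqrt{n}$, and the law of large numbers $d(o, w_n o)/n \to \ell_i$ in probability follows immediately from it (if $(d(o,w_n o) - n\ell_i)/\sqrt n$ converges in distribution, then dividing by $\sqrt n$ shows $(d(o,w_n o) - n\ell_i)/n \to 0$ in probability). The mild subtlety is that Theorem \ref{T:Markov} as stated does not explicitly name the drift as $\ell_i$ for the component $C_i$, but this is precisely the identification made in the paragraph preceding Lemma \ref{L:unique-drift}. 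Thus the whole argument reduces to invoking Lemmas \ref{L:uniform} and \ref{L:same-limit} and the uniqueness of limits in probability.
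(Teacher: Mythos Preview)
Your proposal is correct and follows essentially the same route as the paper: invoke uniform bicontinuity of $\psi$ (Lemma \ref{L:uniform}), then transfer the limit between components via Lemma \ref{L:same-limit}. The only minor difference is that the paper obtains $d(o,w_n o)/n \to \ell_i$ from the ergodic theorem (almost surely, hence in probability), whereas you deduce it as a weak law of large numbers consequence of the CLT in Theorem \ref{T:Markov}; both justifications are valid and the remainder of the argument is identical.
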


\begin{proof}
By the ergodic theorem, for each maximal component $C_i$ we have 
$$\lim_{n \to \infty} \frac{d(o, w_n o)}{n} = \ell_i$$
almost surely (and hence in probability) with respect to the Markov chain measure $\mu_n^{(i)}$. 
Since $\psi(g) = \frac{d(o, g o)}{\Vert g \Vert}$ is uniformly bicontinuous by Lemma \ref{L:uniform}, the claim then follows by Lemma \ref{L:same-limit}. 
\end{proof}

By the above lemma, we now define $\varphi$ using $\ell = \ell_i$ for any (equivalently all) $i$.

\subsection{Uniqueness of variance}
In a similar setting, Calegari--Fujiwara \cite{CF} use the notion of \emph{typical path} to show 
that all maximal components have the same variance. 

Here, we adapt this technique by using the uniform bicontinuity of the functions $\frac{\mathcal{S}_n F(g)}{\Vert g \Vert}$, plus thickness of our structure. 
An important difference is that here we use convergence in probability instead of almost sure convergence, as the first one can be ``transferred" from one 
component to another using thickness (see Lemma \ref{L:same-limit}).

\begin{lemma} \label{L:sigma-typical}
Let $C_i$ be a maximal component, and let $\sigma_i$ be the variance of the corresponding CLT. 
Then for any compactly supported, continuous function 
$F \colon \mathbb{R} \to \mathbb{R}$ and for any $n$, there exists a constant $\mathcal{E}_n^{(i)}(F)$ such that 
for any $\epsilon > 0$, 
$$ \lim_{m \to \infty} \mu^{(i)}_{m} \left(g \ : \  \left|  \frac{\mathcal{S}_n F(g)}{\Vert g \Vert}  - \mathcal{E}_n^{(i)}(F) \right| \geq \epsilon \right) \to 0$$
and moreover
$$\lim_{n \to \infty} \mathcal{E}_n^{(i)}(F) = \int_{\mathbb{R}} F(t) \ d \mc N_{\sigma_i}(t).$$
\end{lemma}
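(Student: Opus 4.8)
The plan is to prove the two assertions about $\mathcal{E}_n^{(i)}(F)$ by relating the Birkhoff-type sum $\mathcal S_n F(g)$ along a long random path in $C_i$ to the CLT already established for $C_i$ via Theorem~\ref{T:Markov}. First I would define $\mathcal{E}_n^{(i)}(F)$ as the ``expected value'' of the windowed average: along the stationary Markov chain on $C_i$, the quantity $F(\varphi(g_{[j+1,j+n]}))$ depends only on the block $g_{[j+1,j+n]}$, and since the chain on $C_i$ is aperiodic (Remark preceding the subsection) and hence has a well-defined stationary distribution on blocks of length $n$, one sets
\[
\mathcal{E}_n^{(i)}(F) := \mathbb{E}_{\mathbb{P}^{(i)}}\left[ F\big(\varphi(g_{[1,n]})\big) \right],
\]
the expectation of $F\circ\varphi$ on a length-$n$ block sampled from the stationary measure of $C_i$. (One checks this is finite since $F$ is bounded.)

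The first assertion — convergence in probability of $\frac{\mathcal S_n F(g)}{\Vert g\Vert}$ to $\mathcal{E}_n^{(i)}(F)$ as $m\to\infty$ — is then a consequence of the ergodic theorem (or, more simply, the weak law of large numbers / Birkhoff for the shift) applied to the stationary, ergodic Markov chain on $C_i$: writing $g = g_1\cdots g_m$, the sum $\mathcal S_n F(g) = \sum_{i=0}^{m-n} F(\varphi(g_{[i+1,i+n]}))$ is (up to the $n$ boundary terms, which contribute $O(n/m)\to 0$) a Birkhoff sum of the function $\omega \mapsto F(\varphi(\omega_{[1,n]}))$ under the shift, so $\frac{1}{m}\mathcal S_n F(g) \to \mathcal{E}_n^{(i)}(F)$ a.s., hence in probability, with respect to $\mu^{(i)}_m$. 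A mild care point: $\mu^{(i)}_m$ is the $m$-th step distribution started at a vertex, not the two-sided stationary measure, but by aperiodicity and (\ref{eq:quasi_equiv}) the two are uniformly comparable, so convergence in probability transfers.

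For the second assertion, $\lim_{n\to\infty}\mathcal{E}_n^{(i)}(F) = \int_{\mathbb R} F\,d\mathcal N_{\sigma_i}$, I would argue as follows. By definition $\mathcal{E}_n^{(i)}(F)$ is the average of $F(\varphi(g_{[1,n]})) = F\big(\frac{d(o,g_{[1,n]}o)-\ell n}{\sqrt n}\big)$ over length-$n$ blocks from the stationary measure of $C_i$. Now $\mathbb{P}^{(i)}$ restricted to length-$n$ blocks starting at a fixed vertex $w$ is, up to the uniformly bounded factor in (\ref{eq:quasi_equiv}), proportional to the counting measure on $S_n^{(i)}\cap(\text{paths from }w)$, and summing over the stationary distribution of starting vertices recovers, up to bounded multiplicative error, $\mu_n^{(i)}$ itself. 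More cleanly: $\mathcal{E}_n^{(i)}(F) = \int_G F\!\left(\frac{d(o,go)-\ell n}{\sqrt n}\right) d\mu_n^{(i)}(g)$ by stationarity of the chain on $C_i$ (the stationary distribution of the initial vertex is exactly what makes the law of $g_{[1,n]}$ equal to $\mu_n^{(i)}$). Then Theorem~\ref{T:Markov} applied to the primitive structure $C_i$ — whose drift is $\ell$ by Lemma~\ref{L:unique-drift} and whose variance is $\sigma_i$ by definition — gives precisely
\[
\lim_{n\to\infty} \mathcal{E}_n^{(i)}(F) = \lim_{n\to\infty}\int_G F\!\left(\frac{d(o,go)-\ell n}{\sqrt n}\right) d\mu_n^{(i)}(g) = \int_{\mathbb R} F(t)\, d\mathcal N_{\sigma_i}(t).
\]

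The main obstacle I anticipate is the bookkeeping needed to identify $\mathcal{E}_n^{(i)}(F)$ simultaneously as (a) the a.s. limit of $\frac{1}{m}\mathcal S_n F(w_m)$ under the ergodic theorem and (b) the quantity $\int F\big(\frac{d(o,go)-\ell n}{\sqrt n}\big)d\mu_n^{(i)}(g)$ appearing in Theorem~\ref{T:Markov}; this hinges on correctly using the stationarity (two-sidedness) of the Markov measure on $C_i$ so that the law of an interior length-$n$ block coincides with the $n$-step distribution $\mu_n^{(i)}$, together with the uniform comparability (\ref{eq:quasi_equiv}) to pass between the chain started at a vertex and the stationary chain. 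Everything else is a routine application of the aperiodic ergodic theorem and of Theorem~\ref{T:Markov}.
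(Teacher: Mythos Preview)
Your proposal is correct and follows essentially the same approach as the paper: define $\mathcal{E}_n^{(i)}(F)$ as the stationary expectation $\int_G F(\varphi(g))\, d\mu_n^{(i)}(g)$, invoke the ergodic theorem for the Markov chain on $C_i$ to get convergence in probability of the Birkhoff averages $\frac{1}{m}\mathcal{S}_n F(w_m)$, and then apply Theorem~\ref{T:Markov} to obtain the limit as $n\to\infty$. The paper's argument is more terse but structurally identical; your care point about $\mu_m^{(i)}$ versus the stationary chain is unnecessary since in the paper $\mu_n^{(i)}$ is already the distribution of the chain started from the stationary measure $\mathbb{P}^{(i)}$, so the identification $\mathcal{E}_n^{(i)}(F)=\int F(\varphi(g))\,d\mu_n^{(i)}(g)$ is immediate.
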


\begin{proof}
Recall that for the Markov chain, by the ergodic theorem the limit
$$\mathcal{E}_n^{(i)}(F) := \lim_{m \to \infty} \frac{1}{m} \sum_{i = 0}^{m-1} F(\varphi(g_{i+1}\dots g_{i+n})) = \int_G F(\varphi(g)) \ d\mu^{(i)}_{n}(g)$$
exists almost surely, hence also in probability. Moreover, since the law of $\varphi(w_{n})$ converges to $\mc N_{\sigma_i}$, we have 
$$\lim_{n \to \infty} \mathcal{E}_n^{(i)}(F) = \lim_{n \to \infty} \int_G F(\varphi(g)) \ d\mu^{(i)}_{n}(g) = \int F(t) \ d \mc N_{\sigma_i}(t).$$
Thus, for any $\epsilon > 0$, 
$$ \lim_{m \to \infty} \mu_{n+m}^{(i)} \left(g  \ : \  \left| \frac{1}{m} \sum_{i = 0}^{m-1} F(\varphi(g_{i+1}\dots g_{i+n}))  - \mathcal{E}_n^{(i)}(F) \right| \geq \epsilon \right) \to 0.$$
Since $\lim_{m \to \infty} \frac{\Vert g \Vert}{m} = \frac{n+m}{m} = 1$ in the above equation, the claim follows. 
\end{proof}

\begin{lemma} \label{L:unique-variance}
Suppose that $G$ has a thick graph structure. Then for any two maximal components $C_i, C_j$ we have $\sigma_i = \sigma_j$.
\end{lemma}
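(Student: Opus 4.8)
The plan is to transfer the CLT for a fixed maximal component $C_i$ to any other maximal component $C_j$ via the uniformly bicontinuous functions $\frac{\mc S_n F(g)}{\Vert g \Vert}$, exactly in the spirit of Lemma \ref{L:unique-drift}, where the same strategy was used with the simpler function $\psi$. First I would fix a compactly supported continuous $F \colon \mathbb{R} \to \mathbb{R}$ and recall from Lemma \ref{L:sigma-typical} that, for each maximal component $C_k$, there is a constant $\mc E_n^{(k)}(F) = \int_G F(\varphi(g))\, d\mu_n^{(k)}(g)$ with $\frac{\mc S_n F(g)}{\Vert g \Vert} \to \mc E_n^{(k)}(F)$ in probability (with respect to the length-$m$ Markov chain measure on $C_k$ as $m \to \infty$), and moreover $\mc E_n^{(k)}(F) \to \int_{\mathbb{R}} F(t)\, d\mc N_{\sigma_k}(t)$ as $n \to \infty$.

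The key step: for each fixed $n$, apply Lemma \ref{L:same-limit} to the function $g \mapsto \frac{\mc S_n F(g)}{\Vert g \Vert}$, which is uniformly bicontinuous by Lemma \ref{L:uniform}. Since this function converges in probability to the constant $\mc E_n^{(i)}(F)$ with respect to the Markov chain on $C_i$, Lemma \ref{L:same-limit} forces it to converge in probability to the \emph{same} constant with respect to the Markov chain on $C_j$. But by Lemma \ref{L:sigma-typical} applied to $C_j$, it converges in probability to $\mc E_n^{(j)}(F)$. A convergent-in-probability sequence has a unique constant limit, so $\mc E_n^{(i)}(F) = \mc E_n^{(j)}(F)$ for every $n$. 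Letting $n \to \infty$ and using the second conclusion of Lemma \ref{L:sigma-typical} on both sides gives
\[
\int_{\mathbb{R}} F(t)\, d\mc N_{\sigma_i}(t) = \lim_{n \to \infty} \mc E_n^{(i)}(F) = \lim_{n \to \infty} \mc E_n^{(j)}(F) = \int_{\mathbb{R}} F(t)\, d\mc N_{\sigma_j}(t).
\]
Since this holds for all compactly supported continuous $F$, the Gaussian measures $\mc N_{\sigma_i}$ and $\mc N_{\sigma_j}$ coincide, hence $\sigma_i = \sigma_j$.

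I expect the only genuine subtlety — and hence the main obstacle — to be making sure the hypotheses of Lemma \ref{L:same-limit} are literally met: that lemma requires convergence in probability of a uniformly bicontinuous function to a \emph{constant}, and Lemma \ref{L:sigma-typical} is stated precisely to provide that constant $\mc E_n^{(i)}(F)$ with $n$ held fixed. One must be careful that the two limits are taken in the correct order — first $m \to \infty$ (the Markov chain length) with $n$ fixed, transferring the constant across components, and only afterwards $n \to \infty$ to recover the Gaussian — since Lemma \ref{L:same-limit} transfers statements about the $m \to \infty$ limit, not about the $n$-dependence. Everything else is bookkeeping: uniform bicontinuity of $\frac{\mc S_n F(g)}{\Vert g \Vert}$ is already established in Lemma \ref{L:uniform}(3), and the passage from equality of integrals against all compactly supported continuous $F$ to equality of the variances is standard weak-convergence reasoning for Gaussians.
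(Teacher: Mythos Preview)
Your proposal is correct and follows essentially the same argument as the paper: fix $F$, use Lemma \ref{L:sigma-typical} to get convergence in probability of $\frac{\mc S_n F(g)}{\Vert g\Vert}$ to $\mc E_n^{(i)}(F)$ on $C_i$, transfer this to $C_j$ via Lemma \ref{L:same-limit} using the uniform bicontinuity from Lemma \ref{L:uniform}, conclude $\mc E_n^{(i)}(F)=\mc E_n^{(j)}(F)$ for all $n$, and then let $n\to\infty$. Your care about the order of the two limits is exactly the point; the paper handles it the same way.
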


\begin{proof}
Fix $F$ continuous and compactly supported, and let $M_n F(g) := \frac{\mathcal{S}_n F(g)}{\Vert g \Vert}$. Fix a maximal component $C_i$. By Lemma \ref{L:sigma-typical}, for any $n$ the function $M_n F(w_m)$ converges in probability w.r.t. the Markov measure for $C_i$ as $m \to \infty$ to some constant $\mathcal{E}_n^{(i)}(F)$. Since $M_n F$ is uniformly bicontinuous by Lemma \ref{L:uniform}, we have by Lemma \ref{L:same-limit} that $M_n F(w_m)$ converges to the same constant in probability with respect to the Markov chain for $C_j$. Hence, $\mathcal{E}_n^{(i)}(F) = \mathcal{E}_n^{(j)}(F)$ for any $n$.
Thus,   
$$\int F(t) \ d \mc N_{\sigma_i}(t)  = \lim_{n \to \infty} \mathcal{E}_n^{(i)}(F) = \lim_{n \to \infty} \mathcal{E}_n^{(j)}(F) =  \int F(t) \ d \mc N_{\sigma_j}(t).$$
Since, this is true for any $F$, we must have $\sigma_i = \sigma_j$.
\end{proof}

\section{The semisimple case}

In this section, we prove our main theorem for semisimple graph structures. This is completed in Theorem \ref{T:main-ss}.

\subsection{Convergence to the Markov measure}
So far our work has been for maximal components of a semisimple graph structure. In this section we consider the whole graph structure, still in the semisimple case.

Let $\Gamma$ be a semisimple graph structure for $G$ with transition matrix $M$ of spectral radius $\lambda > 1$. Let $v_i$ be the vertices of the graph, 
and let $v_0$ be a vertex of large growth, which we take as the initial vertex. 
Then recall that $e_i^T M^n e_j$ is the number of paths of length $n$ from $v_i$ to $v_j$. 
Since $M$ is semisimple, the limit 
$$M_\infty := \lim_{n \to \infty} \frac{M^n}{\lambda^n}$$
exists. In particular, in keeping with notation at the beginning of Section \ref{sec:clt_primitve}, we define 
\begin{align*}
\rho_i := \lim_{n \to \infty} \frac{e_i^T M^n 1}{\lambda^n} \quad \text{and} \quad u_i := \lim_{n \to \infty} \frac{e_0^T M^n e_i}{\lambda^n}.
\end{align*}
By construction, $\rho = (\rho_i)$ satisfies $\rho = M_\infty 1$ and $M \rho = \lambda \rho$, while $u = (u_i)$ satisfies $u^T M = \lambda u^T$.
Finally, $\sum_i u_i = \rho_0$ and $\sum_i u_i \rho_i = \rho_0$. 

Note that vertices $v_i$ for which $\rho_i > 0$ and $u_i > 0$ are precisely vertices of components of maximal growth. The large growth vertices are those with $\rho_i >0$.

As before, we use a standard construction to define a measure $\mathbb{P}$ on the space $\Omega$ of infinite paths starting at any vertex of $\Gamma$. First define the initial distribution of the Markov chain to start at vertex $v_i$ with probability $\pi_i := \frac{u_i \rho_i}{\rho_0}$. 
Then assign an edge from $v_i$ to $v_j$ the probability 
$p_{ij} := \frac{\rho_j }{\lambda \rho_i}$.  Obviously, $\mathbb{P}$ is supported on paths that are entirely contained in components of maximal growth.
We denote as $\mathbb{P}_n$ the distribution of the $n$th step of the Markov chain.  

\begin{remark}
We remark that the induced measure on each maximal component $C$ of $\Gamma$ rescales to give the Markov measure on $C$ previously considered. This follows immediately from the construction.
\end{remark}

The following result relates the Markov measure on the semisimple graph structure to the counting measure.
For its statement, let $v_0$ be any vertex of large growth. For each $n$, consider the path given by selecting uniformly a path $\gamma$ starting at $v_0$ of length $n$, and take its subpath $\tilde{\gamma}$ from position $\log n$ to position $n - \log n$. Let $\tilde{\lambda}_n$ denote the distribution of $\tilde{\gamma}$.  

\begin{lemma} \label{L:counting-markov}
With notation as above, the total variation
$$\Vert \mb P_{n-2\log(n)}  - \tilde{\lambda}_{n}\Vert_{TV} \to 0$$
as $n \to \infty$.
\end{lemma}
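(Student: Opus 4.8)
The plan is to compare the Markov measure $\mathbb{P}_{n-2\log n}$ on paths of length $n - 2\log n$ to the law $\tilde\lambda_n$ of the middle subpath of a uniformly chosen path of length $n$ starting at $v_0$. Both are probability measures on the (finite) set of paths of length $m := n - 2\log n$ lying in maximal components, so it suffices to estimate the difference of the masses they assign to an arbitrary such path $p = (g_1,\dots,g_m)$ and sum the positive part of the difference.

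First I would write down both masses explicitly. For the Markov measure, if $p$ runs from vertex $v_i$ to vertex $v_j$, then
\[
\mathbb{P}_m(p) = \pi_i \prod_{k} p_{\cdot\cdot} = \frac{u_i \rho_i}{\rho_0} \cdot \frac{\rho_j}{\lambda^m \rho_i} = \frac{u_i \rho_j}{\rho_0 \lambda^m},
\]
the product of transition probabilities telescoping. For $\tilde\lambda_n$, a path of length $n$ from $v_0$ whose middle subpath (from position $\log n$ to position $n-\log n$) equals $p$ is specified by a length-$\log n$ prefix from $v_0$ to $v_i$ and a length-$\log n$ suffix starting at $v_j$; the number of such completions is $(e_0^T M^{\log n} e_i)(e_j^T M^{\log n} \mathbf{1})$, and we divide by the total number $e_0^T M^n \mathbf{1}$ of length-$n$ paths from $v_0$. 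So
\[
\tilde\lambda_n(p) = \frac{(e_0^T M^{\log n} e_i)\,(e_j^T M^{\log n} \mathbf{1})}{e_0^T M^n \mathbf{1}}.
\]

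The key step is then the semisimplicity input: $M^k/\lambda^k \to M_\infty = \rho u^T$ as $k \to \infty$, so that $e_0^T M^{\log n} e_i = \lambda^{\log n}(u_i + o(1))$, $e_j^T M^{\log n}\mathbf{1} = \lambda^{\log n}(\rho_j + o(1))$, and $e_0^T M^n \mathbf{1} = \lambda^n(\rho_0 + o(1))$, where the $o(1)$ errors are uniform over the finitely many vertices $i,j$ and tend to $0$ since $\log n \to \infty$. Substituting,
\[
\tilde\lambda_n(p) = \frac{\lambda^{2\log n}(u_i+o(1))(\rho_j+o(1))}{\lambda^n(\rho_0 + o(1))} = \frac{u_i\rho_j}{\rho_0\lambda^{n-2\log n}}(1 + o(1)) = \mathbb{P}_m(p)\,(1+\varepsilon_n),
\]
with $\varepsilon_n \to 0$ uniformly in $p$ (here one must check $\rho_0 \ne 0$, which holds since $v_0$ has large growth, and that $u_i,\rho_j>0$ on maximal components so the ratio is well-defined; paths not in maximal components get mass $0$ under $\mathbb{P}_m$, and their total $\tilde\lambda_n$-mass is $o(1)$ by the same estimate since vertices of small growth contribute a smaller exponential rate). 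Then
\[
\Vert \mathbb{P}_m - \tilde\lambda_n\Vert_{TV} = \sum_{p} (\mathbb{P}_m(p) - \tilde\lambda_n(p))_+ \le |\varepsilon_n| \sum_p \mathbb{P}_m(p) + o(1) = |\varepsilon_n| + o(1) \to 0.
\]

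The main obstacle I anticipate is making the $o(1)$ errors genuinely uniform and handling the non-maximal vertices cleanly: one needs that the convergence $M^k/\lambda^k \to \rho u^T$ is at a definite rate (automatic here since there are finitely many vertices and $\lambda$ is the unique eigenvalue of maximal modulus with the multiplicity condition of semisimplicity), and one needs a bound on the total $\tilde\lambda_n$-mass carried by paths touching small-growth vertices — this should follow from the gap between $\lambda$ and the subdominant growth rate together with $\log n \to \infty$, so that the "leakage" is exponentially small in $\log n$. Everything else is the bookkeeping of telescoping products and a triangle-inequality estimate, so I expect the argument to be short once the uniformity is pinned down.
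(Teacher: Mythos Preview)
Your proof is correct and follows essentially the same approach as the paper: write both measures explicitly on a path from $v_i$ to $v_j$, use semisimplicity to show the ratio tends to $1$ uniformly on paths between maximal-component vertices, and verify that the $\tilde\lambda_n$-mass of paths touching non-maximal vertices is $o(1)$. One minor slip: the identity $M_\infty = \rho u^T$ holds only in the primitive case, not for a general semisimple structure with several maximal components, but you never actually use it---you only invoke the limits $e_0^T M^k e_i/\lambda^k \to u_i$ and $e_j^T M^k \mathbf{1}/\lambda^k \to \rho_j$, which are the definitions---so the argument is unaffected.
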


\begin{proof}
Denote $n' := n - 2 \log n$. Let $\gamma$ be a path in the graph, starting at $v_i$ and ending at $v_j$. Then by definition the proportion of paths of length $n$, starting at $v_0$, that have $\gamma$ as ``middle subpath" of length $n'$ is 
$$\tilde{\lambda}_n(\gamma) = \frac{(e_0^T M^{\log n} e_i) (e_j^T M^{\log n} 1)}{e_0^T M^n 1}.$$
On the other hand, 
$$\mb P_{n'}(\gamma) = \left\{ \begin{array}{ll} \frac{\pi_i \rho_j}{\rho_i \lambda^{n'}} & \textup{If }v_i\textup{ has large growth} \\
0 & \textup{otherwise,}\end{array}\right.$$
which is nonzero if both $v_i$ and $v_j$ belong to a maximal component. 
In this case, 
\begin{align*} 
\frac{d\mb P_{n'}}{d \tilde{\lambda}_{n}}(\gamma) &= \frac{\lambda^{\log n}}{e_0^T M^{\log n}e_i}\cdot \frac{\lambda^{\log n}}{e_j^T M^{\log n} 1} \cdot \frac{e_0^T M^n 1}{\lambda^n} \cdot \frac{\pi_i \rho_j}{\rho_i}\\
&\longrightarrow \frac{1}{u_i} \cdot \frac{1}{\rho_j} \cdot \frac{\rho_0}{1} \cdot \frac{\pi_i \rho_j}{\rho_i} =1
\end{align*}
using that $\pi_i =\frac{u_i \rho_i}{\rho_0}$. Moreover, if $S_n^{i,j}$ denotes the set of paths of length $n'$ from $v_i$ to $v_j$, we have
\begin{align*}
\tilde{\lambda}_{n}(S_n^{i,j})
& =  \frac{(e_i^T M^{n'} e_j) (e_0^T M^{\log n} e_i) (e_j^T M^{\log n} 1)}{e_0^T M^n 1} \\
& \leq \frac{(e_i^T M^{n'} 1) (e_0^T M^{\log n} e_i) (e_j^T M^{\log n} 1)}{e_0^T M^n 1}  \\
& \to \frac{\rho_i u_i \rho_j}{\rho_0},
\end{align*}
hence such a probability tends to $0$ unless both $v_i$ and $v_j$ belong to a maximal component. 

Finally, if we denote as $\mathcal{L}_n$ the set of paths of length $n'$ which lie entirely in 
a maximal component, we have for any set $A$
\begin{align*}
\left| \mb P_{n'}(A)  - \tilde{\lambda}_{n}(A) \right| 
& \leq \sum_{x \in A \cap \mathcal{L}_n} \left| \frac{ \mb P_{n'}(x)}{\tilde{\lambda}_{n}(x)}\tilde{\lambda}_{n}(x) - \tilde{\lambda}_{n}(x) \right| + 
\tilde{\lambda}_{n}(A \setminus \mathcal{L}_n)  \\
& \leq \sup_{x \in \mathcal{L}_n} \left| \frac{\mb P_{n'}(x)}{\tilde{\lambda}_{n}(x)} - 1 \right| + \tilde{\lambda}_{n}(\mathcal{L}_n^c)
\end{align*}
and both terms tend to $0$ as $n \to \infty$, independently of $A$.
\end{proof}

\subsection{Central limit theorem for the counting measure in the semisimple case}

We are now ready to prove the following. For its statement, let $S_n$ denote the set of length $n$ paths beginning at the initial vertex $v_0$.

\begin{theorem}  \label{T:main-ss}
Let $\Gamma$ be a semisimple, thick, biautomatic graph structure for a nonelementary group $G$ of isometries of a $\delta$-hyperbolic space $(X, d)$, and let $o \in X$ be a base point.
Then there exists $\ell \geq 0$, $\sigma \geq 0$ such that for any $a < b$ we have
$$\lim_{n \to \infty} \frac{1}{\# S_n} \# \left\{ g \in S_n \ : \   \frac{d(o, g o) - n \ell}{\sqrt{n}}  \in [a, b] \right\} =  \int_a^b d \mc N_\sigma(t).$$
\end{theorem}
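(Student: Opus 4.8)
The plan is to reduce the CLT for the counting measure on spheres $S_n$ of the graph structure to the CLT for the Markov measure $\mathbb{P}$ proved in Theorem \ref{T:Markov} (applied componentwise) and reconciled across components in Section 5 (uniqueness of drift and variance). The bridge is Lemma \ref{L:counting-markov}, which says that the ``middle subpath'' law $\tilde\lambda_n$ (obtained by chopping $\log n$ off each end of a uniformly random length-$n$ path from $v_0$) is asymptotically total-variation close to the Markov distribution $\mathbb{P}_{n-2\log n}$. So the first step is to write $g = g_0 \tilde g g_2$ for a uniformly random $g \in S_n$, with $\|g_0\|,\|g_2\| \le \log n$, and use Remark \ref{rmk:log_change}: since $\varphi$ is uniformly bicontinuous, $|\varphi(g) - \varphi(\tilde g)| \to 0$, so it suffices to prove the CLT for $\varphi(\tilde g)$ where $\tilde g$ is distributed according to $\tilde\lambda_n$.

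Next I would transfer from $\tilde\lambda_n$ to $\mathbb{P}_{n'}$ (with $n' = n - 2\log n$) using Lemma \ref{L:counting-markov}: since $\|\tilde\lambda_n - \mathbb{P}_{n'}\|_{TV}\to 0$ and $F$ (a bounded continuous approximation to $\mathbf{1}_{[a,b]}$, or one works directly with the convergence-in-distribution formulation against continuous compactly supported $F$) is bounded, we have
\[
\left| \int F(\varphi(g))\, d\tilde\lambda_n(g) - \int F(\varphi(g))\, d\mathbb{P}_{n'}(g) \right| \le 2\|F\|_\infty \, \|\tilde\lambda_n - \mathbb{P}_{n'}\|_{TV} \to 0.
\]
Here one must be slightly careful that $\varphi$ is defined with the shifted length $n'$ in the denominator versus $n$; but $n'/n \to 1$ and $d(o,\tilde g o) - d(o, g o) = O(\log n)$ while $n\ell - n'\ell = O(\log n)$, so $\varphi(\tilde g)$ computed with either normalization differ by $o(1)$ (this is essentially the same estimate as in the proof of Lemma \ref{L:uniform}(2)). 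Then the Markov-chain CLT: since $\mathbb{P}$ is the convex combination $\mathbb{P} = \sum_i \frac{\#\{\text{vertices}\}}{\cdots}\mathbb{P}^{(i)}$ supported on the maximal components $C_i$, and Theorem \ref{T:Markov} gives a CLT for each $\mathbb{P}^{(i)}$ with constants $\ell_i, \sigma_i$, while Lemmas \ref{L:unique-drift} and \ref{L:unique-variance} force $\ell_i = \ell$ and $\sigma_i = \sigma$ for all maximal $i$, the CLT holds for $\mathbb{P}$ with those common constants. (One should note $\mathbb{P}_{n'}$ here means the $n'$-step distribution of the Markov chain on the whole graph, which restricts to the rescaled Markov measures on each $C_i$, per the Remark after the definition of $\mathbb{P}$.)

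Finally I would unwind the chain of approximations: convergence in distribution of $\varphi(g)$ under $\mathbb{P}_{n'}$ to $\mathcal N_\sigma$, plus the $o(1)$ discrepancies above, gives convergence in distribution of $\frac{d(o,go)-n\ell}{\sqrt n}$ under the uniform measure on $S_n$; passing from the $F$-integral formulation to the statement with indicator of $[a,b]$ is routine since $\mathcal N_\sigma$ has no atoms (when $\sigma>0$; when $\sigma=0$ one argues directly). I expect the main obstacle to be bookkeeping the normalization mismatch between $\sqrt n$ and $\sqrt{n'}$ (and between the displacement of $g$ and of $\tilde g$) simultaneously with the total-variation transfer — none of these is deep, but they must be combined in the right order so that each error term is genuinely $o(1)$ uniformly; the conceptual heavy lifting (the CLT itself, and the cross-component compatibility of $\ell$ and $\sigma$) has already been done in Sections 4 and 5.
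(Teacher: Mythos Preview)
Your proposal is correct and follows essentially the same route as the paper: decompose $g = g_0 g_1 g_2$ with $\log n$ prefix/suffix, use Remark \ref{rmk:log_change} to replace $\varphi(g)$ by $\varphi(g_1)$, use Lemma \ref{L:counting-markov} to pass from $\tilde\lambda_n$ to $\mathbb{P}_{n-2\log n}$, and invoke the Markov CLT (Theorem \ref{T:Markov}) componentwise together with Lemmas \ref{L:unique-drift} and \ref{L:unique-variance} to ensure a common $(\ell,\sigma)$. The only cosmetic difference is that the paper absorbs your normalization bookkeeping ($\sqrt{n}$ vs.\ $\sqrt{n'}$) into the definition $\varphi(g) = \frac{d(o,go)-\ell\|g\|}{\sqrt{\|g\|}}$ and works directly with cumulative distribution functions at $x$ and $x+\epsilon$ rather than test functions $F$.
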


In the following proof and later on, we will use the notation $N_\sigma(x) := \int_{-\infty}^x d\mc N_\sigma(t)$.

\begin{remark} \label{R:one-comp}
Note that if, additionally, the graph structure $\Gamma$ is semisimple and has a unique maximal component, Theorem \ref{T:main-ss}
holds even without assuming that the structure is biautomatic.
\end{remark}

\begin{proof}
Let $C_1, \dots, C_k$ be the maximal components, and let $\mu_n^{(i)}$ be the $n$th step distribution for the Markov chain 
associated to that component, as in Section \ref{sec:clt_primitve}. Theorem \ref{T:Markov} shows a CLT for all such measures, and by Lemmas \ref{L:unique-drift} and \ref{L:unique-variance} all such measures have the same drift and variance, that we denote by $\ell, \sigma$. 

Now, since the starting probability $(\pi_i)$ in the above construction is nonzero precisely on the set of vertices which belong to a maximal component, 
there exist weights $c_i \geq 0$ with $\sum_i c_i = 1$ such that
$$\mb P_n = \sum_{i = 1}^k c_i \mu_n^{(i)}$$
for any $n$. Thus, for any $x \in \mb R$, 
\begin{equation} \label{E:MC-CLT}
\mb P_n (g \ : \ \varphi(g) \leq x ) = \sum_{i = 1}^k c_i \mu_n^{(i)} (g \ : \ \varphi(g) \leq x ) \to N_\sigma(x),
\end{equation}
where we recall that 
\[
\varphi(g) = \frac{d(o, go) - \ell \Vert g \Vert}{\sqrt{\Vert g \Vert}}.
\]

Now, we use that the counting measure can be approximated by the Markov chain measure.
If $g$ is a path of length $n$, we denote as $g = g_0 g_1 g_2$ where $g_0$ is the prefix of length $\log n$, 
$g_1$ is the middle part of length $n - 2 \log n$ and $g_2$ is the final part of length $\log n$.
By Remark \ref{rmk:log_change}, there exists $n_0$ such that 
\begin{equation} \label{E:log-change}
| \varphi(g) - \varphi(g_1) | \leq \epsilon
\end{equation}
for any $n \geq n_0$ and $g$ with $\Vert g \Vert = n$. 

Fix $x \in \mb R$ and $\epsilon > 0$. Then we have 
\begin{align*}
\lambda_n ( g \ : \ \varphi(g) \leq x ) & = \lambda_n ( g = g_0 g_1 g_2 \ : \ \varphi(g) \leq x )  \\
\intertext{and, by eq. \eqref{E:log-change}, for $n$ large }
& = \lambda_n ( g = g_0 g_1 g_2 \ : \ \varphi(g_1) \leq x + \epsilon ) \\
\intertext{then by definition of $\tilde{\lambda}_n$}
& = \tilde{\lambda}_n ( g_1 \ : \ \varphi(g_1) \leq x + \epsilon ) \\
\intertext{and by Lemma \ref{L:counting-markov}, for $n$ large,}
& = \mb P_{n - 2 \log n} ( g_1 \ : \ \varphi(g_1) \leq x + \epsilon ) + \epsilon. \\
\intertext{Hence, by eq. \eqref{E:MC-CLT} we obtain}
\limsup_{n \to \infty} \lambda_n ( g \ : \ \varphi(g) \leq x) & \leq N_\sigma(x + \epsilon) + \epsilon
\intertext{and, by taking $\epsilon$ smaller and smaller and using the continuity of $N_\sigma$,}
\limsup_{n \to \infty}  \lambda_n ( g \ : \ \varphi(g) \leq x) & \leq N_\sigma(x).
\end{align*}
The lower bound follows analogously.
\end{proof}

Indeed, the same proof shows the following stronger statement. Let $\lambda_n^{(i)}$ denote the counting measure on the set of 
paths of length $n$ starting at $v_i$. 

\begin{corollary}  \label{T:main-ss-every}
Let $\Gamma$ be a semisimple, thick, biautomatic graph structure for a nonelementary group $G$ of isometries of a $\delta$-hyperbolic space $(X, d)$, and let $o \in X$ be a base point.
Then there exists $\ell \geq 0$, $\sigma \geq 0$ such that for any vertex $v_i$ of large growth for $\Gamma$ and any $a < b$ we have
$$\lim_{n \to \infty} \lambda_n^{(i)} \left( g \ : \   \frac{d(o, g o) - \ell \Vert g \Vert}{\sqrt{\Vert g \Vert}}  \in [a, b] \right) =  \int_a^b d N\sigma(t).$$ 
\end{corollary}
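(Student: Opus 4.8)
The plan is to deduce Corollary \ref{T:main-ss-every} from the argument already used to prove Theorem \ref{T:main-ss}, observing that nothing in that proof genuinely required the initial vertex to be $v_0$ — it only required the initial vertex to have large growth, so that almost every long path eventually enters a maximal component. First I would recall that for any vertex $v_i$ of large growth, the counting measure $\lambda_n^{(i)}$ on length-$n$ paths starting at $v_i$ can be compared, via the same truncation trick, to the Markov measure: define $\tilde\lambda_n^{(i)}$ to be the distribution of the middle subpath (from position $\log n$ to position $n - \log n$) of a uniformly chosen length-$n$ path from $v_i$, and note that the proof of Lemma \ref{L:counting-markov} goes through verbatim with $e_0$ replaced by $e_i$, since the only properties used were that $v_i$ has large growth (so $\rho_i > 0$) and the semisimplicity of $M$ (which gives the limits $M^n/\lambda^n \to M_\infty$ and hence the ratios $\frac{d\mb P_{n'}}{d\tilde\lambda_n^{(i)}} \to 1$ on paths lying in maximal components). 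Thus $\Vert \mb P_{n - 2\log n} - \tilde\lambda_n^{(i)} \Vert_{TV} \to 0$.

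Next I would combine this with the CLT for the Markov measure. By Theorem \ref{T:Markov} and Lemmas \ref{L:unique-drift}, \ref{L:unique-variance}, the Markov measures $\mu_n^{(j)}$ on the maximal components all satisfy a CLT with the same drift $\ell$ and variance $\sigma$; since $\mb P_n = \sum_j c_j \mu_n^{(j)}$ for fixed weights $c_j \ge 0$ summing to $1$, equation \eqref{E:MC-CLT} gives $\mb P_n(g : \varphi(g) \le x) \to N_\sigma(x)$ for every $x$, where $\varphi(g) = \frac{d(o,go) - \ell\Vert g\Vert}{\sqrt{\Vert g \Vert}}$. Then, exactly as in the proof of Theorem \ref{T:main-ss}: writing a length-$n$ path from $v_i$ as $g = g_0 g_1 g_2$ with $\Vert g_0\Vert = \Vert g_2 \Vert = \log n$, Remark \ref{rmk:log_change} gives $|\varphi(g) - \varphi(g_1)| \le \epsilon$ for $n$ large, so
\[
\lambda_n^{(i)}(g : \varphi(g) \le x) = \tilde\lambda_n^{(i)}(g_1 : \varphi(g_1) \le x + \epsilon) + o(1) = \mb P_{n - 2\log n}(g_1 : \varphi(g_1) \le x + \epsilon) + \epsilon + o(1),
\]
and letting $n \to \infty$ then $\epsilon \to 0$, using continuity of $N_\sigma$, yields $\limsup_n \lambda_n^{(i)}(g : \varphi(g) \le x) \le N_\sigma(x)$; the matching lower bound is symmetric. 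This gives convergence of the distribution function at every point, hence convergence on every interval $[a,b]$, which is the assertion of the corollary.

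The only real point requiring care — and the place I expect to spend the most effort — is verifying that Lemma \ref{L:counting-markov} truly localizes at a single large-growth vertex, i.e. that the limits $\frac{\lambda^{\log n}}{e_i^T M^{\log n} e_i}$ and related ratios behave as claimed and that the contribution of paths from $v_i$ that \emph{fail} to enter a maximal component is negligible. This is exactly the content of the estimates $\tilde\lambda_n^{(i)}(S_n^{i',j}) \to \rho_{i'} u_{i'}\rho_j / \rho_0$ and $\tilde\lambda_n^{(i)}(\mathcal L_n^c) \to 0$, which follow from semisimplicity precisely as before once one checks that $v_i$ having large growth means $e_i^T M^{\log n} e_{i'} / \lambda^{\log n}$ has a positive limit when $v_i, v_{i'}$ can reach a common maximal component. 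Since all of this is formally identical to the $v_0$ case, I would either state the corollary as an immediate consequence of the proof of Theorem \ref{T:main-ss} with $v_0$ replaced by $v_i$, or spell out the one modified limit computation and leave the rest to the reader.
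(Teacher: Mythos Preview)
Your approach is essentially the paper's: rerun the proof of Theorem \ref{T:main-ss} with $v_0$ replaced by an arbitrary large-growth vertex. The one point the paper makes explicit, and which your write-up blurs, is that replacing $e_0$ by $e_i$ in Lemma \ref{L:counting-markov} forces you to replace the Markov measure $\mb P$ as well, by a vertex-dependent measure $\mb P^{(i)}$ with the same transition probabilities $p_{jk} = \rho_k/(\lambda \rho_j)$ but new starting probabilities $\pi_j^{(i)} := u_j^{(i)}\rho_j/\rho_i$, where $u_j^{(i)} := \lim_n e_i^T M^n e_j/\lambda^n$. With the original $\mb P$ (built from $v_0$), the ratio $d\mb P_{n'}/d\tilde\lambda_n^{(i)}$ on a path entering a maximal component at $v_j$ converges to $u_j\rho_i/(u_j^{(i)}\rho_0)$, not to $1$, so the total-variation statement you wrote can fail as stated; correspondingly, your limit $\tilde\lambda_n^{(i)}(S_n^{i',j}) \to \rho_{i'} u_{i'}\rho_j/\rho_0$ should read $\rho_{i'} u_{i'}^{(i)}\rho_j/\rho_i$. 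Once $\mb P^{(i)}$ is used instead, nothing else changes: $\mb P_n^{(i)}$ is again a convex combination $\sum_k c_k^{(i)}\mu_n^{(k)}$ of the component Markov chains (with $i$-dependent weights summing to $1$), so the analogue of \eqref{E:MC-CLT} holds for it, and your truncation argument via Remark \ref{rmk:log_change} finishes the proof exactly as you describe.
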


\begin{proof}
Let us fix a vertex $v_\ell$ of large growth for $M$. Then we can define a Markov chain measure $\mb P^{(\ell)}$ on the space of infinite paths 
as follows. The transition probabilities will always be the same $p_{ij} = \frac{\rho_j}{\lambda^p \rho_i}$, while 
for each vertex $v_\ell$ one finds a different set of starting probabilities $\pi_i^{(\ell)}$ given by 
$$\pi_i^{(\ell)} := \frac{u_i^{(\ell)} \rho_i}{\rho_\ell}, \quad \text{where} \quad u_i^{(\ell)} := \lim_{n \to \infty} \frac{e_\ell^T M^{n} e_i}{\lambda^{n}}.$$ 
Just as before, there exist constants $c_i^{(\ell)} \geq 0$ such that $\sum_i c_i^{(\ell)} = 1$ and 
$$\mb P_n^{(\ell)} = \sum_{i =1}^k c_i^{(\ell)} \mu_n^{(i)}.$$
The proof then proceeds exactly as for Theorem \ref{T:main-ss}. 
\end{proof}

\section{The CLT for displacement and translation length} \label{sec:almost_semi}

Now suppose that $\Gamma$ is an almost semisimple graph structure for $G$ with transition matrix $M$. Then $M$ has some period $p\ge1$ such that $M^p$ is semisimple. We denote by $\Gamma^p$ the corresponding $p$ step graph structure on $G$. That is, $\Gamma^p$ is the graph with the same vertex set as $\Gamma$ and an edge joining $v_i$ to $v_j$ for each directed path from $v_i$ to $v_j$ of length $p$, whose label is the word in $G$ spelled by the corresponding path. The transition matrix for $\Gamma^p$ is $M^p$, hence $\Gamma$ is a semisimple graph structure for $G$.

Since the previous results require this structure to be thick, we need the following lemma.

\begin{lemma} \label{lem:passing_to_p}
The following properties pass to the $p$ step graph structure:
\begin{itemize}
\item If $v$ is a large growth vertex of $\Gamma$, then its also a large growth vertex of $\Gamma^p$.
\item If $\Gamma$ is a thick structure, then $\Gamma^p$ is also thick. 
\item If $\Gamma$ is biautomatic, then so is $\Gamma^{p}$.
\end{itemize}
\end{lemma}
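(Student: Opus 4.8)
The plan is to verify each of the three bullet points essentially by unwinding definitions, since all three are ``soft'' properties that survive passing from $\Gamma$ to the power graph $\Gamma^p$.

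\textbf{Large growth vertices.} First I would recall that $v$ has large growth in $\Gamma$ iff the exponential growth rate of paths of length $n$ starting at $v$ equals $\lambda$, the leading eigenvalue of $M$. Since the transition matrix of $\Gamma^p$ is $M^p$, whose leading eigenvalue is $\lambda^p$, and since paths of length $m$ in $\Gamma^p$ starting at $v$ correspond bijectively to paths of length $pm$ in $\Gamma$ starting at $v$, we have
\[
\lim_{m \to \infty} \tfrac{1}{m}\log \#\{\text{length-}m\text{ paths in }\Gamma^p\text{ from }v\} = p \lim_{n\to\infty}\tfrac1n \log\#\{\text{length-}n\text{ paths in }\Gamma\text{ from }v\},
\]
so the left side equals $\log \lambda^p$ exactly when the growth rate in $\Gamma$ equals $\log\lambda$. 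Thus $v$ has large growth in $\Gamma^p$ iff it does in $\Gamma$ (in particular the forward implication claimed holds). A small wrinkle: $\Gamma^p$ being semisimple means $\lambda^p$ is the unique maximal-modulus eigenvalue (real positive), so the notion of ``large growth vertex'' of $\Gamma^p$ makes sense as stated; I would just note this.

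\textbf{Thickness.} Here I would observe that for a vertex $v$ in a maximal component of $\Gamma$, the loop semigroup $\Gamma^p_v$ of $v$ in $\Gamma^p$ is exactly the subsemigroup of $\Gamma_v$ consisting of loops whose length is divisible by $p$. Since $v$ lies in a maximal component, which (the transition matrix being almost semisimple) is aperiodic — wait, components of $\Gamma$ need not be aperiodic, but a maximal component of $\Gamma$ breaks into maximal components of $\Gamma^p$ — one of these contains $v$. The key point is that $\overline{\Gamma^p_v}$ still has finite coindex in $\overline{\Gamma_v}$: concretely, every loop $\ell$ at $v$ in $\Gamma$ can be completed to a loop of length divisible by $p$ by appending a fixed finite collection of ``padding'' loops (one of each residue class mod $p$ that is realizable as a loop length at $v$), so $\overline{\Gamma_v} \subseteq \overline{\Gamma^p_v} \cdot F$ for a finite set $F$. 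Combined with the thickness of $\Gamma$, i.e. $G = B\cdot\overline{\Gamma_v}\cdot B$, this gives $G = B \cdot \overline{\Gamma^p_v}\cdot (FB)$, and symmetrically on the left, so $\Gamma^p$ is thick with the finite set $B \cup FB \cup BF'$ (for appropriate finite $F, F'$). I should be careful that the maximal components of $\Gamma^p$ are precisely the pieces of maximal components of $\Gamma$, which follows from the growth-rate computation in the first bullet together with \cite{GTT2}'s description of the global structure.

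\textbf{Biautomaticity.} This is the easiest: biautomaticity (Definition \ref{D:fellow-travel}) quantifies over \emph{all} finite-length paths $g, h$, and a finite-length path in $\Gamma^p$ is in particular a finite-length path in $\Gamma$ (of length $p$ times as long), with the same evaluation in $G$ and the same prefixes at the sampled positions $i$ that are multiples of $p$ — and for $i$ not a multiple of $p$, the prefix $g^{\Gamma^p}(i)$ in $\Gamma^p$ coincides with the prefix $g^\Gamma(pi)$... more carefully: I would fix $B$, let $C$ be the constant from Definition \ref{D:fellow-travel} applied to $\Gamma$ and $B$, and check that the same $C$ works for $\Gamma^p$, since for a path $g$ in $\Gamma^p$ the length-$i$ prefix evaluates to the same group element as the length-$pi$ prefix of the corresponding path in $\Gamma$, and the fellow-traveling bound $d_G(\overline{g(i)}, b_1\overline{h(i)}) \le C$ is then immediate from the $\Gamma$-version. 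No new constant is needed.

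\textbf{Main obstacle.} The genuinely delicate point is the thickness bullet: I need that $\overline{\Gamma^p_v}$ has finite ``coindex'' inside $\overline{\Gamma_v}$, which relies on $v$ lying in a component all of whose sufficiently long loop lengths hit every residue class mod $p$ that occurs at all — essentially an aperiodicity/number-theoretic statement about the set of return times to $v$. Since $\Gamma$ is almost semisimple and $v$ is in a maximal component, $M^p$ restricted to (the $\Gamma^p$-component of) $v$ is primitive, which is exactly what guarantees the padding loops exist; I would spell this out using the primitivity already noted for maximal components. Everything else is definition-chasing.
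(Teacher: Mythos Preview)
Your proposal is correct and follows essentially the same route as the paper for all three bullets: growth-rate considerations for the first, padding loops for thickness, and restriction of the fellow-traveling inequality to multiples of $p$ for biautomaticity.

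The one point worth flagging is your ``main obstacle.'' You resolve it by invoking primitivity of $M^p$ on the component of $v$, which works but is heavier than needed. The paper dispatches it with a purely algebraic observation: the length map $\Gamma_v \to \mathbb{Z}/p\mathbb{Z}$ is a semigroup homomorphism, and its image is a subsemigroup of a finite group, hence itself a group. So for every realized residue $r$ there is already a loop in $\Gamma_v$ of residue $-r$, and concatenating gives a loop of length divisible by $p$. (Concretely, if $\gamma$ has length $r$, then $\gamma^{p-1}$ has length $\equiv -r$.) This immediately gives $\overline{\Gamma_v} \subseteq \overline{\Gamma_{v,p}} \cdot B'$ for a finite $B'$ without any appeal to primitivity or to which residues actually occur; one-sided padding then suffices, so your ``symmetrically on the left'' is unnecessary. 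For the first bullet the paper instead extends a path from $v$ into a maximal component by fewer than $p$ edges inside that component to make the total length divisible by $p$; your growth-rate computation is an equally valid alternative.
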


\begin{proof}
The first statement holds because any path from $v$ that ends in a component of maximal growth can be extended to a path whose length is a multiple of $p$ by adding on a path in that component of length less than $p$.

Now suppose that $\Gamma$ is thick.
Let $v$ be a vertex in a maximal component of $\Gamma^p$. Then $v$ is also a vertex in a maximal component of $\Gamma$. Let $\Gamma_{v, p}$ be the semigroup 
of loops based at $v$ of lengths multiple of $p$. Consider the semigroup homomorphism 
$$f \colon \Gamma_v \to \mathbb{N} \to \mathbb{N}/p\mathbb{N}$$
given by taking the length and reducing it mod $p$. Clearly, the image of $f$ is a subsemigroup of $\mathbb{N}/p\mathbb{N}$, which is a finite group,
hence the image is also a group. Let $\gamma_i, \dots, \gamma_k \subseteq \Gamma_v$ be a set of representatives for each remainder class in the image of $f$. Now, let $\gamma \in \Gamma_v$. Then $\Vert \gamma \Vert$ belongs to the image of $f$, hence there exists $\gamma_i$ (the representative of the inverse modulo $p$), such that $\gamma \gamma_i$ has length multiple of $p$, hence it belongs to $\Gamma_{v, p}$. Hence, by setting 
$B'$ the set $\{ \gamma_i^{-1} \ : \ 1 \leq i \leq k \}$, we have $\Gamma_v \subseteq \Gamma_{v, p} B'$ in the group. 
Since $\Gamma$ is thick, there exists $B''$ such that $G = B'' \Gamma_v B''$, hence also $G = B'' \Gamma_{v, p} B' B''$, 
hence $\Gamma^p$ is also thick. 

Finally, note that any path $g$ in $\Gamma^p$ of length $k$ can be naturally thought of as a path $g_{\dagger}$ in $\Gamma$ of length $pk$ such that for all $i\ge 0$: $\overline{g_{\dagger}(pi)} = \overline g(i)$. Hence, if $B$ and $C\ge 0$ are fixed as in the statement of biautomaticity of $\Gamma$ and $g,h$ are paths in $\Gamma^p$ with $\overline g = b_1\overline h b_2$ for $b_1,b_2 \in B$, then we also have that $\overline {g_{\dagger}} = b_1\overline {h_{\dagger}} b_2$. Then biautomaticiy of $\Gamma$ implies that
\[
d_G (\overline{g_{\dagger}(i)}, b_1 \overline{h_{\dagger}(i)} ) \le C,
\]
for all $i \ge 0$. Hence, restricting to multiples of $p$ completes the proof.
 \end{proof}

Now, let us consider the semisimple matrix $M^p$. Note that irreducible components of $M^p$ may be proper subsets of irreducible components of $M$. 
Given a vertex $v_i$, let us denote by $\lambda_{k}^{(i)}$ the counting measure on paths starting at $v_i$ of length $k$ for $\Gamma$. Note that if $k = np$, then this also counts paths of length $n$ in $\Gamma^p$ starting at $v_i$.

By applying Theorem \ref{T:main-ss-every} to $\Gamma^p$, we immediately obtain:

\begin{corollary} \label{T:CLT-p-count}
Let $\Gamma$ be a thick, biautomatic structure of period $p$ for a nonelementary group $G$ of isometries of a $\delta$-hyperbolic space $(X, d)$, and let $o \in X$ be a base point. Then there exists $\ell, \sigma$ such that the following holds. 
For any vertex $v_i$ of large growth for $\Gamma$ and for any $x$, we have 
$$\lambda_{pn}^{(i)} \left(g \ : \ \frac{d(o, go) - \ell \Vert g \Vert}{\sqrt{\Vert g \Vert}} \leq x \right) \to \int_{-\infty}^x d \mc N_{\sigma}(t)$$
as $n \to \infty$.
\end{corollary}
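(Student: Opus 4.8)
The plan is to derive Corollary \ref{T:CLT-p-count} directly from Corollary \ref{T:main-ss-every} applied to the $p$-step structure $\Gamma^p$, once one checks that $\Gamma^p$ satisfies the hypotheses of that corollary and that counting length-$n$ paths in $\Gamma^p$ is literally counting length-$pn$ paths in $\Gamma$. For the hypotheses: the transition matrix of $\Gamma^p$ is $M^p$, which is semisimple by the defining property of the period $p$, so $\Gamma^p$ is a semisimple graph structure for $G$; by Lemma \ref{lem:passing_to_p}, thickness and biautomaticity of $\Gamma$ pass to $\Gamma^p$, and any large growth vertex of $\Gamma$ remains a large growth vertex of $\Gamma^p$; and the group $G$, the action $G \curvearrowright X$, and the base point $o$ are unchanged, so the action stays nonelementary. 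Hence all the hypotheses of Corollary \ref{T:main-ss-every} hold for $\Gamma^p$.

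Next I would record the path dictionary. By construction of $\Gamma^p$, a path of length $n$ in $\Gamma^p$ starting at $v_i$ is exactly a path of length $pn$ in $\Gamma$ starting at $v_i$, and the two evaluation maps agree: the $\Gamma^p$-label of an edge is the word in $G$ spelled by the corresponding length-$p$ subpath of $\Gamma$, so $\overline g$ is the same group element in either reading. Consequently the counting measure on length-$n$ paths of $\Gamma^p$ starting at $v_i$ is precisely the measure $\lambda_{pn}^{(i)}$ of the statement, and on its support the $\Gamma$-path-length of $g$ is the constant $pn$.

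Finally I would invoke Corollary \ref{T:main-ss-every} for $\Gamma^p$: it produces constants $\ell'$ and $\sigma'$ such that, for every large growth vertex $v_i$, the distribution of $(d(o, go) - \ell' n)/\sqrt n$ under the $\Gamma^p$-counting measure on length-$n$ paths converges to $\mc N_{\sigma'}$. Translating via the dictionary, and noting that the statement normalizes by the $\Gamma$-path-length $\Vert g\Vert = pn$ while the corollary normalizes by the $\Gamma^p$-length $n$, I would use the identity
\[
\frac{d(o, go) - \ell' n}{\sqrt n} = \sqrt p \cdot \frac{d(o, go) - (\ell'/p)(pn)}{\sqrt{pn}}
\]
and set $\ell := \ell'/p$ and $\sigma := \sigma'/\sqrt p$; since dividing a random variable by $\sqrt p$ turns $\mc N_{\sigma'}$ into $\mc N_{\sigma}$, the convergence becomes
\[
\lambda_{pn}^{(i)}\left(g : \frac{d(o, go) - \ell\Vert g\Vert}{\sqrt{\Vert g\Vert}} \leq x\right) \longrightarrow \int_{-\infty}^x d\mc N_\sigma(t),
\]
where passing from the interval form of the CLT to this half-line form uses only that $\mc N_\sigma$ has no atoms. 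The one point requiring any care — and essentially the only one — is this bookkeeping of the period-$p$ rescaling between $\Gamma$- and $\Gamma^p$-lengths; everything else is formal, which is why the deduction is \emph{immediate}.
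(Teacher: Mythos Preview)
Your proposal is correct and follows exactly the paper's approach: the paper proves this corollary by the single sentence ``By applying [Corollary \ref{T:main-ss-every}] to $\Gamma^p$, we immediately obtain,'' and you have spelled out precisely what that immediate application entails, including the verification via Lemma \ref{lem:passing_to_p} that the hypotheses transfer and the length-rescaling bookkeeping $\ell = \ell'/p$, $\sigma = \sigma'/\sqrt p$ that the paper leaves implicit.
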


We are now ready to prove the following. Recall that $S_n$ denotes the set of length $n$ paths beginning at the initial vertex $v_0$.

\begin{theorem}  \label{T:main-almostss}
Let $\Gamma$ be a thick, biautomatic graph structure for a nonelementary group $G$ of isometries of a $\delta$-hyperbolic space $(X, d)$, and let $o \in X$ be a base point.
Then there exists $\ell \geq 0$, $\sigma \geq 0$ such that for any $a < b$ we have
$$\lim_{n \to \infty} \frac{1}{\# S_n} \# \left\{ g \in S_n \ : \   \frac{d(o, g o) -  \ell n}{\sqrt{n}}  \in [a, b] \right\} = \int_a^b d \mc N_\sigma(t).$$
\end{theorem}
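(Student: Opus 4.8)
The plan is to bootstrap from the semisimple case, which is already handled for the $p$-step structure, by conditioning on a short prefix. Since $\Gamma$ is almost semisimple by Lemma~\ref{lem:thick_implies}, fix a period $p$ so that $M^p$ is semisimple and pass to $\Gamma^p$, which is again thick and biautomatic by Lemma~\ref{lem:passing_to_p}. Corollary~\ref{T:CLT-p-count} then provides constants $\ell,\sigma$, \emph{not depending on the starting vertex}, such that $\lambda^{(i)}_{pn}(g:\varphi(g)\le x)\to N_\sigma(x)$ for every vertex $v_i$ of large growth, where $\varphi(g)=(d(o,go)-\ell\Vert g\Vert)/\sqrt{\Vert g\Vert}$, and (as in the excerpt) large growth for $\Gamma$ and for $\Gamma^p$ coincide, being characterized by the existence of a path to a maximal component. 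Recall also that $v_0$ is a vertex of large growth.

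Next, fix a residue $r\in\{0,\dots,p-1\}$; as $r$ takes finitely many values it suffices to prove $\lambda_n(g:\varphi(g)\le x)\to N_\sigma(x)$ along each subsequence $n=pm+r$. Every $g'\in S_n$ factors uniquely as $g'=wg$, where $w$ is the length-$r$ prefix (starting at $v_0$ and ending at a vertex $t(w)$) and $g$ is a length-$pm$ path starting at $t(w)$. The set $B:=\{\overline w:\Vert w\Vert\le p\}$ is a finite subset of $G$ containing $1$, and $\overline{g'}=\overline w\,\overline g$ with $\Vert g'\Vert=pm+r$, $\Vert g\Vert=pm$; hence by the uniform bicontinuity of $\varphi$ (Lemma~\ref{L:uniform}, applied with $b_1=\overline w$ and $b_2=1$), for every $\eta>0$ there is an $N$ with $|\varphi(wg)-\varphi(g)|<\eta$ whenever $pm\ge N$. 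Since the uniform measure on length-$pm$ paths out of $t(w)$ is exactly $\lambda^{(t(w))}_{pm}$, this gives, for $m$ large and every prefix $w$ with $t(w)$ of large growth,
\[
\lambda^{(t(w))}_{pm}\bigl(\varphi\le x-\eta\bigr)\;\le\;R^{(m)}_w(x)\;\le\;\lambda^{(t(w))}_{pm}\bigl(\varphi\le x+\eta\bigr),
\qquad
R^{(m)}_w(x):=\frac{\#\{g:\Vert g\Vert=pm,\ s(g)=t(w),\ \varphi(wg)\le x\}}{e_{t(w)}^{T}(M^p)^m\mathbf 1},
\]
and by Corollary~\ref{T:CLT-p-count} both sides converge, as $m\to\infty$, to values in $[N_\sigma(x-\eta),N_\sigma(x+\eta)]$.

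Finally I would assemble these. Writing $\#S_n=\sum_w e_{t(w)}^{T}(M^p)^m\mathbf 1$ and using that $M^p$ is semisimple, $e_i^{T}(M^p)^m\mathbf 1/\lambda^{pm}$ tends to a constant $\rho_i^{(p)}$ that is positive exactly for large-growth $v_i$; moreover, since $v_0$ has a path to a maximal component of $\Gamma^p$, truncating or extending that path by fewer than $p$ steps exhibits a length-$r$ path from $v_0$ to a large-growth vertex, whence $\#S_n/\lambda^{pm}\to Z_r>0$ with $Z_r=\sum_w\rho_{t(w)}^{(p)}$. Thus $\lambda_n(g:\varphi(g)\le x)=\sum_w c^{(m)}_w R^{(m)}_w(x)$ is a convex combination with weights $c^{(m)}_w=e_{t(w)}^{T}(M^p)^m\mathbf 1/\#S_n$: on small-growth prefixes $c^{(m)}_w\to0$ while $0\le R^{(m)}_w(x)\le 1$, so those terms vanish, and on large-growth prefixes $c^{(m)}_w\to\rho_{t(w)}^{(p)}/Z_r$, with $\sum_{w\text{ large growth}}\rho_{t(w)}^{(p)}/Z_r=1$, while $R^{(m)}_w(x)$ is squeezed as above. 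Taking $\limsup$ and $\liminf$, then letting $\eta\to0$ and using continuity of $N_\sigma$, yields $\lambda_n(g:\varphi(g)\le x)\to N_\sigma(x)$ for all $x$; differencing at $a$ and $b$ gives the statement. The main obstacle is the middle step: taming the perturbation of the displacement by $\overline w$. As displacement is not a quasimorphism, this is not automatic and genuinely needs biautomaticity through Lemma~\ref{L:uniform}; the weight bookkeeping in the last step, by contrast, is routine once $M^p$ is semisimple.
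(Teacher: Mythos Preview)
Your proof is correct and follows essentially the same approach as the paper: reduce to $\Gamma^p$ via Lemma~\ref{lem:passing_to_p}, invoke Corollary~\ref{T:CLT-p-count} for each large-growth starting vertex, condition on the length-$r$ prefix, absorb the prefix via uniform bicontinuity of $\varphi$, and use semisimplicity of $M^p$ to show the small-growth prefixes are asymptotically negligible. The only cosmetic difference is that the paper packages the weight bookkeeping into an auxiliary measure $\lambda'_{pn+r}$ and proves $\Vert \lambda'_{pn+r}-\lambda_{pn+r}\Vert_{TV}\to 0$, whereas you work directly with the convex combination $\sum_w c^{(m)}_w R^{(m)}_w$; the content is the same.
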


\begin{proof}

Let $v_0$ be the initial vertex, let $S_{n}$ be the set of paths of length $n$ based at $v_0$, and let $\lambda_n$ be the uniform measure on $S_n$. 

Let us fix $0 \leq r \leq p-1$. Then we can write the counting measure on $S_{pn+r}$, starting at the initial vertex $v_0$, 
by first picking randomly a path $g_0$ of length $r$ from $v_0$ with a certain probability $\mu$, and then picking 
a random path starting at $v_i = t(g_0)$ with respect to the counting measure on the set of paths of length $n$ starting at $v_i$.

To compute $\mu$, let us consider a path $g_0$ of length $r$ starting at $v_0$ and ending at $v_i$. Then, if $v_i$ is of large growth for $\Gamma^{p}$, 
$$\frac{\#\{ \textup{paths from }v_i\textup{ of length }pn \}}{\#\{ \textup{paths from }v_0\textup{ of length }pn + r\}} = \frac{e_i M^{pn} 1}{e_0 M^{pn+r} 1} \to \frac{e_i M_{\infty} 1}{e_0 M^r M_\infty 1}.$$
Thus, we define 
$$\mu(g_0) :=  \frac{e_i M_{\infty} 1}{e_0 M^r M_\infty 1}.$$
Note that $\mu(g_0) = 0$ if the end vertex of $g_0$ has small growth 
and moreover
$$\sum_{\Vert g_0 \Vert = r} \mu(g_0) = \sum_i \mu(g_0) \# \{ g_0 \in S_r \ :  \ t(g_0) = v_i \} = \sum_i e_0 M^r e_i \frac{e_i M_{\infty} 1}{e_0 M^r M_\infty 1}  = 1.$$

Let $\lambda'_{pn+r}$ be the measure on $S_{pn + r}$ given by first taking randomly a path $g_0$ of length $r$ from $v_0$ 
with distribution $\mu$ and then taking uniformly a path of length $pn$ starting from $t(g_1)$. 

Now we show that the CLT holds for $\lambda'_{pn+r}$. Let $\ell, \sigma$ be given by Theorem \ref{T:CLT-p-count}, and let $\varphi(g) := \frac{d(o, go) - \ell \Vert g\Vert}{\sqrt{ \Vert g \Vert}}$.
By Theorem \ref{T:CLT-p-count}, for any vertex $v_i$ of large growth, we have 
$$\lambda^{(i)}_{pn} ( g :  \ \varphi(g) \leq x ) \to N_{\sigma}(x).$$
Then if $g = g_0 g_1$, and $t(g_0)$ denotes the (index of the) end vertex of $g_0$, 
\begin{align*}
\lambda'_{pn+r} ( g  \ : \ \varphi(g) \leq x ) &= \sum_{g_0 \in S_r} \mu(g_0) \lambda^{(t(g_0))}_{pn} ( g_1 :  \ \varphi(g_0 g_1) \leq x ) \\
&\longrightarrow \sum_{g_0} \mu(g_0) N_{\sigma}(x) = N_\sigma(x),
\end{align*}
where we used that $\varphi$ is uniformly bicontinuous as in the proof of Theorem \ref{T:main-ss}.

Now we prove that
$$\Vert \lambda'_{pn+r} - \lambda_{pn+r} \Vert_{TV} \to 0$$
as $n \to \infty$. 
Indeed, if $\gamma = g_0 g_1$ is a path from $v_0$ of length $pn+r$ and $g_0$ is its prefix of length $r$ ending at a vertex $v_i$ of large growth, then 
$$\frac{ \lambda'_{pn+r}(\gamma) }{\lambda_{pn+r}(\gamma)} = \frac{\mu(g_0) \cdot \frac{1}{e_i M^{pn} 1}}{\frac{1}{e_0 M^{pn+r} 1} } \to 1.$$
On the other hand, if the end vertex of $g_0$ is of small growth, then $\lambda'_{pn+r}(g) = 0$, 
and also 
$$\lambda_{pn+r}( g = g_0 g_1 \ : \  g_0 \textup{ ends at a small growth vertex} ) \to 0$$
as $n \to \infty$.
Now, let $A_x := \{ g  \ : \ \varphi(g) \leq x \}$ and $L_r$ be the set of paths starting at $v_0$ whose prefix of length $r$ ends in a vertex of large growth. Then 
\begin{align*}
\lambda_{pn+r}(g  \ : \ \varphi(g) \leq x ) & = \lambda_{pn+r} (g \in L_r \ : \ \varphi(g) \leq x ) + \lambda_{pn+r} (g \notin L_r \ : \ \varphi(g) \leq x )  \\
& = \frac{\lambda_{pn+r}(A_x \cap L_r) }{\lambda'_{pn+r}(A_x \cap L_r)} \lambda'_{pn+r}(A_x \cap L_r) + \lambda_{pn+r}(A_x \setminus L_r) \\
& \to 1 \cdot N_\sigma(x) + 0 = N_\sigma(x).
\end{align*}
We have thus obtained a CLT for $\lambda_{pn+r}$, for any $0 \leq r \leq p-1$, always with the same $\ell, \sigma$. 
Since there are only finitely many values $r$, the claim follows.
\end{proof}

\subsection{A CLT for translation length}

We now prove a more general version of our second main result, Theorem \ref{T:main} (2). 

\begin{theorem} \label{T:CLT-tau}
Let $\Gamma$ be a thick, biautomatic graph structure for a nonelementary group $G$ of isometries of a $\delta$-hyperbolic space $(X, d)$, 
let $o \in X$ be a base point, and let $\ell,\sigma$ be as in Theorem \ref{T:main-almostss}. 
Then for any $a < b$ we have
$$\lim_{n \to \infty} \frac{1}{\# S_n} \# \left\{ g \in S_n \ : \   \frac{\tau(g) -  \ell n}{\sqrt{n}}  \in [a, b] \right\} =  \int_a^b d \mc N_\sigma(t).$$
\end{theorem}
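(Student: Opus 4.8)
The plan is to deduce the CLT for translation length $\tau(g)$ from the already-established CLT for displacement $d(o,go)$ (Theorem~\ref{T:main-almostss}), by showing that for a typical element $g \in S_n$ the two quantities differ by $o(\sqrt n)$ — in fact by $O(\log n)$ with high probability. Recall the standard fact in $\delta$-hyperbolic geometry that $|\tau(g) - (d(o,go) - 2(g^{-1}o, go)_o)|$ is bounded in terms of $\delta$ only, so it suffices to control the Gromov product $(g^{-1}o, go)_o$ along the counting measure; equivalently, one wants that the geodesic from $o$ to $go$ and the geodesic from $o$ to $g^{-1}o$ fellow-travel for only a bounded-in-probability amount near $o$.

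First I would reduce, exactly as in the proof of Theorem~\ref{T:main-almostss}, to the semisimple case by passing to $\Gamma^p$, and then to a single maximal component: it is enough to prove the statement for each Markov measure $\mu_n^{(i)}$ on a maximal component $C_i$, since $\mathbb P_n = \sum_i c_i \mu_n^{(i)}$ and the drift/variance are uniform across components by Lemmas~\ref{L:unique-drift} and \ref{L:unique-variance}. Working inside a maximal component with its Parry measure, I would use the nonatomicity of the hitting measure on $\partial X$ (as in Lemma~\ref{L:indep}, via \cite[Lemma 4.2]{GTT2}): for the \emph{reversed} Markov chain — i.e. reading the path $g_1\dots g_n$ backwards, which is again a Markov chain on a maximal component of the reversed graph $\overline\Gamma$ — the endpoint $\pi(\xi^-)$ of the backward trajectory converges to a nonatomic boundary measure, and independently the forward endpoint $\pi(\xi^+)$ converges to a nonatomic measure. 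Since these two limiting measures are nonatomic and the forward/backward trajectories are asymptotically independent (the loop-decomposition at $v$ splits the path into an i.i.d.-like product), with high probability $\pi(\xi^+)\neq\pi(\xi^-)$, and more quantitatively $(g^{-1}o, go)_o \le T$ with probability $\ge 1-\epsilon$ for a uniform $T=T(\epsilon)$, uniformly in $n$. Combined with the hyperbolicity estimate above, this gives $|\tau(g) - d(o,go)| \le T'$ on a set of $\mu_n^{(i)}$-measure $\ge 1-\epsilon$.

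With that uniform comparison in hand, the CLT transfers verbatim: for continuous compactly supported $F$, uniform continuity of $F$ plus $|\tau(g)-d(o,go)|\le T'$ on a $(1-\epsilon)$-set gives
\[
\left| \int F\!\left(\tfrac{\tau(g)-\ell n}{\sqrt n}\right) d\mu_n^{(i)}(g) - \int F\!\left(\tfrac{d(o,go)-\ell n}{\sqrt n}\right) d\mu_n^{(i)}(g) \right| \le \eta + 2\epsilon\|F\|_\infty
\]
for $n$ large, and letting $\epsilon,\eta\to 0$ together with Theorem~\ref{T:main-almostss} yields convergence to $\int F\, d\mc N_\sigma$. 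Passing from $\mu_n^{(i)}$ back to $\mathbb P_n$ (convex combination), then from $\mathbb P_n$ back to the counting measure $\lambda_n$ on $S_n$ via the middle-subpath comparison (Lemma~\ref{L:counting-markov}) and the logarithmic-perturbation estimate (Remark~\ref{rmk:log_change}, noting that chopping off prefixes/suffixes of length $\log n$ changes $\tau$ by at most $O(\log n) = o(\sqrt n)$ — here one should observe $\tau(g)$ is a conjugacy invariant so conjugating by $g_0$ does not change it, and $|\tau(g_0 g_1) - \tau(g_1)|$ is controlled by the bounded cancellation), and finally reassembling the $p$ residue classes $r$, completes the argument exactly along the lines of Theorem~\ref{T:main-almostss}.

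\textbf{The main obstacle} I anticipate is the uniform-in-$n$ bound on $(g^{-1}o, go)_o$: one needs the forward and backward endpoints of the sample path to be separated on the boundary with probability close to $1$, uniformly over $n$ and over the starting vertex. This requires (a) identifying the reversed process on $C_i$ as a genuine Markov chain whose boundary hitting measure is nonatomic, and (b) a quantitative asymptotic-independence statement for the pair (backward endpoint, forward endpoint) — concretely, that conditioning on a long common prefix does not force the two boundary points to collide. I would handle (b) by using the loop structure: cut the path at the first and last visits to the base vertex $v$, so that the "head", "body", and "tail" are independent; the head determines the backward endpoint up to a bounded error and the tail the forward endpoint, and nonatomicity of each marginal hitting measure then gives separation with uniform probability. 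This is morally \cite[Proposition 3.3]{BQ-hyperbolic} adapted to the graph-structure setting, and is the one place where a little care is genuinely needed.
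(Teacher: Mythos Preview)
Your strategy --- use the $\delta$-hyperbolic identity $\tau(g) = d(o,go) - 2(go, g^{-1}o)_o + O(\delta)$ and control the Gromov product --- is exactly the paper's. The paper's execution, however, is a three-line argument: it invokes \cite[Proposition 5.8]{GTT2}, which already gives
\[
\lambda_n\bigl(g : (go, g^{-1}o)_o \le \epsilon\sqrt n\bigr) \to 1
\]
directly for the \emph{counting} measure on $S_n$ (any $\epsilon>0$), whence $|\tau(g) - d(o,go)| \le 2\epsilon\sqrt n + O(\delta)$ on a set of $\lambda_n$-measure tending to $1$, and Theorem~\ref{T:main-almostss} finishes immediately. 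There is no passage through the Markov measures, no forward/backward independence argument, and no reassembly --- all of that is already packaged inside the cited proposition from the earlier paper in the series. Note also that only the weak bound $\epsilon\sqrt n$ is needed, not your boundedness-in-probability.

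Your detour through the Markov measures is not wrong, but it creates a complication you gloss over. In the reassembly step you assert $|\tau(g_0 g_1 g_2) - \tau(g_1)| = O(\log n)$ via ``conjugacy invariance and bounded cancellation''. Conjugacy invariance reduces this to $|\tau(g_1 h) - \tau(g_1)|$ with $h = \overline{g_2}\,\overline{g_0}$ of $X$-displacement $O(\log n)$, but $|\tau(g_1 h) - \tau(g_1)|$ is \emph{not} in general controlled by $d(o, ho)$ alone (take $g_1 = xyx^{-1}y^{-1}$ and $h = y$ in a free group: the difference is $3$ while $d(o,ho)=1$), and ``bounded cancellation'' is a word-metric phenomenon, not a translation-length one. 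What actually saves the step is that your Gromov-product bound $(g_1 o, g_1^{-1}o)_o \le T$ w.h.p., combined with the Lipschitz estimate $|(g_1 h\, o, h^{-1}g_1^{-1}o)_o - (g_1 o, g_1^{-1} o)_o| \le 2 d(o, ho)$, yields $|\tau(g_1 h) - \tau(g_1)| \le O(d(o,ho)) + O(T) + O(\delta) = O(\log n)$ on a high-probability set. So your outline is salvageable, but the stated justification is not the right one; since the Gromov-product estimate is already available for $\lambda_n$ from \cite{GTT2}, the clean fix is simply to invoke it directly, as the paper does, and skip the Markov detour entirely.
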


\begin{proof}
Let us recall that the translation length of an isometry $g$ of a $\delta$-hyperbolic space can be computed by (see e.g. \cite[Proposition 5.8]{MT})
\begin{equation} \label{E:tau}
\tau(g) = d(o, go) - 2(go, g^{-1}o)_o + O(\delta)
\end{equation}
where $O(\delta)$ is a constant which only depends on the hyperbolicity constant of $X$. 
Now, by choosing $f(n) = \epsilon \sqrt{n}$ in  \cite[Proposition 5.8]{GTT2}, for any $\epsilon$ we have 
$$\lambda_n(g \ : \ (go, g^{-1} o)_o \leq \epsilon \sqrt{n} ) \to 1$$
as $n \to \infty$. The claim then follows by combining this statement and the statement of Theorem \ref{T:main-almostss} into formula \eqref{E:tau}.
\end{proof}

\subsection{Zero variance}

We finally complete our main theorem by characterizing the case where $\sigma = 0$. First, we give a general criterion.

\begin{proposition}\label{P:zero-sigma_gen}
In the hypotheses of Theorem \ref{T:main-almostss} we have $\sigma =0$ if and only if there is $C \ge0$ such that for all finite length paths $g$ in $\Gamma$,
\[
|d(o, go) - \ell  \Vert g \Vert |  \le C.
\]
\end{proposition}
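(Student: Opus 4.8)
The plan is to prove the two implications separately, leveraging the CLT machinery already built.

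\textbf{Easy direction.} Suppose there is $C \ge 0$ with $|d(o,go) - \ell \Vert g \Vert| \le C$ for all finite paths $g$ in $\Gamma$. Then for $g \in S_n$ the quantity $\frac{d(o,go) - \ell n}{\sqrt n}$ has absolute value at most $C/\sqrt n \to 0$, so the random variables in Theorem \ref{T:main-almostss} converge to $0$ in probability. Since they also converge in distribution to $\mc N_\sigma$, we must have $\sigma = 0$ (the Gaussian $\mc N_\sigma$ is the Dirac mass at $0$ iff $\sigma = 0$).

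\textbf{Hard direction.} Suppose $\sigma = 0$; we must produce the uniform bound $C$. The strategy is to trace back through the construction of $\sigma$. Recall $\sigma^2 = \sigma_1^2 / R$ where $\sigma_1$ is the variance of the CLT for the loop semigroup (Corollary \ref{C:CLT-loop}, Proposition \ref{P:MT2}), so $\sigma = 0$ forces $\sigma_1 = 0$. Now $\sigma_1$ comes from Theorem \ref{T:sigma-CLT} applied to the centerable cocycle $\eta(g,\xi) = \beta_\xi(o, g^{-1}o) - \ell N(g)$ on $F_v \times \overline X^h_\infty$ with the measure $\check\mu_v$. The key input is the degeneracy criterion from Benoist--Quint \cite{BQ-hyperbolic} / Horbez \cite{Horbez}: the variance in the cocycle CLT vanishes iff the centered cocycle $\sigma_0$ is a coboundary, equivalently iff $\eta$ is cohomologous to a constant; concretely $\sigma_1 = 0$ implies there is a bounded measurable $\psi$ and a constant (which must be $0$ after recentering, since the drift of $\eta$ is $0$) such that $\eta(g,\xi) = \psi(\xi) - \psi(g\xi)$ for $\check\mu_v$-a.e.\ $\xi$ and all $g$ in the support. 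This means $|\beta_\xi(o, g^{-1}o) - \ell \Vert g \Vert| \le 2\Vert \psi \Vert_\infty$ for all loops $g$ in the loop semigroup $\Gamma_v^{-1}$ (equivalently $\Gamma_v$), uniformly. Then, using Lemma \ref{L:indep} (the difference $|d(o,go) - \beta_\xi(o,go)|$ is uniformly bounded with high probability, hence — after choosing $\xi$ in a suitable full-measure set and using nonatomicity — one gets that there is a single $\xi$ for which the bound holds along enough paths), one upgrades this to a bound $|d(o,go) - \ell\Vert g \Vert| \le C_0$ for all $g \in \Gamma_v$. Finally, thickness gives $G = B \cdot \overline{\Gamma}_v \cdot B$, and biautomaticity (Lemma \ref{L:norm-control}, and the triangle inequality $|d(o,go) - d(o,ho)| \le \mathcal B_1$ when $\overline g = b_1 \overline h b_2$) lets one transfer the bound from loops at $v$ to all finite paths $g$ in $\Gamma$, at the cost of enlarging the constant. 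One must also pass through $\Gamma^p$ versus $\Gamma$ (the whole discussion of $\sigma$ happens at the primitive/semisimple level after replacing $\Gamma$ by $\Gamma^p$), but since $|\Vert g\Vert - \Vert g_\dagger\Vert|$ issues are controlled by Lemma \ref{lem:passing_to_p} this is harmless.

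\textbf{Main obstacle.} The delicate step is extracting the uniform (in $g$, deterministic) bound $|\beta_\xi(o,g^{-1}o) - \ell\Vert g\Vert| \le \mathrm{const}$ for \emph{all} loops from the a.e.\ coboundary statement, and then replacing $\beta_\xi$ by $d(o, \cdot)$ to get an honest metric estimate valid for \emph{every} path rather than merely on a set of large measure. This requires combining the coboundary structure (which controls the cocycle pointwise once $\psi$ is bounded) with a covering/nonatomicity argument using Lemma \ref{L:indep} to pin down a single good horofunction $\xi$, or alternatively appealing directly to the sharp form of the Benoist--Quint variance-degeneracy statement which already outputs a uniform bound on $\sigma_0 - \lambda$. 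I would organize the proof around citing that sharp degeneracy criterion, then spend the bulk of the effort on the thickness-plus-biautomaticity transfer from $\overline\Gamma_v$ to all of $\Gamma$, which is the genuinely new bookkeeping here.
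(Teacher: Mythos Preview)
Your approach is essentially the paper's: trace $\sigma=0$ back through the suspension (Theorem \ref{T:suspend}) to $\sigma_1=0$ on the loop semigroup, invoke the Benoist--Quint degeneracy criterion to get $\eta_0\equiv 0$ and hence $|\eta(g,\xi)|\le 2\Vert\psi\Vert_\infty$ for all $g\in\Gamma_v^{-1}$, then use thickness together with Lemma \ref{L:norm-control} and the triangle inequality to transfer the bound to every finite path. One clarification: the paper does \emph{not} use Lemma \ref{L:indep} to pass from the Busemann bound to a displacement bound; your sketch of that route (extracting a single $\xi$ good for \emph{all} $g$ from a high-probability statement via nonatomicity) is not quite right as written, since Lemma \ref{L:indep} only controls a set of paths of large measure for each fixed $\xi$. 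Instead the paper cites \cite[Corollary 2.3]{Horbez} to go directly from $|\beta_\xi(o,g^{-1}o)-\ell\Vert g\Vert|\le 2\Vert\psi\Vert_\infty$ to $|d(o,go)-\ell\Vert g\Vert|\le C$ on the support of $\check\mu_v^{*n}$---this is exactly the ``sharp degeneracy criterion'' you name as your preferred alternative, so your final plan lands on the correct argument.
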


We note that the proposition implies that $\sigma >0$ whenever the action $G \curvearrowright X$ is nonproper.

\begin{proof}
Suppose that $\sigma = 0$ for the CLT for the counting measure. Then by our previous discussion, we have 
$\sigma = 0$ also for the Markov chain on any maximal components. 
Then by Theorem \ref{T:suspend}, we also have $\sigma = 0$ for the random walk on the loop semigroup driven by $\check{\mu}_v$. 
Hence, as in \cite[Proof of Theorem 4.7 (b)]{BQ-hyperbolic}, for any $n$
$$\frac{1}{n} \int (\eta_0(g, \xi) )^2  d \check{\mu}_v^{*n}(g) d \nu_v(\xi) =  0$$
where $\eta_0$ is the centering of $\eta$. This implies
$$\eta_0(g, \xi) = 0$$
for any $g \in \Gamma_v^{-1}$ and $\nu_v$-a.e. $\xi \in \overline X^h$. 
Thus, since $|\eta - \eta_0| \leq 2 \Vert \psi \Vert_\infty$ is bounded, we have 
$$ |\beta_\xi(o, g^{-1} o) - \ell \Vert g \Vert | = |\eta(g, \xi)| \leq 2 \Vert \psi \Vert_\infty$$
hence by \cite[Corollary 2.3]{Horbez} there exists a constant $C$ for which
$$|d(o, go) - \ell \Vert g \Vert | \leq C$$
for any $g$ in the support of $\check{\mu}_v^{*n}(g)$.

Hence, by thickness we have for any $g \in \Omega^*$ there exists $b_1, b_2 \in B$ and $h \in \Gamma_v^{-1}$ such that 
$$\overline{h} = b_1 \overline{g} b_2,$$ 
thus by Lemma \ref{L:norm-control} and the triangle inequality
$$|d(o, go) - \ell \Vert g \Vert | \leq |d(o, ho) - \ell \Vert h \Vert| + \mc B_1 + \ell \mc B$$
thus there exists a constant $C'$ such that 
$$|d(o, go) - \ell \Vert g \Vert | \leq C'$$
for any $g \in \Omega^*$. This completes the proof.
\end{proof}

We conclude with a corollary that applies when the graph structure is geodesic. For the action $G \curvearrowright X$, denote the translation length of $h$ by $\tau_X(h)$. We use the notation $\tau_G(h)$ to denote the translation length of $h$ with respect to the word metric $d_G$ induced by the graph structure $\Gamma$:
\[
\tau_G(h) = \lim_{n\to \infty}\frac{1}{n}d_G(1, h^n).
\]

\begin{corollary} \label{C:zero-sigma}
Suppose that $\Gamma$ is a thick bicombing of $G$.
If $\sigma = 0$ in the CLT, then for all $h \in G$
$$\tau_X(h) = \ell \: \tau_G(h),$$
where $\ell$ is the corresponding drift.
\end{corollary}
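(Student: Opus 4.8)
The plan is to combine the geodesic-combing structure with the explicit constant supplied by Proposition \ref{P:zero-sigma_gen}. Recall first that a thick bicombing is in particular a geodesic combing associated to a generating set $S$, so evaluation restricts to a bijection from the set of finite paths starting at $v_0$ onto $G$, and for the unique path $g_w$ with $\overline{g_w} = w$ one has $\Vert g_w \Vert = \Vert w \Vert_S = d_G(1, w)$, since $d_G$ is by definition the word metric induced by the edge labels of $\Gamma$.

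Next I would invoke Proposition \ref{P:zero-sigma_gen}: since $\sigma = 0$, there is a constant $C \ge 0$ with $|d(o, g o) - \ell \Vert g \Vert| \le C$ for every finite length path $g$ in $\Gamma$. Specializing to the paths $g_w$ based at $v_0$ and rewriting in terms of group elements using the previous paragraph, this becomes
\[
|d(o, w o) - \ell\, d_G(1, w)| \le C \qquad \text{for every } w \in G.
\]

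Finally, apply this bound along the cyclic subgroup generated by a fixed $h \in G$: for each $n \ge 1$ we get $|d(o, h^n o) - \ell\, d_G(1, h^n)| \le C$. Dividing by $n$ and letting $n \to \infty$, the term $\frac{1}{n} d(o, h^n o)$ converges to $\tau_X(h)$ (the translation length being the stable limit, which exists by subadditivity), the term $\frac{1}{n} d_G(1, h^n)$ converges to $\tau_G(h)$, and the error $C/n$ vanishes; hence $\tau_X(h) = \ell\, \tau_G(h)$. There is no real obstacle here: the only point to watch is that Proposition \ref{P:zero-sigma_gen} is phrased for all finite paths of $\Gamma$, while we use only those issuing from $v_0$, and it is precisely the geodesic-combing hypothesis on those paths that converts path length into word length and thereby lets the displacement estimate be promoted to the statement about translation lengths.
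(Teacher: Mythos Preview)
Your proof is correct and follows essentially the same route as the paper: invoke Proposition~\ref{P:zero-sigma_gen}, use the geodesic-combing hypothesis to identify $\Vert g_n \Vert$ with $d_G(1,h^n)$ for the path representing $h^n$, then divide by $n$ and pass to the limit. The paper's version is simply more terse.
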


\begin{proof}
Let $g_n$ be a path in $\Gamma$ representing $h^n$ for $h\in G$. 
That is $h^n = \overline g_n$.
Since the structure is geodesic, $||g_n|| = d_G(1,h^n)$. Applying Proposition \ref{P:zero-sigma_gen}, we get that 
\[
|d(o, h^no) - \ell d_G(1,h^n)|  = O(1).
\]
The corollary follows after dividing by $n$ and taking a limit.
\end{proof}

\section{Applications}\label{sec:Apps}
The main theorem of Section \ref{sec:intro} now follows easily from the results in Section \ref{sec:almost_semi}.

\begin{proof}[Proof of Theorem \ref{T:main}]
Since $G$ has a thick bicombing with respect to $S$,  the length $\Vert g \Vert$ of a path in the graph equals the word length with respect to $S$ of its evaluation $\overline{g} \in G$, and the sphere of radius $n$ in the Cayley graph of $G$ is in bijection with the set of paths of length $n$ in the graph. Then (1) follows immediately from Theorem \ref{T:main-almostss}, (2) follows from \ref{T:CLT-tau} and (3) from Corollary \ref{C:zero-sigma}.
\end{proof}

We now give proofs of the applications in the introduction. We first recall some examples of groups 
which admit thick bicombings; for further details, see also \cite{GTT2}.
 
\begin{lemma} \label{L:bicombing}
The following groups admit thick bicomings:
\begin{enumerate}
\item
A (word) hyperbolic group $G$ admits a thick bicombing with respect to any generating set. 
\item
If $G$ is relatively hyperbolic with virtually abelian peripheral subgroups, then every finite generating set $S'$ can be extended to a 
finite generating set $S$ for $G$ which admits a thick bicombing. 
\item
If $G$ is a right-angled Artin group or right-angled Coxeter group that does not decompose as a product and $S$ is the vertex generating set, then $G$ admits a 
thick combing for $S$ whose graph structure has only one maximal component, which is aperiodic. 
\end{enumerate}
\end{lemma}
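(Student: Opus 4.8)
The plan is to handle the three cases in a common way: in each one I would exhibit a concrete geodesic combing $\Gamma$ for the relevant generating set $S$, check the fellow‑traveler condition of Definition \ref{D:fellow-travel} (biautomaticity), and then verify thickness, $G = B\cdot\overline{\Gamma}_v\cdot B$, for every vertex $v$ in a maximal component. In all three cases the combing will be the finite automaton of geodesic (ShortLex) normal forms, which is geodesic essentially by construction, so the real content is biautomaticity and thickness. The routine part is producing the combings and the fellow‑traveler estimates; the crux in every case is thickness, i.e. a bounded‑correction argument that simultaneously routes a group element onto a loop at the prescribed recurrent vertex and keeps the inserted word geodesic, together with — in the RAAG/RACG case — the verification that the not‑a‑product hypothesis forces a single aperiodic maximal component. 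These are precisely the points where one invokes \cite{GTT2}.

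\emph{Case (1): hyperbolic groups.} Let $S$ be any finite symmetric generating set of the hyperbolic group $G$ and let $\Gamma$ accept the ShortLex‑least geodesic word of each $g\in G$; this language is regular (hyperbolic groups are ShortLex automatic), it is prefix‑closed, and evaluation gives a bijection from paths based at $v_0$ to $G$, so $\Gamma$ is a geodesic combing for $S$. Here biautomaticity is automatic: since $\Gamma$ is geodesic, if $\overline g = b_1\overline h b_2$ then the geodesic of the Cayley graph spelled by $g$ and the $b_1$‑translate of the one spelled by $h$ have endpoints a bounded distance apart, hence fellow‑travel by $\delta$‑thinness of triangles, giving $d_G(\overline{g(i)},b_1\overline{h(i)})\le C(B,\delta)$. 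For thickness, fix $v$ in a maximal component $C$; since $C$ is maximal it carries exponentially many loops at $v$, and the structure theory of geodesic automata of hyperbolic groups in \cite{GTT2} provides a constant $D$ so that every $g\in G$ can be routed, after left‑ and right‑multiplication by elements of bounded length — used both to land at $v$ and to ``buffer'' the two ends so that the inserted word stays geodesic — into a loop at $v$, yielding $g=b_1\overline\ell b_2$ with $\ell\in\Gamma_v$ and $b_1,b_2$ in a finite set.

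\emph{Case (2): relatively hyperbolic with virtually abelian peripherals.} First I would enlarge $S'$: since each peripheral subgroup is virtually abelian it has a regular language of geodesics, and by adjoining finitely many elements one obtains $S\supseteq S'$ for which the cosets of the peripherals carry geodesic combings compatible with $d_G$ (this is the standard input on generating sets of relatively hyperbolic groups used in \cite{GTT2}). One then splices geodesics in the coned‑off Cayley graph, which is hyperbolic, with peripheral geodesics to obtain a regular, geodesic combing $\Gamma$ for $(G,S)$. Biautomaticity follows from a fellow‑traveler estimate in the relatively hyperbolic setting: away from peripheral excursions one argues as in Case (1), and inside a peripheral coset one uses that the peripheral combing fellow‑travels, the transitions between the two regimes being controlled by the choice of $S$. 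Thickness again follows by a bounded correction routing any element through a fixed recurrent vertex of the ``hyperbolic part'' of $\Gamma$, as carried out in \cite{GTT2}; the bookkeeping at the peripheral/hyperbolic transitions, together with the choice of $S$, is the main technical point here.

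\emph{Case (3): RAAGs and RACGs.} Let $S=V$ be the vertex generators and let $\Gamma$ be the standard geodesic normal‑form automaton; RAAGs and RACGs are biautomatic, so $\Gamma$ is a finite geodesic combing satisfying Definition \ref{D:fellow-travel} (alternatively one argues directly with disc diagrams in the associated $\mathrm{CAT}(0)$ cube complex). The new point is that $\Gamma$ has a single maximal component which is aperiodic: because $G$ is not a direct product, its defining graph is not a join, so any two vertex generators can be linked through a normal form built from a non‑commuting pair; a direct analysis of the normal‑form automaton then shows that all vertices of maximal growth lie in one strongly connected component, and the presence of short loops there forces period $1$. Thickness follows as before: in the cube complex one prepends and appends bounded ``generic'' blocks to the normal form of any $g$ so as to route it through a fixed recurrent vertex without altering $g$ beyond a bounded left/right multiplication. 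I expect pinning down exactly how ``not a join'' yields both the single component and aperiodicity — and separating this from the thickness argument — to be the most delicate part, with the rest following the pattern of \cite{GTT2}.
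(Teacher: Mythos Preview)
Your overall plan is correct and parallels the paper's, but the paper's proof is essentially a list of citations and uses somewhat different off-the-shelf inputs from the ones you sketch. For (1), the paper invokes Cannon \cite{Cannon} for the bicombing and \cite[Lemma 4.6]{GMM} (not \cite{GTT2}) for thickness; your direct $\delta$-thinness and routing arguments are fine, but the paper does not spell them out. The real divergence is in (2): rather than hand-building a combing by splicing coned-off and peripheral geodesics, the paper takes the geodesic graph structure for an enlarged $S\supseteq S'$ from Antol\'in--Ciobanu \cite[Corollary 1.9]{AC} and upgrades it to a bicombing via \cite[Theorem 5.2.7]{HRR}; and for thickness, instead of a direct bounded-correction routing, the paper uses Yang's growth quasitightness for relatively hyperbolic groups \cite{Yan} together with the implication ``growth quasitightness $\Rightarrow$ thickness'' from \cite[Proposition 7.2]{GTT2}. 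Your routing sketch for thickness in (2) is not what \cite{GTT2} actually provides in this setting, so if you insist on that route you would need to supply the argument yourself rather than cite it; the quasitightness route is cleaner. For (3), the paper simply cites \cite[Corollary 10.4]{GTT2} (building on Hermiller--Meier \cite{HM}), which already packages the single-maximal-component and aperiodicity analysis you outline from the not-a-join hypothesis.
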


\begin{proof}
(1) By \cite{Cannon}, a hyperbolic $G$ has a bicombing with respect to any generating set. Such a structure is thick by 
\cite[Lemma 4.6]{GMM}.

(2) 
By \cite[Corollary 1.9]{AC}, the generating set $S'$ of $G$ can be enlarged to a generating set $S$, so that the pair $(G,S)$ admits a 
geodesic graph structure. By \cite[Theorem 5.2.7]{HRR}, this can be turned into a bicombing for the same generating set $S$.

Hence, it remains to show that this bicombing is thick. Yang \cite{Yan} proves that any relatively hyperbolic group has the \emph{growth quasitightness} property (see \cite[Definition 1.2]{GTT2}, inspired by \cite{AL}) 
with respect to any finite generating set. Since growth quasitightness implies thickness by \cite[Proposition 7.2]{GTT2}, the proof is complete.

(3) In \cite[Corollary 10.4]{GTT2}, building on Hermiller--Meier \cite{HM}, we proved that the language of lexicographically first geodesics in the vertex generators is parameterized by a thick graph structure. In fact, the graph structure we construct has only one maximal component, which is aperiodic. 
\end{proof}

\begin{proof}[Proof of Theorem \ref{th:length}]
Note that $\pi_1(M)$ is hyperbolic relative to its parabolic subgroups, which are virtually abelian since $M$ has constant curvature. Hence, by Lemma \ref{L:bicombing} (2) 
the given generating set $S'$ can be enlarged to a finite generating set $S$ that is associated to a thick bicombing on $\pi_1(M)$. 
The theorem then follows from Theorem \ref{T:main}. Finally, $\sigma > 0$ by (3) since the length spectrum is not arithmetic (\cite{GR}, \cite{Kim}). 
\end{proof}

\begin{proof}[Proof of Theorem \ref{th:3d}]
In the case where $M$ has no rank $2$ cusps, we have that $\pi_1(M)$ is hyperbolic. Indeed, by the Tameness Theorem (\cite{CG}, \cite{agol-tame}), $M$ is the interior of a compact manifold $\overline M$, which by assumption does not have tori as boundary components. Then Thurston's Hyperbolization Theorem (see \cite{kapovich-book}), $\overline M$ admits a convex cocompact hyperbolic structure on its interior. Hence, $\pi_1(M)$ is hyperbolic. The result now follows from Lemma \ref{L:bicombing} and Theorem \ref{T:main}.

For the moreover statement, the argument above gives that $\overline M$ admits a geometrically finite hyperbolic structure. Hence, $\pi_1(M)$ is hyperbolic relative to its rank $2$ parabolic subgroups, which are virtually $\Z \times \Z$. The proof then proceeds as in Theorem \ref{th:length}
\end{proof}

\begin{proof}[Proof of Theorem \ref{th:intersection}]
First, since $\pi_1(M)$ is word hyperbolic, by Lemma \ref{L:bicombing} (1) it has a thick bicombing with respect to any generating set.

Second, let $T =T_\Sigma$ be the dual tree associated to $\Sigma \subset M$. For details of this standard construction and the properties we 
need, see \cite[Section 1.4]{Shalen}. Alternatively, $T$ is the Bass--Serre tree associated to the splitting of $\pi_1(M)$ induced by $\Sigma$. Since $\Sigma$ is not fiber-like, $T$ is not the real line, and since the quotient $\mc G$ of the action $\pi_1(M) \curvearrowright T$ is compact (it is the underlying graph of the associated graph-of-groups), the action is nonelementary. 

Finally, the intersection number $i(\gamma, \Sigma)$ equals the translation length of $\gamma$ with respect to the action $\pi_1(M) \curvearrowright T$. To see this, note that the translation length of $\gamma$ for this action is equal to the number of edges $\#_e\gamma$ crossed by the shortest representative of $\gamma$ in $\mc G$. If we embed $\mc G$ in $M$ dual to $\Sigma$, this shows that $i(\gamma, \Sigma) \le \#_e\gamma$. For the opposite inequality, recall that there is a retraction $r \colon M \to \mc G$ mapping each component of $\Sigma$ to the midpoint of some edge. Thus by taking a representative of $\gamma$ intersecting $\Sigma$ minimally, considering its image under the retraction, and homotoping it off edges that it does not fully cross, we obtain that $\#_e\gamma \le i(\gamma, \Sigma).$ Hence, $i(\gamma,\Sigma) = \ell(\gamma)$ for the action on $T$. 

We now obtain the CLT by applying Theorem \ref{T:main} to this action. If $\sigma = 0$, then Theorem \ref{T:main} (3) implies that the action $\pi_1(M) \curvearrowright T$ is proper. However, this is impossible since only virtually free groups admit proper actions on trees. 
\end{proof}

For the following application, let us assume $G$ is a hyperbolic group, let $\partial G$ be its Gromov boundary, 
and let $d$ be a metric on $G$.  
We define the \emph{growth rate} of the metric $d$ as 
$$v := \limsup_{n \to \infty} \frac{1}{n} \log \# \big \{g \in G \ : \ d(1, g) \leq n \big \}$$
and for each $s \geq v$ let us consider the measure on $G \cup \partial G$:
$$\nu_s := \frac{\sum_{g \in G} e^{-s d(1, g)} \delta_{g}}{ \sum_{g \in G} e^{-s d(1, g)}}.$$
Then any limit point of $(\nu_s)$ as $s \to v$ is supported on $\partial G$ and 
is called a \emph{Patterson--Sullivan (PS) measure}. By Coornaert \cite{Coornaert}, any two limit measures are absolutely continuous with respect to each other, with bounded Radon--Nikodym derivative, so the Patterson-Sullivan \emph{measure class} is well-defined.

\begin{proof}[Proof of Theorem \ref{th:homo-PS}]
Since $G$ is word hyperbolic, it has a thick bicombing by Lemma \ref{L:bicombing} (1). 
The first statement then follows immediately from Theorem \ref{T:main}, by considering the action of $G$ on the Cayley graph of $G'$.

For the moreover statement, if $\sigma=0$, Theorem \ref{T:main} (3) implies that 
\[
|\Vert \phi(g) \Vert_{S'}-  \ell \Vert g \Vert_{S}|
\]
is bounded independently of $g\in G$, hence $\phi$ has finite kernel. 

Now, consider the factorization $G \overset{\pi}{\rightarrow} \overline G : = \frac{G}{\ker \phi}  \overset{\overline{\phi}}{\rightarrow} G'$, 
and define $\overline{S} := \pi(S)$. Then the Cayley graph of $\overline G$ carries the two metrics 
$$d_1(g, h) := \Vert h^{-1} g \Vert_{\overline{S}} \qquad d_2(g, h) := \Vert \overline{\phi}(h^{-1}g) \Vert_{S'}$$ 
and they satisfy 
\begin{equation} \label{E:same-class}
|d_1(g, h) - \ell d_2(g, h) | \leq C
\end{equation}
for any $g, h \in \overline{G}$. 
Now, by \cite[Theorem 2]{Furman}, eq. \eqref{E:same-class} holds if and only if the Bowen--Margulis measures on the double boundary $\partial \overline G \times \partial \overline G$  associated to $d_1$, $d_2$ are the same, which by \cite[Proposition 1]{Furman} holds if and only if  the Patterson--Sullivan measure classes 
for $d_1$, $d_2$ on $\partial \overline{G}$ are the same. 
Finally, if $\phi$ has finite kernel, there exist $C > 0$ for which 
$$|d_1(\pi(g), \pi(h)) - d_S(g, h) | \leq C$$
for any $g, h \in G$. Hence, the PS measure class for $(G, d_S)$ on $\partial G$ pushes forward to the PS measure class for $(\phi(G), d_{S'})$ if and only if $\sigma = 0$. 
\end{proof}

\begin{proof}[Proof of Theorem \ref{th:raag}]
By Lemma \ref{L:bicombing} (3), a right-angled Artin or Coxeter groups has a graph structure with respect to the vertex generating set, 
which is semisimple with only one maximal component. Hence, the CLT follows from Theorem \ref{T:main-ss} (see Remark \ref{R:one-comp}).
To complete the proof, we note that $\#_v(g)$ is equal to the displacement of $g$ with respect to the action of $G$ on the Bass--Serre tree for the hyperplane associated to $v$. The details are similar to those of Theorem \ref{th:intersection}.
\end{proof}

\end{document}